\newcommand\Vor{\operatorname{Vor}}
\newcommand\inte{\operatorname{int}}
\newcommand\Conv{\operatorname{Conv}}
\newcommand{\epi}{\operatorname{epi}} 
\newcommand\msquare{\scalebox{0.6}{$\square$}}
\newcommand{\diam}{\operatorname{diam}}
\newcommand{\Z}{\mathbb{Z}}
\newcommand{\R}{\mathbb{R}}
\newcommand{\tir}[1]{\ensuremath{\overline {#1}}}
\def\whsq{\vbox to 5.8pt 
{\offinterlineskip\hrule 
\hbox to 5.8pt{\vrule height 
5.1pt\hss\vrule height 5.1pt}\hrule}} 
\def\Qed{{\hfill {\whsq}}} 
\def\<{\langle} 
\def\>{\rangle} 
\def\PP{{\mathop{{\rm I}\kern-.2em{\rm P}}\nolimits}} 
\def\FF{{\mathop{{\rm I}\kern-.2em{\rm F}}\nolimits}}   
\def\ZZ{{\mathop{{\rm I}\kern-.2em{\rm Z}}\nolimits}}
\begin{document}

\title{The second boundary value problem for a discrete Monge-Amp\`ere equation
}

\titlerunning{ The second boundary value problem for a discrete Monge-Amp\`ere equation}        

\author{Gerard Awanou}


\institute{  Gerard Awanou \at
              Department of Mathematics, Statistics, and Computer Science (M/C 249), University of Illinois at Chicago,
Chicago, IL, 60607-7045\\              
              Tel.: +1-312-413-2167\\
              Fax: +1-312-996-1491 \\
              \email{awanou@uic.edu} \\
              ORCID https://orcid.org/0000-0001-9408-3078                  
              }

\date{Received: date / Accepted: date}

\maketitle

\begin{abstract}

In this work we propose a discretization of the second boundary condition for the Monge-Amp\`ere equation arising in geometric optics and optimal transport. The discretization we propose is the natural generalization of the popular Oliker-Prussner method proposed in 1988. For the discretization of the differential operator, we use a discrete analogue of the subdifferential. Existence, unicity and stability of the solutions to the discrete problem are established. Convergence results to the continuous problem are given.

\keywords{ discrete convex functions, Monge-Amp\`ere, second boundary value problem  }
 \subclass{65N06 \and 52B55  }
\end{abstract}

\section{Introduction}
In this paper we propose a 
discretization of the second boundary condition for the Monge-Amp\`ere equation. 
Let $\Omega$ and $\Omega^*$ be bounded convex domains of $\R^d$. Let $f$ 
be a non negative integrable function on $\Omega$ and $R>0$ an 
integrable function on $\Omega^*$. We are interested in discrete approximations of convex weak solutions in the sense of Aleksandrov of the model problem
\begin{align} \label{m1}
\begin{split}
R(D u(x) ) \det D^2 u(x) &= f(x) \text{ in } \tir{\Omega}\\
\chi_u (\tir{\Omega}) & = \tir{\Omega^*},
\end{split}
\end{align}
where 
the unknown is a convex function $u$ on $\Omega$ such that $\partial u(\Omega)=\Omega^*$, 
$D u$ denotes the gradient of $u$ and $D^2 u$ its Hessian. We use the notation $\partial u$ for the local subdifferential of $u$ and $\chi_u$ denotes the subdifferential of a specific convex extension $\tir{u}$ to $\R^d$ of $u$, c.f. section \ref{subdifferential}. That convex extension satisfies $\chi_u (\R^d) = \chi_u (\tir{\Omega})  = \tir{\Omega^*}$.
The epigraph of $\tir{u}$, c.f. section \ref{epigraph}, is an unbounded convex set for which there is a notion of asymptotic cone, c.f. section \ref{asym}. The asymptotic cone essentially gives the behavior at infinity of the convex extension $\tir{u}$. From $\Omega^*$, we construct a convex set $K_{\Omega^*}$ which turns out to be the asymptotic cone of the epigraph of the extension $\tir{u}$.  The equation $\chi_{u} (\tir{\Omega})  = \tir{\Omega^*}$ is then equivalent to prescribing the asymptotic cone of  the epigraph of  a certain convex extension to $\R^d$ of the convex function $u$ on $\Omega$. We derive an explicit expression of the extension in terms of the asymptotic cone, which we use to derive the numerical scheme.


We approximate $\tir{\Omega^*}$ by closed convex  polygons $Y \subset \tir{\Omega^*}$ and give an explicit formula for the extension of a mesh function $u_h$ on $\Omega$ which guarantees that the latter has an asymptotic cone $K$ associated with $Y$ with $\chi_{u_h} (\tir{\Omega}) \subset Y$, where $\chi_{u_h}$ denotes some discrete version of the subdifferential.
One then only needs to apply the discrete Monge-Amp\`ere operator in this class of mesh functions, c.f. \eqref{m2d3} below. It was thought \cite[p. 24]{Oliker03} that ''dealing with an asymptotic cone as the boundary condition is inconvenient''.

The left hand side of \eqref{m1} is to be interpreted as the density of a measure $\omega(R,u,.)$ associated to the convex function $u$ and the mapping $R$ c.f. section \ref{R-curvature}. It is defined through the subdifferential of $u$. 
Equations of the type \eqref{m1} appear for example in optimal transport and geometric optics. 
The compatibility condition $\int_{\Omega} f(x) d x = \int_{\Omega^*} R(p) d p$ is required, c.f. section \ref{R-curvature}.

\subsection{Short description of the scheme}

In this paper we consider Cartesian grids and a discrete analogue of the subdifferential considered in  \cite{DiscreteAlex2,Mirebeau15} for the Dirichlet problem. 
Let $h$ be a small parameter, $a+h \mathbb{Z}^d$ for $a \in \R^d$ be the set of mesh points. 
The description of the scheme is given in section \ref{disc-R-curvature}. 
We assume for now that $\Omega=(0,1)^d, a=(1/2, \ldots,1/2)$, $f>0$, $f \in C(\Omega)$ and $\Omega^*$ is a convex polygonal domain 
with vertices $a_j^*, j=1,\ldots,N^*$. Denote by $\Omega_h$ the set of mesh points in $\Omega$ and by $\partial \Omega_h$ the set of mesh points in $\Omega$ closest to $\partial \Omega$ in directions of the canonical basis of $\R^d$.  
The unknown in the discrete scheme is a function defined on $\Omega_h$ which we refer to as a mesh function. 
Given a stencil $V$, i.e. the choice $V(x)$ of a subset of $\mathbb{Z}^d \setminus \{ \, 0 \, \}$ for $x \in \Omega_h$, and an associated  discrete analogue $\partial_V u_h$ of the subdifferential, we define the discrete Monge-Amp\`ere operator by $\omega_V(R,u_h, x)=\int_{\partial_V (u_h) (x)} R(p) dp$  for a mesh point $x \in \Omega_h$. The discretization we analyze consists in solving the nonlinear problem
$$
\omega_V(R,u_h, x) = h^d f(x), x \in \Omega_h,
$$
with unknown mesh values $u_h(x), x \in \Omega_h$. The evaluation of $\omega_V(R,u_h, x)$ requires mesh values $u_h(x), x \notin \Omega_h$. They are given by the discrete extension formula
$$
u_h(x) = \min_{ y \in \partial \Omega_h }  \max_{1 \leq j \leq N} (x-y) \cdot a_j^* +u_h(y),
$$
motivated by Theorem \ref{formula2} below.
The above formula implicitly enforces the second boundary condition as we discuss below. For this example, in the case $f=1$, the simple choice of the right hand side $h^d f(x)$ assures the discrete compatibility condition \eqref{mass-conservation} below. See \eqref{m2d3} below for a suitable right hand side and section \ref{Numerical} for other modifications. 

\subsection{Relation with semi-discrete optimal transport (SDOT)}
A quantization of $f$ is a partition of the domain $\Omega$ into closed cells $E_i, i=1,\ldots, N$ with diameter $\diam(E_i)$ and non empty interiors such that $E_i \cap E_j$ has Lebesgue measure 0 for $i \neq j$, $\cup_{i=1}^N E_i = \tir{\Omega}$. For $x_i$ in the interior of $E_i$ 
and $h=\max \{ \, \diam(E_i), i=1,\ldots, N \, \}$, 
$\mu_h = \sum_{i=1}^N \bigg( \int_{E_i} f(x) dx \bigg) \delta_{x_i}$ weakly converges to the measure with density $f$. Weak convergence of measures is discussed in section \ref{cvg}. In SDOT \cite{Yau2013,levy2018notions,kitagawa2016newton,Aurenhammer98,merigot2011multiscale,Li-Nochetto2021}, one seeks a 
mesh function $u_h$ such that
\begin{equation} \label{sdot}
\int_{\partial_d u_h(x_i) \cap \Omega^*} R(p) dp = \int_{E_i} f(x) dx, i=1, \ldots N,
\end{equation}
 where the discrete subdifferential is defined by
\begin{equation} \label{disc-subd-f}
 \partial_d u_h (x_i) = \{ \, p \in \R^d, u_h(x_j) \geq u_h(x_i) + p \cdot (x_j-x_i), \text{ for all } j=1,\dots,N \, \}.
 \end{equation}
 The computation of $ \partial_d u_h(x_i), i=1,\ldots, N$ is obtained through the construction of a power diagram \cite[Section 5.1]{berman2018convergence}. One then takes the intersection of the diagram with $\Omega^*$. 
 The cells $ \partial_d u_h (x_i), i=1,\dots,N$ are usually interpreted in terms of the Legendre transform of $u_h$, are known as Laguerre cells and form a partition of $\R^d$. 
 Since $\int_{\Omega} f(x) dx = \sum_{i=1}^N  \int_{E_i} f(x) dx = \sum_{i=1}^N \int_{\partial_d u_h(x_i) \cap \Omega^*} R(p) dp = \int_{\Omega^* \cap Y} R(p) dp$ where $Y=\cup_{i=1}^N  \partial_d u_h (x_i)$, we see that in general, the discrete subdifferential is not the usual subdifferential of a piecewise linear convex function. If this were the case, and the second boundary condition $\partial_d u_h (\R^d)
 \subset \Omega^*$ holds, then $Y \subset \Omega^*$. By the compatibility condition we obtain $|Y \cap \Omega^*| = |\Omega^*|$. Since
 $Y \cap \Omega^*  = Y \subset \tir{ \Omega^*}$, we obtain $| \tir{ \Omega^*} \setminus Y| = 0$. This implies that $Y$, which is closed,  
 is dense in $\tir{ \Omega^*}$
 and hence $Y=\tir{ \Omega^*}$. By Lemma \ref{subd-piecewise-c} below, $Y$ would be polygonal and  recall that $\Omega^*$ is not necessarily polygonal. Contradiction.  
 We note that for $x_k$ in the interior of the convex hull of $x_i, i=1,\dots,N$, the discrete subdifferential is equal to the usual subdifferential, c.f. \cite[Lemma 2.1]{Nochetto19}.

The method we have proposed can be seen as a variant where the condition $\partial u_h(\tir{\Omega}) \subset Y
 \subset \tir{\Omega^*}$ is enforced explicitly through a convex extension. Here, $Y$ is a polygonal approximation of $\Omega^*$ and we also denote by $u_h$ the piecewise linear convex function with vertices at the mesh points $x_i, i=1,\ldots N$, c.f section \ref{subdifferential} for a definition. 
Let $x_i, i=N+1,\ldots, M$ be points in $\R^d$ such that $\tir{\Omega}$ is contained in the convex hull of $\{ \, x_i, i=1,\ldots,M \, \}$. It is required that for a normal $n$ to a facet of $Y$ and $i=1,\ldots, N$, there is a node $x_j, j=1,\ldots,M$ such that $x_j-x_i$ is parallel to $n$. This ensures that $\partial u_h(x_i) \subset Y$, c.f. Lemma \ref{inc-sub-lem} for Cartesian meshes.  
The parameter $h$ and $\partial u_h(x_i)$
are defined analogously as in SDOT. However, in this context, the discrete subdifferential is the same as the usual subdifferential, hence the notation, c.f. for example \cite[Lemma 4]{awanou2019uweakcvg}. 
We now require that $\int_{\partial u_h(x_i) } R(p) dp = \int_{E_i} f(x) dx, i=1, \ldots N$ with $u_h(x_i)$ for $i=N+1,\ldots,M$ obtained through the discrete extension formula. Here $\partial \Omega_h$ consists in the mesh points $x_i$ on the boundary of the convex hull of $\{ \, x_i, i=1,\ldots,N \, \}$, c.f. Theorem \ref{formula2}. 
The stencil $V$ is now chosen 
in such a way that $\partial u_h(x_i) = \partial_V u_h(x_i)$ for $i=1,\ldots,N$. Note that with the assumption $f>0$ on $\Omega$, for $x \in \partial \Omega_h$, $|\partial u_h(x) | \neq 0$ since $\partial u_h(x) \subset Y \subset \Omega^*$ and $R>0$ on $\Omega^*$.

We view the method proposed as the natural generalization of the Oliker-Prussner method \cite{Oliker1988} in the sense that 
it uses the notion of asymptotic cone and the usual subdifferential as in the original studies of the second boundary value problem \cite{Bakelman1994}. Compared with the Dirichlet problem, where boundary values are given at the additional nodes, here these values are obtained from the discrete extension formula. Convergence rates for the method proposed were given in \cite{berman2018convergence}. 
See also \cite{merigot2020quantitative}. We shall give a detailed argument of the convergence without convergence rates.

\subsection{Some advantages of the proposed approach}

The main ingredient in the implementation of SDOT is the computation of the convex envelope of a finite set of points. This is a classical and  hard problem in itself and is well studied in computational geometry, so that widely available software libraries can be used. If $N$ is the number of Dirac masses used in SDOT, a convex hull of $N$ points in $\R^{d+1}$ is constructed. Using for example
 the quickhull algorithm, this results in a computational complexity O($N \log N$) for $d=2$ and O($n^{\lfloor (d+1)/2 \rfloor}$) for $d \geq 3$, i.e. at least a computational complexity O($N^2$) for $d \geq 3$. As pointed out in \cite[Remark 5.5 ]{berman2018convergence}, the damped Newton's method used in \cite{kitagawa2016newton} requires to find the volume of the intersection of the cells in the power diagram. This has a worst-case complexity O($N^2$). 
 In summary, the use of a damped Newton's method in SDOT results in a worst-case complexity O($N^2$) for $d=2$ and $d=3$. 
 
 On Cartesian meshes, 
the complexity of the proposed approach for setting up the nonlinear discrete equations is dimension independent and given by O($N \#V)$, where $\#V$ denotes 
the  maximum of $\{ \, \# V(x), x \in \Omega_h \, \}$ and $N$ denotes the number of mesh points. For the stencil $V_{max}$ discussed below, $\#V =$ O$(N)$ and in that case the complexity is O($N^2$). However, the proposed approach allows to choose a stencil $V$ for which  $\#V$ is a constant independent of $N$, resulting in a linear complexity O$(N)$. A damped Newton's method is also used for solving the nonlinear equations. This requires to compute at each mesh point the volume of the facets of the discrete subdifferential, resulting again in a complexity O($N \#V)$. In summary, the proposed approach allows to choose a stencil the size of which has an upper bound independent of $N$, leading to a method with linear complexity. In the latter case, convergence of the discretization then holds for $f \in C(\Omega)$. 

For $f \in L^1(\Omega), f>0$,  with a stencil $V_{max}$ 
chosen such that $\partial_{V_{max}} u_h(x)=\partial \Gamma_2(u_h)(x)$ for all $x \in \Omega_h$ and a certain convex envelope $\Gamma_2(u_h)$ of $ u_h$,  
our convergence results can be seen as a version of arguments given in \cite[Proposition 2.3]{berman2018convergence} as $u_h$ is then equal to its convex envelope on $\Omega_h$. 

Existence and uniqueness of a solution are proved. 


\subsection{Relation with other work}

While there have been previous numerical simulations of the second boundary value problem \eqref{m1}, c.f. \cite{FroeseSJSC12,Benamou2014,Prins2015,LindseyRubinstein,kawecki2018finite}, advances on theoretical guarantees are very recent \cite{LindseyRubinstein,benamou2017minimal,Froese2019,Bonnet-Mirebeau,Brusca-Hamfeldt}. The approach in \cite{LindseyRubinstein,Froese2019} is to enforce the constraint $\chi_u (\tir{\Omega})  = \tir{\Omega^*}$ at the discrete level at all mesh points of the computational domain. Open questions include uniqueness of solutions to the discrete problem obtained in \cite{Froese2019}, existence of a solution to the discrete problem analyzed in \cite{benamou2017minimal}
and existence of a solution to the discrete problem obtained in \cite{LindseyRubinstein} for a target density $R$ only assumed to be locally integrable. 

Our work is closer to the one by Benamou and Duval \cite{benamou2017minimal} who proposed a convergence analysis based on the notion of minimal Brenier solution. Yet the two methods are fundamentally different. For example, the method in \cite{benamou2017minimal} is reported to have first order convergence for the gradient. For our method, taking forward and backward differences result in a $O(1)$ convergence rate for the gradient, i.e. the numerical errors for the gradient are merely bounded. The first order convergence rate is nevertheless achieved by selecting an element of the discrete subdifferential. 
Our analysis relies exclusively on the notions of Aleksandrov and viscosity solutions with guarantees on existence and uniqueness of a solution to the discrete problem. The uniqueness of a solution of the discrete problem is important for the use of globally convergent Newton's methods. 
Unlike the approaches in  \cite{LindseyRubinstein,benamou2017minimal,Froese2019}, we do not use a discretization of the gradient in the first equation of \eqref{m1}. See also \cite{qiu2020note} for the Dirichlet problem. Convergence of the discretization does not assume any regularity on solutions of \eqref{m1} and is proven for mesh functions, and their convex envelopes. Convergence of mesh functions implies the convergence of their convex envelopes \cite[Lemma 10]{awanou2019uweakcvg}. 
Another difference of this work with \cite{benamou2017minimal} is that we do not view the second boundary condition as an equation to be discretized. Analogous to methods based on power diagrams  \cite{Yau2013,levy2018notions}, the unknown is sought as a function over only the domain $\Omega$ with the second boundary condition enforced implicitly. 

For the approach in \cite{Yau2013,levy2018notions,kitagawa2016newton,Aurenhammer98,merigot2011multiscale},  
for efficiency and a convergence guarantee of an iterative method for solving the discrete equations, the use of power diagrams with a damped Newton's method is advocated \cite{kitagawa2016newton}.  
However, that approach results in a worst-case complexity O($N^2$) for $d=2$ and $d=3$. 
To avoid the complication of constructing power diagrams in three dimensions for the Dirichlet problem, Mirebeau in \cite{Mirebeau15} proposed a scheme which is medius between finite differences and power diagrams.
The discretization of \eqref{m1} analyzed in this paper 
is also medius between finite differences and power diagrams. 
Dealing with the second boundary condition requires to take into account the domain $\Omega^*$, and hence 
our discretization depends on $\Omega^*$. As with \cite{Mirebeau15} the implementation of our scheme does not require any of the subtleties required to deal with power diagrams in three dimensions. 
The proof of convergence of a damped Newton's method for solving the nonlinear equations resulting from the discretization, has been given in \cite{Awanou-damped}. As with the approaches in \cite{Yau2013,levy2018notions,kitagawa2016newton,Aurenhammer98,merigot2011multiscale,qiu2020note}, numerical integration may be required. 
%


\subsection{Organization of the paper}
 
We organize the paper as follows: In the next section we introduce some notation and the weak formulation of \eqref{m1}. We then describe the numerical scheme and recall some results on the convex envelopes of mesh functions. Existence, uniqueness and stability of solutions are given in section \ref{an-scheme}. In section \ref{asym} we review the notion of asymptotic cone of convex sets. This leads to the extension formula which has motivated the numerical scheme. We then recall the interpretation of \eqref{m1} as \cite{Oliker03} '' the second boundary value problem for Monge-Amp\`ere equations arising in the geometry of convex hypersurfaces \cite{Bakelman1994} and mappings with a convex potential \cite{Caffarelli92}.'' With the notion of asymptotic cone we prove further results about convex extensions. 
Section \ref{comp-section} is a review of polyhedral set theory and uses a matrix formalism to revisit most of the results we prove in section \ref{asym} directly from the geometric definition of asymptotic cone. Section \ref{comp-section} may be viewed as an appendix.
In section \ref{wcg} we present results about weak convergence of Monge-Amp\`ere measures for discrete convex  mesh functions. 
In section 
\ref{cvg} 
we give several convergence results for the approximations. 
The results in sections \ref{an-scheme} and \ref{cvg} assume that $f>0$ in $\Omega$. 
In section \ref{degenrate-case}, we consider the degenerate case $f\geq 0$. 
Numerical experiments are reported in section \ref{Numerical}. We give some additional remarks in the appendix. Therein we 
revisit 
convex extensions in terms of 
infimal convolution. 

\section{The discrete scheme} \label{thescheme}

In this section, we introduce some notation and recall the interpretation of \eqref{m1} as the second boundary value problem for Monge-Amp\`ere equations arising in the geometry of convex hypersurfaces. We then recall discrete versions of the notion of subdifferential and describe the numerical scheme. We now assume that $R=0$ on $\R^d \setminus \Omega^*$. Recall that $R>0$ on $\Omega^*$ and $R \in L^1(\Omega^*)$.

\subsection{R-curvature of convex functions} \label{R-curvature}

Let $v$ be a convex function on $\R^d$. For $y \in \R^d$, the normal image of the point $y$ (with respect to $v$) or the subdifferential of $v$ at $y$ is defined as
\begin{align*}
\chi_v (y) = \{ \, q \in \R^d: v(x) \geq v(y) + q \cdot (x-y), \, \text{for all} \, x \in\R^d\,\}.
\end{align*}
For $y \in \Omega$, the local normal image of the point $y$ (with respect to $v$) or the local subdifferential of $v$ at $y$ is defined as
\begin{align*}
\partial v (y) = \{ \, q \in \R^d: v(x) \geq v(y) + q \cdot (x-y), \, \text{for all} \, x \in \Omega\,\}.
\end{align*}
Since we have assumed that $\Omega$ is convex and $v$ is convex, the local normal image and the normal image coincide for $y \in \Omega$ \cite[Exercise 1]{Guti'errezExo}. We recall that a domain is a non empty open and connected set. In particular, $\Omega^*$ is non empty. 

For $q, y \in \R^d$ and $\mu \in \R$, the set of points $\{ \, (x,z) \in \R^{d+1}, x \in \R^d, z \in \R, z= \mu + q \cdot (x-y) \, \}$ is called a hyperplane. When $q \in \chi_v (y)$, $v(y) + q \cdot (x-y)$ is called a supporting hyperplane. It is known that when $v$ is differentiable at $y$, $\chi_v(y) = \{ \, D v(y)\, \}$. For the function $v$ given by $v(x)=|x|, x \in \R$, we have $\chi_v(0)=[-1,1]=\chi_v(\R)$. 

For any subset $E \subset \R^d$, the normal image of $E$ (with respect to $v$) is defined as 
$$\chi_v(E) = \cup_{x \in E}\chi_v(x).$$ 
The set $\partial v(E)$ is defined analogously.

The presentation of the R-curvature of convex functions given here is essentially taken from \cite{Bakelman1994} to which we refer for further details. 
It can be shown that $\chi_v(E)$ is Lebesgue measurable when $E$ is also Lebesgue measurable. The R-curvature of the convex function $v$ is defined as the set function
$$
\omega(R,v,E) = \int_{\chi_v(E)} R(p) d p,
$$
which can be shown to be a measure on the set of Borel subsets of $\R^d$.  For an integrable function $f \geq 0$ on $\Omega$ and extended by 0 to $\R^d$, equation \eqref{m1} is the equation in measures
\begin{align} \label{m1m}
\begin{split}
\omega(R,u,E) &= \int_E f(x) d x \text{ for all Borel sets } E \subset \tir{\Omega} \\ 
\chi_u (\tir{\Omega})  &= \tir{\Omega^*}.
\end{split}
\end{align}
This implies the compatibility condition 
\begin{equation} \label{necessary}
\int_{\Omega} f(x) d x = \int_{\Omega^*} R(p) d p.
\end{equation} 
In \eqref{m1m}, the unknown is a convex function $u$ defined on $\tir{\Omega}$ with a convex extension, c.f. section \ref{subdifferential}, that satisfies
$\chi_u (\R^d) = \chi_u (\tir{\Omega})  = \tir{\Omega^*}$.

\subsection{Discretizations of the R-curvature  } \label{disc-R-curvature}

We consider a non degenerate polygonal domain $Y \subset \tir{\Omega^*}$  with boundary vertices $a^*_j, j=1,\ldots,N^*$. 
We first solve an approximate problem where the solution satisfies $\chi_u (\tir{\Omega})=Y$. In view of the compatibility condition \eqref{necessary}, we consider a modified right hand side 
\begin{equation} \label{modified-f}
\tilde{f}(x) = (1 - \epsilon) f(x), \quad \epsilon = \frac{\int_{\Omega^*\setminus Y} R(p) d p}{\int_{\Omega} f(x) d x}.
\end{equation}
The truncation $\tilde{f}$ depends on $Y$ and that dependence will be made explicit in section \ref{cvg} where we use the notation $\tilde{f}_{Y}$.

Note that since $R>0$ on $\Omega^*$, by \eqref{necessary} $\int_{\Omega} f(x) d x >0$. Furthermore
$$
\int_{\Omega^*\setminus Y} R(p) d p = \int_{\Omega^*} R(p) d p - \int_{Y} R(p) d p
= \int_{\Omega} f(x) d x - \int_{Y} R(p) d p <  \int_{\Omega} f(x) d x,
$$
so that $0<\epsilon <1$. Moreover, in view of \eqref{necessary}, we obtain
$$
  \int_{\Omega} f(x) d x - \int_{\Omega^*\setminus Y} R(p) d p
 =  \int_{\Omega^*} R(p) d p -  \int_{\Omega^*\setminus Y} R(p) d p=\int_{Y} R(p) d p. 
$$
Therefore
\begin{equation} \label{necessary2}
 \int_{\Omega} \tilde{f}(x) d x = \int_{Y} R(p) d p.
\end{equation}
We therefore consider, using a slight abuse of notation for $u$, the problem: find $u$ convex on $\R^d$ such that
 \begin{align} \label{m2}
\begin{split}
\omega(R,u,E) &= \int_E \tilde{f}(x) d x \text{ for all Borel sets } E \subset \tir{\Omega}\\
\chi_u (\tir{\Omega}) & = Y.
\end{split}
\end{align}

Let $h$ be a small positive parameter and let
$
\mathbb{Z}^d_h = a+ \{\, m h, m \in \mathbb{Z}^d \, \}
$
denote the orthogonal lattice with mesh length $h$, with an offset $a \in \R^d$. 
Put $\Omega_h=\Omega \cap  \mathbb{Z}^d_h$ and denote by $\{ \, r_1,\ldots,r_d \, \}$ the canonical basis of $\R^d$. 
If $\Omega=(0,1)^d$ and we take $a=(1/2,\cdots,1/2)$, then $\tir{\Omega}= \cup_{x \in \Omega_h} x+ [-h/2,h/2]^d$. This partition of 
$\tir{\Omega}$ implies the mass conservation condition \eqref{mass-conservation} below. 
\begin{definition} 
A stencil $V$ is a set valued mapping from $\Omega_h$ to the set of finite subsets of $\mathbb{Z}^d\setminus \{0\}$. 
\end{definition} 

We will make the abuse of notation of writing $e \in V$ for $e \in V(x)$ when considering the points $x \pm h e$.

A subset $W$ of $\mathbb{Z}^d$ is symmetric with respect to the origin if $\forall y \in W, -y \in W$. Recall that a facet of a polygon $Y \subset \R^d$ is a $(d-1)$-dimensional face of $Y$, c.f. section \ref{comp-section} for the definition of faces.

We define $V_{min}$ to be a finite subset of $\mathbb{Z}^d\setminus \{0\}$ which is symmetric with respect to the origin, contains the elements of the canonical basis of $\R^d$, and contains a vector parallel to a normal to each facet of the domain $Y$. 


The assumption that $V_{min}$ contains a normal to each facet of the domain $Y$ may seem restrictive. 
However the approximate polygonal domain $Y$ to $\Omega^*$ can be chosen such that normals to its facets are parallel to vectors in $\mathbb{Z}^d$. 

Next, we consider the domain 
$$
\Omega_{ext} = \Omega_h \cup \{ \, x+ h e: x \in \Omega_h, e \in V_{min}
\, \}.
$$ 
Recall that $\Omega_h = \Omega \cap \mathbb{Z}_h^d$. The stencil $V_{max}$ is 
defined for $x \in \Omega_h$ as
\begin{equation} \label{vmax}
V_{max}(x) = \{ \, e \in \mathbb{Z}^d \setminus \{0\}, \exists y \in \Omega_{ext}, y = x+ h e \, \}.
\end{equation}

{\bf Assumption} The stencil $V$ is required to satisfy
$$
V_{min} \subset V(x) \subset V_{max}(x), x \in \Omega_h.
$$
Let 
$$
\partial \Omega_h = \{ \, x \in \Omega_h  \text{ such that for some } 
e \in \{ \, \pm r_1,\ldots, \pm r_d \, \}, 
x + h e \notin \Omega_h
\,  \}.
$$
Note that we have by our definition
$$
\partial \Omega_h \subset \Omega_h, 
$$
and if $x \in \Omega_h \setminus \partial \Omega_h$, then for all $e \in \{ \, \pm r_1,\ldots, \pm r_d \, \}$, $x+ h e \in \Omega_h$.

Recall that $\{ \, r_1,\ldots,r_d \, \}$ denotes the canonical basis of $\R^d$. For $x \in \Omega_h$ and $ e \in \mathbb{Z}^d$ let
$
h^e_x = \sup \{ \, r h, r \in [0,1] \ \text{and} \ x+rh e \in \tir{\Omega} \, \}
$. We define
 $$
 \mathcal{N}_h^1 = \Omega_h \cup  \{ \, x \in \partial \Omega,  \exists y \in \Omega_h  \ \text{and} \ e \in 
 V(y) \cup \{ \, 0 \, \}
 \text{ such that } x= y + h^e_y e  
  \}.
 $$
 We also define
 $$
 \mathcal{N}_h^2 = \{ \, x \in \mathbb{Z}^d_h, x=y+he, e \in 
 V_{max}(y) \cup \{ \, 0 \, \} \text{ and } y \in \Omega_h\, \},
 $$
 where the 
 stencil $V_{max}$ is given by \eqref{vmax}, i.e. 
 $e \in V_{max}(x)$ if and only if $x= y - h e$ for $y \in \Omega_{ext} = \Omega_h \cup \{ \, x+ h e: x \in \Omega_h, e \in V_{min} \, \}$. Recall that $V_{min}$ is symmetric with respect to the origin,  contains $(r_1, \ldots, r_d)$ as well as vectors parallel to normals of the facets of $Y$. 
 We have
 $$
 \mathcal{N}_h^1  \subset \tir{\Omega} \subset \Conv( \mathcal{N}_h^2).
 $$
 We claim that $ \mathcal{N}_h^2= \Omega_{ext} $. By definition, $\Omega_h \subset  \mathcal{N}_h^2$ and for $x \in \Omega_h$ and $e \in V_{min}$, $x+h e \in  \mathcal{N}_h^2$ since $V_{min} \subset V_{max}(x)$. Thus $\Omega_{ext} \subset  \mathcal{N}_h^2$. Let $z \in  \mathcal{N}_h^2$, $z = y_1 + h e, y_1 \in \Omega_h, e \in V_{max}(y_1)$. Let $y_2 \in \Omega_{ext}$ such that $y_1 = y_2 - he$. 
Thus $z = y_2$ and $ \mathcal{N}_h^2 \subset \Omega_{ext}$. This gives $\mathcal{N}_h^2 \subset \Omega_{ext}$. The claim is proved.

The unknown in the discrete scheme is a mesh function (not necessarily the interpolant of a convex function) on $\Omega_h$ which is extended to $\mathbb{Z}^d_h$ using the discrete extension formula 
\begin{equation} \label{extension}
v_h(x) = \min_{ y \in \partial \Omega_h }  \big( v_h(y) + \max_{1 \leq j \leq N} (x-y) \cdot a_j^* \big),
\end{equation}
motivated by Theorem \ref{formula2} below.

We consider the following analogue of the subdifferential of a function. For $x \in \mathbb{Z}^d_h$ and a mesh function $v_h$, we define 
$$
\partial_V v_h(x) = \{ \, p \in \R^d, p \cdot  (h e) \geq v_h(x) -v_h(x-h e) \, \forall  e \in V(x)  \, \},
$$
and consider the following discrete version of the R-Monge-Amp\`ere measure 
$$
\omega_V(R,v_h,E) := \int_{\partial_V v_h(E)} R(p) d p,
$$
where $\partial_V v_h(E) = \cup_{x \in E}\partial_V v_h(x)$. 

For the Dirichlet problem, a discrete version of the R-curvature has been used in \cite{qiu2020note} where a generalization of the discretization proposed in \cite{Oliker1988} for $R=1$ was studied. Integration of the density function $R$ (and hence the need of numerical integration) over power diagrams appears in the semi-discrete approach to optimal transport \cite{Yau2013,levy2018notions,kitagawa2016newton,Aurenhammer98,merigot2011multiscale}.

A discretization based on $\partial_V v_h$ may not be accurate for $V=V_{min} $ while for $V=V_{max}$ one may need to use power diagrams and a damped Newton's method as in semi-discrete optimal transport. For the case of the 
stencil $V_{max}$, we define
$$
\omega_a(R,v_h,E) := \int_{\partial_{V_{max}} v_h(E)} R(p) d p.
$$
The discretization considered in \cite{Mirebeau15} used a symmetrization of the subdifferential. The subscript $a$ in the notation $\omega_a(R,v_h,E)$ recalls that we use here an asymmetrical version.

The coordinates of a vector $e \in \mathbb{Z}^d$ are said to be co-prime if their great common divisor is equal to 1.
For a quadratic polynomial $p$ such that $0 < \lambda \leq D^2 p \leq \Lambda$ and for $x \in \R^d, p(x) = 1/2 \ x^T M x$ for a $d \times d$ matrix $M$ with condition number less than $\kappa$ for 
 $\kappa >0$, consistency of $\partial_V p(x)$ at mesh points $x$ 
at a distance $h \sqrt{d} \kappa$ from $\partial \Omega$, 
can be proven as in   \cite{Mirebeau15,Nochetto19}, 
provided $V(x)$ contains all vectors $e \in V_{max}(x)$ with co-prime coordinates such that $|e| \leq 1/2 \sqrt{d} \kappa$. 

For $\kappa >0$, define $V_{\kappa}$ to be a mesh independent stencil such that 
$V_{\kappa}$ consists of all vectors $e \in \mathbb{Z}^d \setminus \{ \, 0 \, \}$ with co-prime coordinates such that $|e| \leq 1/2 \sqrt{d} \kappa$. The factor $1/2 \sqrt{d}$ is motivated by Lemma \ref{condition-number} below.
Given $x \in \Omega_h$ such that $d(x,\partial \Omega) > h \sqrt{d} \kappa$, we have $V_{\kappa} \subset V_{max}(x)$, 
since for $e \in V_{\kappa}$, $|h e| \leq h/2 \sqrt{d} \kappa < h \sqrt{d} \kappa < d(x,\partial \Omega)$ and hence 
$y=x+h e \in \Omega_h \subset \Omega_{ext}$. If necessary, by taking $\kappa$ large, we may assume that $V_{min} \subset V_{\kappa}$.



In section \ref{cvg}, we first prove convergence of the discretization for $V=V_{max}$. Then we allow 
$V=V_{\kappa} \cap V_{max}$
and thus have a two-scale approximation $u_{h,\kappa}$. Note that the size of $V_{\kappa} \cap V_{max}(x)$ for $x \in \Omega_h$, is uniformly bounded in $x$, with an upper bound independent of $N$. For that reason, the complexity of the resulting method is O$(N)$. 

We will show that as $h \to 0$, $u_{h,\kappa}$ converges uniformly on $\tir{\Omega}$ to a continuous function $v_{\kappa}$ which solves $R(D v) \det D^2 v = f$ in in the sense of viscosity. For $f \in C(\Omega)$, we then compare $v_{\kappa}$ to a class of strictly convex quadratic polynomials parameterized by $\kappa$. The limit as $\kappa \to + \infty$ of $v_{\kappa}$ is a convex function which solves \eqref{m2}.



We define for a function $v_h$ on $\mathbb{Z}^d_h$, $e \in \mathbb{Z}^d$ and $x \in \mathbb{Z}^d_h$ 
$$
\Delta_{h e} v_h (x) =  v_h(x+ h e ) - 2 v_h(x) + v_h(x- h e ). 
$$

\begin{definition}  \label{def-dc}
A mesh function $v_h$ on $\Omega_h$ extended to $\mathbb{Z}^d_h$ using \eqref{extension} is discrete convex if $\Delta_{h e} v_h(x) \geq 0$ for all 
$x \in \Omega_h$ and $e \in V_{max}(x)$ such that $x\pm he \in \mathcal{N}_h^2$. A mesh function $v_h$ is $V$-discrete convex if $\Delta_{h e} v_h(x) \geq 0$ for all 
$x \in \Omega_h$ and $e \in V_{}(x)$ such that $x\pm he \in \mathcal{N}_h^2$. 
\end{definition}

A $V_{max}$-discrete convex mesh function is discrete convex. Denote by $\mathcal{C}_h$ the set of discrete convex mesh functions.  

\begin{definition} \label{def-as-cone}
A mesh function on $\Omega_h$ which is extended to $\mathbb{Z}^d_h$ using the discrete extension formula \eqref{extension}, and which is discrete convex is said to have asymptotic cone $K$ associated with $Y$.
\end{definition}

Below, we will consider only discrete convex mesh functions with asymptotic cone $K$. We can now describe our discretization of the second boundary value problem: find $u_h \in \mathcal{C}_h$ with asymptotic cone $K$ such that
\begin{align} \label{m2d3}
\begin{split}
\omega_V(R,u_h,\{\, x \,\})&= \int_{E_x} \tilde{f}(t) dt , x \in \Omega_h,
\end{split}
\end{align}
where $(E_x)_{x \in \Omega_h}$ form a partition of $\Omega$, i.e. $E_x \cap \Omega_h = \{ \, x \, \}, \cup_{x \in \Omega_h} E_x = \tir{\Omega}$, and $E_x \cap E_y$ is a set of measure 0 for $x \neq y$. In the interior of $\Omega$ one may choose as
 $E_x=x+[-h/2,h/2]^d$ the cube centered at $x$ with $E_x \cap \Omega_h = \{ \, x \, \}$. The requirement that the sets $E_x$ form a partition is essential to assure the mass conservation \eqref{necessary2} at the discrete level, i.e. 
\begin{align} \label{mass-conservation}
\sum_{x \in \Omega_h} \omega_V(R,u_h,\{\, x \,\}) = \sum_{x \in \Omega_h} \int_{E_x} \tilde{f}(t) dt = \int_{\Omega}  \tilde{f}(t) dt = \int_{Y} R(p) d p. 
\end{align} 
The unknowns in \eqref{m2d3} are the mesh values $u_h(x), x \in \Omega_h$. For $z \notin \Omega_h$, the value $u_h(z)$ needed for the evaluation of $\partial_{V} v_h (x) $ is obtained from the discrete extension formula \eqref{extension}.

Let $u_h$ be discrete convex with asymptotic cone $K$. Recall that the values of $u_h$ on $ \mathcal{N}_h^1 \setminus \Omega_h$ are given by \eqref{extension}. 
Let
\begin{align*}
\partial_h u_h(x) = \{ \, p \in \R^d, u_h(y) \geq u_h(x) + p \cdot (y-x) \, \forall  y \in \mathcal{N}_h^1
\, \},
\end{align*}
and recall that
$$
\partial_V u_h(x) = \{ \, p \in \R^d, p \cdot  (h e) \geq u_h(x) -u_h(x-h e) \, \forall  e \in V(x)  \, \}.
$$
We consider two kinds of convex envelopes of the mesh function $u_h$ 
\begin{align*} 
\Gamma_1(u_h)(x) &= \sup_{L \text{ affine} }\{ \, L(x): L(y) \leq u_h(y)  \text{ for all } y \in \mathcal{N}_h^1 \, \} \text{ and }\\
\Gamma_2(u_h)(x) &= \sup_{L \text{ affine} }\{ \, L(x): L(y) \leq u_h(y)  \text{ for all } y \in \mathcal{N}_h^2
\, \},
\end{align*}
which are piecewise linear convex functions, c.f. for example \cite[p. 11]{awanou2019uweakcvg}. We note that $ \mathcal{N}_h^1$ depends on the stencil $V$. 
Note also that the definition of the convex envelope $\Gamma_1(u_h)$ above allows an ''infinite slope'' at points of $\R^d$ not in $\Conv (\mathcal{N}_h^1)$. 
If $u$ is a convex function on $\Omega$, 
we can extend $u$ to $\R^d$, c.f. \eqref{ext-sh} below, by
\begin{equation*} 
\tir{u}(x) = \sup \{ \, u(y) +  (x-y) \cdot z, y \in \Omega, z \in \partial u(y)  \, \}.
\end{equation*}
We denote by $\chi_{u}$ the subdifferential of the extended function to $\R^d$. 
Thus $\chi_{\Gamma_1(u_h)}$ denotes the subdifferential of the extension to $\R^d$ 
of $\Gamma_1(u_h)$, 
i.e. for $x \notin \Conv(\mathcal{N}^1_h)$
\begin{equation} \label{gamma1-ext}
\Gamma_1(u_h) (x) = \sup \{ \, \Gamma_1(u_h)(y) + q \cdot (x-y), y \in (\Conv(\mathcal{N}^1_h))^\circ, 
q \in \partial \Gamma_1(u_h)(y)
\, \},
\end{equation}
where for a set $D$, $D^\circ$ denotes its interior.

In \cite{awanou2019uweakcvg}, we introduced the notation
\begin{equation*} 
\Delta_e v_h (x) = \frac{2}{h^e_x + h^{-e}_x} \bigg( \frac{v_h(x+ h^e_x e ) - v_h(x)}{h^e_x}  + \frac{v_h(x- h^{-e}_x e ) - v_h(x)}{h^{-e}_x} \bigg).
\end{equation*}
A notion of $V$-discrete convexity was introduced in \cite[Definition 3]{awanou2019uweakcvg} by requiring $\Delta_e v_h (x) \geq 0$ for all $e \in V(x)$. Therein the focus was on mesh functions which converge to a convex function. To require that discrete convexity holds on all directions supported by the mesh, $V$ was taken as  $V=\mathbb{Z}^d \setminus \{ \, 0 \, \}$, which is not correct. 

The correct definition of discrete convexity in the sense of \cite{awanou2019uweakcvg} is to require that $\Delta_e v_h (x) \geq 0$ for all $e \in \mathbb{Z}^d$ for which $x+ h^e_x e \in  \mathcal{N}_h^1$ and $x- h^{-e}_x e \in  \mathcal{N}_h^1$. 




The above remark also applies to the work in \cite{DiscreteAlex2}. In addition, the convergence analysis therein for the Dirichlet problem, holds for a stencil $V_{max}$ which 
contains $\{ \, e \in \mathbb{Z}^d, x+ h^e_x e \in  \mathcal{N}_h^1 \, \}$.

The following theorem follows from  \cite[Lemmas 6 and 7]{awanou2019uweakcvg}, \cite[Theorem 6]{awanou2019uweakcvg}  and \cite[Theorem 4]{awanou2019uweakcvg} where we considered $\partial_h u_h$ in connection with $\Gamma_1(u_h)$. 

\begin{theorem} \label{old-01} 
If $x \in \Omega_h$ and 
$\Gamma_1(u_h)(x) = u_h(x)$, then $\partial \Gamma_1(u_h)(x) =  \partial_h u_h(x)$. If $x \in \Omega_h$ and $\Gamma_1(u_h)(x) \neq u_h(x)$, then $ \partial_h u_h(x)=\emptyset$.  If $x \in \Conv (\mathcal{N}_h^1)$, for any  $p \in   \chi_{\Gamma_1(u_h)}(x)$, $\exists y \in \mathcal{N}_h^1$ such that $p \in   \chi_{\Gamma_1(u_h)}(x) \cap \chi_{\Gamma_1(u_h)}(y)$ and $\Gamma_1(u_h)(y) = u_h(y)$. 

Moreover, for a  subset $E \subset (\Conv (\mathcal{N}_h^1))^\circ $, $\partial_h u_h(E) = \partial \Gamma_1(u_h)(E)$ up to a set of measure 0 and thus
$$
\omega(R,\Gamma_1(u_h),E) = \int_{\partial_h u_h(E)} R(p) dp.
$$
\end{theorem}

Analogous to Theorem \ref{old-01}, we have 

\begin{theorem} \label{from-out} 
If $x \in \Omega_h$ and 
$\Gamma_2(u_h)(x) = u_h(x)$, then $\partial \Gamma_2(u_h)(x) =  \partial_{V_{max}} u_h(x)$. If $x \in \Omega_h$ and $\Gamma_2(u_h)(x) \neq u_h(x)$, then $ \partial_{V_{max}} u_h(x)=\emptyset$.  If $x \in \Conv (\mathcal{N}_h^2)$, for any  $p \in   \chi_{\Gamma_2(u_h)}(x)$, $\exists y \in \mathcal{N}_h^2$ such that $p \in   \chi_{\Gamma_2(u_h)}(x) \cap \chi_{\Gamma_2(u_h)}(y)$ and $\Gamma_2(u_h)(y) = u_h(y)$. 

Moreover, for a  subset $E \subset (\Conv (\mathcal{N}_h^2))^\circ $, $\partial_{V_{max}} u_h(E) = \partial \Gamma_2(u_h)(E)$ up to a set of measure 0 and thus
$$
\omega_a(R,u_h,E) = \omega_a(R,\Gamma_2(u_h),E).
$$
\end{theorem}

\begin{remark} \label{dc-in-the-sense}
We observe that if $f>0$ on $\Omega$ and $V=V_{max}$, a mesh function $v_h$ which solves \eqref{m2d3}
is discrete convex, as defined in \cite{awanou2019uweakcvg}. 
This follows from Lemma \ref{gamma-lem} below which gives $v_h = \Gamma_1(v_h)$ on $\mathcal{N}_h^1$. Since $\Gamma_1(v_h)$ is piecewise linear convex on $\mathcal{N}_h^1$,  $\Delta_e v_h (x) \geq 0$ for all $x \in \Omega_h$, i.e. $v_h$ is discrete convex as defined in \cite{awanou2019uweakcvg}.
\end{remark}

The next lemma shows that the $V$-discrete convexity assumption is automatically satisfied for a discrete solution when $f>0$.

\begin{lemma} \label{auto-dc}
If $f>0$ in $\Omega$, a mesh function on $\Omega_h$ extended to $\mathbb{Z}^d_h$ using \eqref{extension}, and which solves 
  \eqref{m2d3} is $V$-discrete convex.
\end{lemma}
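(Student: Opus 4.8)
The plan is to show that if $u_h$ solves \eqref{m2d3} and is extended by \eqref{extension}, then $\Delta_{he} u_h(x) \geq 0$ for all $x \in \Omega_h$ and all $e \in V$; that is, $u_h$ is discrete convex in the sense required. The key mechanism is that $f > 0$ forces the discrete subdifferential $\partial_V u_h(x)$ to be nonempty at every interior mesh point: since $\omega_a(R,u_h,\{x\}) = \int_{E_x}\tilde f(t)\ud t > 0$, the set $\partial_V u_h(x)$ must have positive $R$-measure, hence in particular $\partial_V u_h(x) \neq \emptyset$ for each $x \in \Omega_h$. (Here I use that $\tilde f = (1-\epsilon)f$ with $0 < \epsilon < 1$, so $\tilde f > 0$ wherever $f > 0$, and $E_x$ has positive Lebesgue measure.)

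Next I would translate nonemptiness of $\partial_V u_h(x)$ into the discrete convexity inequalities. By definition, $p \in \partial_V u_h(x)$ means $p \cdot (he) \geq u_h(x) - u_h(x - he)$ for all $e \in V$. Since $V$ is symmetric with respect to the origin, both $e$ and $-e$ lie in $V$, so we simultaneously get $p\cdot(he) \geq u_h(x) - u_h(x-he)$ and $-p\cdot(he) = p\cdot(h(-e)) \geq u_h(x) - u_h(x+he)$. Adding these two inequalities, the $p$-terms cancel and we obtain $0 \geq 2u_h(x) - u_h(x-he) - u_h(x+he) = -\Delta_{he}u_h(x)$, i.e. $\Delta_{he}u_h(x) \geq 0$. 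This holds for every $e \in V$ and every $x \in \Omega_h$, which is precisely the definition of $u_h \in \mathcal{C}_h$ (discrete convexity via $\Delta_{he} \geq 0$ on $\Omega_h$, $e \in V$). One should be a little careful that the values $u_h(x \pm he)$ appearing here are well-defined: for $x \in \Omega_h$ they are either genuine mesh unknowns or, when $x \pm he \notin \Omega_h$, are supplied by the extension formula \eqref{extension}, so the differences $\Delta_{he}u_h(x)$ make sense for all $x \in \Omega_h$, $e \in V$, which is exactly the index set over which discrete convexity is required by Definition \ref{def-as-cone}.

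The main obstacle I anticipate is justifying that $\omega_a(R,u_h,\{x\}) > 0$ genuinely forces $\partial_V u_h(x) \neq \emptyset$ — one must rule out the degenerate reading in which $\partial_V u_h(x) = \emptyset$ yet the integral is somehow positive, which is impossible since $\int_\emptyset R\,\ud p = 0$, so the only real content is that $\int_{E_x}\tilde f > 0$, and this follows from $f > 0$ on $\Omega$ together with $|E_x| > 0$ and $0 < 1-\epsilon$. A secondary point worth a sentence is that discrete convexity here is demanded only on $\Omega_h$ for directions in $V$, not on all of $\mathbb{Z}^d_h$ — which is consistent with the remark in the text that extending a discrete convex function need not yield something discrete convex on the whole lattice — so the argument does not need, and should not attempt, to establish the inequalities at points of $\mathbb{Z}^d_h \setminus \Omega_h$. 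With these observations the proof is short; the only thing to get right is the clean cancellation argument using the symmetry of $V$.
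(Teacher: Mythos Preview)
Your proof is correct and follows essentially the same approach as the paper: use $f>0$ (hence $\tilde f>0$) to get $\omega_a(R,u_h,\{x\})>0$, conclude $\partial_V u_h(x)\neq\emptyset$, then exploit the symmetry of $V$ to obtain $\Delta_{he}u_h(x)\geq 0$. The paper writes the two inequalities as the chain $u_h(x)-u_h(x-he)\leq p\cdot(he)\leq u_h(x+he)-u_h(x)$ rather than adding them, but this is only a cosmetic difference.
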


\begin{proof} 
It is a consequence of Lemma \ref{inc-sub-lem} below that a discrete convex mesh function $v_h$ which solves  \eqref{m2d3} using the discrete extension formula  \eqref{extension} satisfies $\partial_V v_h(\Omega_h) \subset Y \subset \Omega^*$. Recall that $R>0$ on $\Omega^*$. If $f>0$ in $\Omega$, and $x \in \Omega_h$, we have $\omega_V(R,u_h,\{\, x \,\}) >0$ and hence $\partial_V v_h(x) \subset \Omega^*$ is a set with a non 
zero Lebesgue measure. In particular, it is non empty. Assume that $e \in V(x)$ 
and $x\pm he \in \mathcal{N}_h^2$. For $p \in \partial_V v_h(x)$, we have
$$
v_h(x) -v_h(x-h e) \leq p \cdot (h e) \leq v_h(x+h e) -v_h(x). 
$$
This implies that $v_h(x) -v_h(x-h e) \leq v_h(x+h e) -v_h(x)$ and 
hence $\Delta_{h e} v_h (x) \geq 0$ for all $e \in V(x)$. \Qed
\end{proof}

\begin{remark}
From Lemma \ref{auto-dc}, the $V$-discrete convexity assumption does not need to be explicitly imposed when $f>0$ in $\Omega$. 
However, unless $V = V_{max}$, 
uniform limit of 
$V$-discrete convex mesh functions need not be convex. 
\end{remark}

The support function $k_{Y}$ of the closed convex set $Y$ 
is defined by
$
k_{Y}(p) = \sup_{z \in Y} p \cdot z.
$
The definition essentially says that for the direction $p$, $Y$ lies on one side of the hyperplane $p \cdot z = k_{Y}(p)$. 
For $x=(x_1,\ldots,x_d) \in R^d$, put $||x||_1=\sum_{i=1,\ldots,d} |x_i |$.

We need the following lemma which follows from \cite[Proposition 4.3]{benamou2017minimal}. 


\begin{lemma} \label{pre-lip-lem}
Let $v_h$ be a mesh function and 
$e \in V_{min}$ such that $\Delta_{h e} v_h(x) \geq 0$ for $x \in \Omega_h$, with $v_h(x)$ for 
$x \notin \Omega_h$ given by \eqref{extension}. 
Then, for integers $k$ and $l$ with $k \geq 0, l \leq 0$ such that $x+k h e$ and $x+l he$  are in $\Omega_h$
\begin{multline} \label{lipsh-1}
 -k_{Y}(-h e) \leq v_h(x+ l he) - v_h(x+(l-1) h e) \\ \leq  v_h(x+(k+1) he)-v_h(x+k h e)  \leq k_{Y}(h e). 
\end{multline}
Moreover
\begin{equation} \label{Lipschitz}
 |v_h(x)-v_h(y)| \leq C ||x-y||_1,
\end{equation}
for $x, y \in \tir{\Omega} \cap \mathbb{Z}^d_h$ and for a constant $C=\max \{\, | k_{Y}(-r_i)|, | k_{Y}(r_i)|, i=1,\ldots,d  \, \}$ independent of $h$ and $v_h$.
\end{lemma}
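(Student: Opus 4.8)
The plan is to prove the discrete monotonicity chain \eqref{lipsh-1} first, and then extract the Lipschitz bound \eqref{Lipschitz} as a routine consequence. Fix $e \in V$. Since $\Delta_{he} v_h(x) = \big(v_h(x+he)-v_h(x)\big) - \big(v_h(x)-v_h(x-he)\big) \geq 0$ for all $x \in \Omega_h$, the forward differences along the $e$-line, $\delta_m := v_h(x+(m+1)he)-v_h(x+mhe)$, are nondecreasing in $m$ as long as we stay within $\Omega_h$. This immediately gives the middle inequality $v_h(x+lhe)-v_h(x+(l-1)he) \leq v_h(x+(k+1)he)-v_h(x+khe)$ whenever $l-1 \le k$, i.e. $k \ge l$. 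The work is therefore concentrated at the two ends of the chain: bounding the extreme forward differences by $\pm\sigma_{K^*}(\pm he)$ using the extension formula \eqref{extension}.

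For the upper end, I would use that at a point $z = x+(k+1)he$ which is on or just past the boundary, the value $v_h(z)$ is given (or bounded above) by the extension formula. Concretely, take the point $y \in \partial\Omega_h$ realizing the minimum in \eqref{extension} for the argument $z = x+(k+1)he$; then $v_h(z) = v_h(y) + \max_j (z-y)\cdot a_j^*$. Comparing with the candidate $y' \in \partial\Omega_h$ (or with the same $y$) for the argument $z - he = x+khe$, and using that $\max_j(w-y)\cdot a_j^* = \sigma_{K^*}(w-y)$ together with the subadditivity/positive-homogeneity of the support function, one gets $v_h(z) - v_h(z-he) \le \sigma_{K^*}(he)$. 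The symmetric argument at the lower end, using $-e \in V$ (the stencil is symmetric), gives $v_h(x+lhe)-v_h(x+(l-1)he) \ge -\sigma_{K^*}(-he)$. Some care is needed here because the chain mixes interior points (where $v_h$ is a free unknown satisfying only $\Delta_{he}v_h \ge 0$) with exterior points (where $v_h$ is pinned by \eqref{extension}); the monotonicity of $\delta_m$ lets one propagate the endpoint bounds — obtained at genuinely exterior nodes — back to the whole chain. This bookkeeping, matching which minimizing $y$ serves for adjacent arguments, is the main obstacle and is exactly where \cite[Proposition 4.3]{benamou2017minimal} is invoked.

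For \eqref{Lipschitz}, I would first note that \eqref{lipsh-1} gives, for any two grid points $x, x'$ of $\tir\Omega \cap \mathbb{Z}^d_h$ lying on a common axis line $x' = x + m h r_i$, the one-step bound $|v_h(x+hr_i)-v_h(x)| \le \max\big(\sigma_{K^*}(hr_i),\sigma_{K^*}(-hr_i)\big)$, since $r_i \in V$ by the Assumption. Because $\sigma_{K^*}$ is positively homogeneous, $\sigma_{K^*}(\pm h r_i) = h\,\sigma_{K^*}(\pm r_i)$, and $K^* \subset \tir{\Omega^*}$ is bounded, so $\sigma_{K^*}(\pm r_i) \le C_0$ uniformly. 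Hence each unit axis step changes $v_h$ by at most $C_0 h$. Then connect arbitrary $x, y \in \tir\Omega \cap \mathbb{Z}^d_h$ by a monotone axis-parallel lattice path inside $\tir\Omega$ (possible since $\Omega$ is convex, after adjusting the constant to account for the fact that the path may need to leave $\Omega_h$ but stays in $\tir\Omega \cap \mathbb Z^d_h$, where \eqref{lipsh-1} still applies through the extension): the number of steps is $\|x-y\|_1/h$, so telescoping yields $|v_h(x)-v_h(y)| \le C_0\,\|x-y\|_1$ with $C = C_0$ independent of $h$ and of $v_h$. One subtlety worth a remark: along such a path the intermediate points might not satisfy $k \ge l$ within a single maximal sub-segment, but \eqref{lipsh-1} applied segment-by-segment (each a maximal run in a fixed coordinate direction) suffices, and the absolute value comes from applying the two-sided bound to both orderings.
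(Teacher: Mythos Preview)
Your approach is essentially the same as the paper's: discrete convexity gives monotonicity of the forward differences $\delta_m$, the extension formula controls the extreme differences at the boundary, and the Lipschitz bound follows by telescoping along coordinate axes. The Lipschitz argument in particular matches the paper's almost line for line.

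Where you diverge is in the endpoint step, and here you are making things harder than necessary. The paper does not take the \emph{minimizing} $y \in \partial\Omega_h$ in \eqref{extension} and then compare candidates via subadditivity of $\sigma_{K^*}$. Instead it simply observes that if $k$ is chosen maximal so that $x+khe \in \Omega_h$ but $x+(k+1)he \notin \Omega_h$, then $x+khe$ itself lies in $\partial\Omega_h$ and is therefore an admissible $y$ in the minimum defining \eqref{extension}. Plugging $y = x+khe$ directly gives
\[
v_h(x+(k+1)he) \ \le\ v_h(x+khe) + \max_{1\le j\le N} (he)\cdot a_j^* \ =\ v_h(x+khe) + \sigma_{K^*}(he),
\]
and the lower bound is obtained symmetrically with the minimal $l$. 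This avoids the awkwardness you flag yourself, namely that $z-he = x+khe$ is an interior node and is \emph{not} given by the extension formula, so there is no ``comparing with the candidate $y'$ for the argument $z-he$'' to be done. Your workaround (propagate the bound from a genuinely exterior step back via monotonicity) is correct and is exactly what the paper's choice of maximal $k$ encodes, but the one-line version above is cleaner and does not require invoking subadditivity of the support function at all.
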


\begin{proof}
%

Let $x \in \Omega_h$ and $e \in V_{min}$. Since by assumption
$\Delta_{h e} v_h(x) \geq 0$, we have
$$
v_h(x+h e) - v_h(x) \geq v_h(x) - v_h(x-h e). 
$$
Therefore for integers $k$ and $l$ with $k \geq 0, l \leq 0$ such that $x+k h e$ and $x+l he$  are in $\Omega_h$
$$
v_h(x+(k+1) he) - v_h(x+k h e) \geq v_h(x+ l he) - v_h(x+(l-1) h e). 
$$
Let us now assume that $k$ and $e$ are such that $x+k h e \in \Omega_h$ but $x+(k+1) he \notin \Omega_h$. Then by definition, since $x+k he \in \partial \Omega_h$
\begin{align*}
v_h(x+(k+1) he) \leq  \max_{1 \leq j \leq N} h e \cdot a_j^* +v_h(x+k h e).
\end{align*}
It follows that
$$
v_h(x+(k+1) he)-v_h(x+k h e) \leq \max_{1 \leq j \leq N} h e \cdot a_j^*. 
$$
This can be written
$$
v_h(x+(k+1) he)-v_h(x+k h e) \leq k_{Y}(h e). 
$$
Assume now that $x+(l-1) h e \notin \Omega_h$ but $x+l h e \in \Omega_h$. Then
$$
v_h(x+(l-1) h e) \leq  \max_{1 \leq j \leq N} -h e \cdot a_j^* + v_h(x+ l he).
$$
It follows that
$$
 v_h(x+ l he) - v_h(x+(l-1) h e) \geq  -k_{Y}(-h e).
$$
In summary, for integers $k$ and $l$ with $k \geq 0, l \leq 0$ such that $x+k h e$ and $x+l he$  are in $\Omega_h$ \eqref{lipsh-1} holds.  
%

The proof of \eqref{Lipschitz} is given in \cite[Proposition 4.3 (5)]{benamou2017minimal}. Note that in \eqref{lipsh-1}, $x+(k+1) he$ and $x+(l-1) h e$ may not be in $\Omega_h$. Let now $x$ and $y$ in 
$\tir{\Omega} \cap \mathbb{Z}^d_h$ and put $y=x+\sum_{i=1}^d l_i h r_i$ where we recall that $\{ \, r_1,\ldots,r_d \, \}$ denotes the canonical basis of $\R^d$ and its elements are in $V(z)$ for all $z \in \Omega_h$ by assumption. Rewriting \eqref{lipsh-1} as
\begin{align*}
 -k_{Y}(-h e) & \leq v_h(x+ l he) - v_h(x+(l-1) h e)  \leq k_{Y}(h e) \\
 -k_{Y}(-h e) & \leq v_h(x+(k+1) he) - v_h(x+k h e)   \leq k_{Y}(h e),
\end{align*}
we see that if $l_i \geq 0$, we have $-l_i h k_{Y}(-r_i) \leq v_h(x+ l_i h r_i) - v_h(x) \leq l_i h k_{Y}(r_i)$ while when
$l_i \leq 0$, $- |l_i| h k_{Y}(-r_i)  \leq v_h(x) - v_h(x+ l_i h r_i) \leq |l_i| h k_{Y}(r_i)$. Therefore
\begin{equation} \label{temp-Lip01}
|v_h(x+ l_i he) - v_h(x)| \leq |l_i| h  \max \{\, | k_{Y}(-r_i)|, | k_{Y}(r_i)|  \, \}, 
\end{equation}
which gives
$$
|v_h(y) - v_h(x)| \leq h \sum_{i=1}^d |l_i| \max \{\, | k_{Y}(-r_i)|, | k_{Y}(r_i)|, i=1,\ldots,d  \, \}.
$$
The proof is complete. \Qed
\end{proof}

The next lemma describes how the discrete extension formula \eqref{extension} enforces the second boundary condition.

\begin{lemma} \label{inc-sub-lem}
Assume that  $\Delta_{he} v_h(x) \geq 0$ for all $x$ in $\Omega_h$ and $e \in V_{min} \subset V(x)$, with $v_h(x)$ for 
$x \notin \Omega_h$ given by \eqref{extension}. We have
$$
\partial_V v_h  (\Omega_h)  \subset Y.
$$
\end{lemma}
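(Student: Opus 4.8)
The plan is to prove the two inclusions separately, working from the definitions.

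\textbf{First inclusion: $\chi_{v_h}(\Omega_h) \subset \partial_V v_h(\Omega_h)$.} This one is essentially trivial and should come first. Fix $x \in \Omega_h$ and let $p \in \chi_{v_h}(x)$. By definition of the discrete normal image, $p \cdot e \geq v_h(x) - v_h(x-e)$ for all $e \in \mathbb{Z}^d_h$. In particular, since $V \subset \mathbb{Z}^d \setminus \{0\}$, for every $e \in V$ the vector $h e$ lies in $\mathbb{Z}^d_h$, so $p \cdot (he) \geq v_h(x) - v_h(x - he)$. That is exactly the condition defining membership in $\partial_V v_h(x)$, so $p \in \partial_V v_h(x) \subset \partial_V v_h(\Omega_h)$.

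\textbf{Second inclusion: $\partial_V v_h(\Omega_h) \subset K^*$.} This is the substantive part. Fix $x \in \Omega_h$ and $p \in \partial_V v_h(x)$; I want to show $p \in K^*$. The natural route is to use the support-function characterization of the convex polygon $K^*$: it suffices to verify $p \cdot \xi \leq \sigma_{K^*}(\xi)$ for all $\xi$ in a set of directions that determines $K^*$, e.g. the outward normals $n_1, \dots, n_N$ to the sides of $K^*$ (equivalently, all of $\mathbb{R}^d$, but the normals suffice since $K^* = \{z : z \cdot n_\ell \leq \sigma_{K^*}(n_\ell), \ell = 1,\dots,N\}$). By Assumption~2.33 (the stated assumption on $V$), $V$ contains a normal $n_\ell$ to each side of $K^*$ — and since $V$ has co-prime coordinates these are the primitive integer normals, which scale the support-function bound correctly. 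Now apply the definition of $\partial_V v_h(x)$ with $e = n_\ell \in V$: we get $p \cdot (h n_\ell) \geq v_h(x) - v_h(x - h n_\ell)$, and also with $-n_\ell \in V$ (by symmetry of $V$) we get $p \cdot (h n_\ell) \leq v_h(x + h n_\ell) - v_h(x)$. So $p \cdot n_\ell \leq \big(v_h(x + h n_\ell) - v_h(x)\big)/h$. The key is then to bound this forward-difference by $\sigma_{K^*}(n_\ell)$. This is precisely what the chain of inequalities \eqref{lipsh-1} in Lemma~\ref{pre-lip-lem} delivers: taking $l \leq k$ and letting $l$ run down to the boundary of $\Omega_h$ while $k$ indexes $x$ in direction $e = n_\ell$, the first-difference along direction $n_\ell$ at any interior step is at most $\sigma_{K^*}(h n_\ell) = h\,\sigma_{K^*}(n_\ell)$. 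Care is needed near the boundary: if $x + h n_\ell \notin \Omega_h$ then $v_h(x + h n_\ell)$ is the extension value \eqref{extension}, and one must check that the bound $v_h(x+hn_\ell) - v_h(x) \leq h\,\sigma_{K^*}(n_\ell)$ still holds — but this is built into \eqref{lipsh-1}, whose statement explicitly allows the outer increment $v_h(x+(k+1)he) - v_h(x+khe)$ to be an increment into the extension region, bounding it by $\sigma_{K^*}(he)$. Combining, $p \cdot n_\ell \leq \sigma_{K^*}(n_\ell)$ for all $\ell$, hence $p \in K^*$.

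\textbf{Expected main obstacle.} The monotonicity-of-differences lemma \eqref{lipsh-1} is doing all the real work, so the delicate point is making sure I invoke it in the right configuration — i.e., that for the direction $e = n_\ell$ and the interior point $x$, there genuinely exist indices realizing "$x + khe$" as $x$ itself and "$x+(k+1)he$" as the relevant (possibly exterior, extension-defined) neighbor, and that $\Delta_{he}v_h \geq 0$ holds along this line so that the chain of increments is monotone. Since $v_h \in \mathcal{C}_h$ (or at least satisfies $\Delta_{he}v_h(y)\geq 0$ for $y\in\Omega_h$, $e\in V$, which is the hypothesis of Lemma~\ref{inc-sub-lem}) and $n_\ell \in V$, this is exactly the setting of Lemma~\ref{pre-lip-lem}, so the obstacle is really just bookkeeping the boundary case rather than anything deep. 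A secondary subtlety worth a sentence: one should note that testing against the finitely many side-normals $n_\ell$ is enough to conclude $p \in K^*$ because $K^*$ is a convex polygon equal to the intersection of the half-planes $\{z \cdot n_\ell \leq \sigma_{K^*}(n_\ell)\}$.
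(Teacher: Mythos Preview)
Your proposal is correct and follows essentially the same approach as the paper: both use the symmetry of $V$ to turn the defining inequality of $\partial_V v_h(x)$ into $p\cdot(he)\leq v_h(x+he)-v_h(x)$, then bound this forward difference by $\sigma_{K^*}(he)$ via Lemma~\ref{pre-lip-lem}, and conclude $p\in K^*$ because $V$ contains the side-normals of $K^*$. The only cosmetic difference is that the paper invokes \eqref{lipsh-1} directly with $k=l=0$ (which already covers the case $x+he\notin\Omega_h$), making your ``running $l$ to the boundary'' discussion unnecessary, and the paper verifies $p\cdot e\leq\sigma_{K^*}(e)$ for all $e\in V$ rather than just the normals---but these are matters of presentation, not substance.
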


\begin{proof}
With $k=l=0$ in Lemma \ref{pre-lip-lem}, we obtain for $x \in \Omega_h$ and $e \in V_{min}$
\begin{equation} \label{pre-lip-lem-case}
 -k_{Y}(-h e) \leq v_h(x) - v_h(x- h e) \leq v_h(x+he)-v_h(x)  \leq k_{Y}(h e). 
\end{equation}
Let $p \in   \partial_V v_h  (x)$. Since for $e \in V_{min}$, $-e \in V_{min}$, we have
$p \cdot (- h e) \geq v_h(x) - v_h(x+ h e)$ for all $e \in V(x)$, that is
$$
p \cdot ( h e) \leq  v_h(x+ h e) - v_h(x) \leq k_{Y}(h e) = h k_{Y}( e) .
$$
This proves that $p \cdot  e \leq k_{Y}(e)$ for all $e \in V_{min}$. Since $V_{min}$ contains vectors parallel to the normals to facets of the polygon $Y$, we conclude that $p \in Y$ and thus $ \partial_V v_h(\Omega_h) \subset Y$.
The proof is complete. \Qed
\end{proof}

\section{Stability, uniqueness and existence} \label{an-scheme}

Adding a constant to a solution of  \eqref{m2d3} results in another solution. We will require that $v_h(x^1_h)=\alpha$ for an arbitrary number $\alpha$ and a mesh point $x^1_h$. Recall that $v_h$ is defined only at mesh points. We will assume that $x^1_h \to x^1$ for a point $x^1 \in \tir{\Omega}$.  

The stability of solutions is an immediate consequence of \eqref{Lipschitz}. 
\begin{theorem} \label{stability-thm}
Solutions $v_h \in \mathcal{C}_h$ of  \eqref{m2d3} with $v_h(x^1_h)=\alpha$ for an arbitrary number $\alpha$ and $x^1_h \in \Omega_h$, are bounded independently of $h$. 
\end{theorem}

\begin{proof} 
Since for $v_h \in \mathcal{C}_h$ and $x \in \Omega_h$, $\Delta_{h e} v_h (x) \geq 0$ for all $e \in V$,  $v_h$ is bounded independently of $h$ by \eqref{Lipschitz}. \Qed
\end{proof}


\begin{theorem} \label{unicity}
For $f>0$ in $\Omega$, solutions of the discrete problem \eqref{m2d3}  are unique up to an additive constant for $V=V_{max}$. 
\end{theorem}

\begin{proof} 
The proof is the same as 
the proof of uniqueness of a solution to \eqref{m1} in the class of convex polyhedra, i.e. when the right hand side is a sum of Dirac masses. See for example \cite[Theorem 17.2]{Bakelman1994} for a sketch of the proof for convex polyhedra. The proof therein requires non trivial Dirac masses, hence our assumption that $f>0$. 

We first note that if $u_h$ is a solution of \eqref{m2d3}, then $u_h + C$ is also a solution of \eqref{m2d3} for a constant $C$. Let $v_h$ and $w_h$ be two solutions of \eqref{m2d3}. We may assume that $v_h(x) \geq w_h(x)$ for all $x \in \Omega_h$, if necessary by adding a constant to $w_h$. Furthermore, we may also assume that there exists $x^1 \in \Omega_h$ such that $v_h(x^1)=w_h(x^1)$. For convenience, and by an abuse of notation, we do not mention the dependence of $x^1$ on $h$. To prove the existence of $x^1$, let $a=\min\{ \, v_h(x)-w_h(x), x \in \Omega_h\, \}$. Since $\Omega_h$ is finite, there is $x^1 \in \Omega_h$ such that $a=v_h(x^1)-w_h(x^1)$.
With $s_h(x)=w_h(x)+a$, we obtain $v_h(x)\geq s_h(x)$ for all $x \in \Omega_h$ with $v_h(x^1)=s_h(x^1)$.

It follows from \eqref{extension} that $v_h \geq w_h$ on $\mathbb{Z}^d_h$. We show that $v_h=w_h$ and hence any two solutions can only differ by a constant. 

Since $v_h(x^1)=w_h(x^1)$ and  $v_h(x) \geq w_h(x)$ for all $x \in \mathbb{Z}^d_h$, we have 
$\partial_V w_h(x^1) \subset \partial_V v_h(x^1)$. Next, we note that as $f>0$ in $\Omega$, $\partial_V w_h(x^1)$ is a non empty polygon with facets given by hyperplanes orthogonal to directions $e$ in a subset $\hat{V}$ of $V$. We consider a subset of $V$ because some faces may only intersect  $\partial_V w_h(x^1)$ at a vertex. 

If there is some $\hat{e} \in \hat{V}$ such that  $v_h (x^1+h \hat{e} ) > w_h (x^1+h \hat{e})$, then $\partial_V v_h(x^1)  \setminus \partial_V w_h(x^1)$ has non zero measure. Since $R >0$ on $\Omega^*$ and by Lemma \ref{inc-sub-lem} $\partial_V v_h(x^1) \subset \Omega^*$, and by assumption $\omega_a(R,v_h,\{\, x^1\, \})=\omega_a(R,w_h,\{\, x^1\, \})$, this is impossible from properties of the Lebesgue integral. 
We have proved that under the assumption that $v_h(x^1)=w_h(x^1)$ we must have  
$(v_h-w_h)(x^1\pm h e)=0  \ \forall e \in \hat{V}$. 

Let $P_1$ denote the convex hull of $x^1$ and the points $x^1 + h e, e \in \hat{V}$. By Lemma \ref{gamma-lem} below we have $v_h=\Gamma_2(v_h)$ on $\Omega_h$. Recall that $\Gamma_2(v_h)$ is a piecewise linear convex function. 
Also $w_h=\Gamma_2(w_h)$ on $\Omega_h$. Therefore $\Gamma_2(v_h)=\Gamma_2(w_h)$ on $\partial P_1$ with $\omega_a(R,\Gamma_2(v_h),\{\, x^1\, \})=\omega_a(R,\Gamma_2(w_h),\{\, x^1\, \})$. 
Because $\Gamma_2(v_h)$ and $\Gamma_2(w_h)$ are piecewise linear convex, by construction of $\hat{V}$, at all other points $x$ of $P_1$, we have $\omega_a(R,\Gamma_2(v_h),\{\, x\, \})=\omega_a(R,\Gamma_2(w_h),\{\, x\, \})=0$.
By unicity of the solution to the Dirichlet problem for the Monge-Amp\`ere equation \cite[Theorem 2.1]{Trudinger2008}, we obtain $\Gamma_2(v_h)=\Gamma_2(w_h)$ on $P_1$. Hence $v_h=w_h$ on $P_1\cap \Omega_h$. 

Next, we choose a point $x^2$ on $\partial P_1\cap \Omega_h$ and denote by $P_2$ the corresponding polygon. Repeating this process with points on $\partial P_{i-1}\cap \Omega_h, i>2$, we obtain a sequence of mesh points $x^i$ and associated polygons $P_i$ of non zero volumes on which $v_h=w_h$. 

Next, we observe that $\cup_i P_i = \Conv(\mathcal{N}_h^2)$ as the points $x^i$ are projections onto $\R^d$ of vertices on the lower part of the convex polygon which is the epigraph of $\Gamma_2(w_h)$ on $\Conv(\mathcal{N}_h^2)$. We conclude that $v_h=w_h$. \Qed
\end{proof}


\begin{lemma} \label{perturbation-lem}
Let $x^1 \in \Omega_h$ and $v_h$ be discrete convex with asymptotic cone $K$. Assume that 
$\omega_V(R, v_h, \{ \, x \, \}) >0$ for all $x \in \Omega_h$. Let $w_h$ and $q_h$ be defined on $\mathbb{Z}_h^d$ by
$w_h(x) = v_h(x)$ for $x \neq x^1, x \in \Omega_h$ and $w_h(x^1) = v_h(x^1) - \epsilon$, 
$q_h(x) = v_h(x)$ for $x \neq x^1, x \in \Omega_h$ and $q_h(x^1) = v_h(x^1) + \epsilon$. The values of $w_h$ and $q_h$ on $\mathbb{Z}_h^d \setminus \Omega_h$ are given by \eqref{extension}.
There exists $\epsilon_0 >0$ such that for 
$0 < \epsilon \leq \epsilon_0$, $w_h$ and $q_h$ are discrete convex with asymptotic cone $K$, 
$q_h \geq v_h \geq w_h$
on $\mathbb{Z}_h^d$. Moreover, if $x^1 \in \Omega_h \setminus \partial \Omega_h$, $\omega_V(R, w_h, \{ \, x^1 \, \} )>
\omega_V(R, v_h, \{ \, x^1 \, \} )>
\omega_V(R, q_h, \{ \, x^1 \, \} )$. 
\end{lemma}

\begin{proof} 
Let $\epsilon_1 = \min \{ \,
\Delta_{he} v_h(x), x \in \Omega_h, e \in V(x), x\pm he \in \mathcal{N}_h^2 \, \}$. We have $\epsilon_1>0$
since $\omega(R, v_h, \{ \, x \, \}) >0$ for all $x \in \Omega_h$. Otherwise, there would be $x_0 \in \Omega_h$ and a direction $e \in V(x_0)$ such that $\Delta_{h e} v_h(x_0)=0$. In that case, $\partial_V v_h(x_0)$ is contained in the hyperplane $p \cdot e = (v_h(x_0+h e)-v_h(x_0) )/h =(v_h(x_0)-v_h(x_0-h e))/h$, and hence $\omega_V(R,v_h,\{\, x_0 \,\}) =0$, a contradiction.  

Let $\epsilon>0$. We have  $\Delta_{he}w_h(x^1) =  \Delta_{he}v_h(x^1)+ 2 \epsilon \geq \epsilon_1 + 2 \epsilon$. 
We claim that $\Delta_{he}w_h(x) \geq \epsilon_1 - 2 \epsilon$ for all $x \in \Omega_h, x \neq x^1$. This is because, for $x \in \Omega_h$ and $e \in V(x)$, $w_h(x+he) \geq v_h(x+he) - \epsilon$. When $x+h e \in \Omega_h$, this follows from the definition of $w_h$. Assume that $x+h e \in \mathbb{Z}^d_h \setminus \Omega_h$ and put $\psi(s)=\max_{j=1,\ldots,N} (x+he - s)\cdot a_j^*$. Let $s_0 \in \partial \Omega_h$ such that $w_h(x+he) = w_h(s_0)+\psi(s_0)$. If $s_0=x^1$ and $v_h(x+he)=v_h(s_0)+\psi(s_0)$, we have
$w_h(x+he) = v_h(x+he) - \epsilon$. If $s_0=x^1$ and $v_h(x+he)=v_h(s_1)+\psi(s_1)$ for $s_1 \neq s_0$, then by definition $v_h(s_0)+\psi(s_0) \geq v_h(x+he)$ and thus $w_h(x+he)=v_h(s_0)-\epsilon+\psi(s_0) \geq v_h(x+he)-\epsilon$. When $s_0 \neq x^1$ we have $w_h(x+he)=v_h(s_0)+\psi(s_0) \geq v_h(x+he)$. 
This proves the claim when $x+h e \in \mathbb{Z}^d_h \setminus \Omega_h$. 

With a similar argument, we have $\Delta_{he}q_h(x^1) =  \Delta_{he}v_h(x^1)- 2 \epsilon \geq \epsilon_1 - 2 \epsilon$
and $\Delta_{he}q_h(x) \geq \epsilon_1$ for all $x \in \Omega_h, x \neq x^1$.

We have  $\Delta_{he}w_h(x) \geq \epsilon_1 - 2 \epsilon$ for all $x \in \Omega_h$. We conclude that for $\epsilon \leq \epsilon_1/2$, $w_h$ is discrete convex. By construction $w_h$ has asymptotic cone $K$. Similarly, $\Delta_{he}q_h(x) \geq \epsilon_1 - 2 \epsilon$ for all $x \in \Omega_h$. So, for $\epsilon \leq \epsilon_1/2$, $q_h$ is discrete convex with asymptotic cone $K$.

It is immediate that $q_h \geq v_h \geq w_h$
on $\mathbb{Z}_h^d$.  Let $\{ \, e_1,\ldots, e_m \, \} \subset \mathbb{Z}^d$ denote a set of normals to the facets of $\partial_V q_h(x^1)$  
and let $\{ \, s_1,\ldots, s_n \, \} \subset \mathbb{Z}^d$ denote a set of normals to the facets of $\partial_V w_h(x^1)$. 
By construction of $\partial_V v_h(x^1)$, $\{ \, e_1,\ldots, e_m \, \} \subset V(x^1)$. Similarly $\{ \, s_1,\ldots, s_n \, \} \subset V(x^1)$.
When $x^1 \in \Omega_h \setminus \partial \Omega_h$, we get $v_h(x)=w_h(x)=q_h(x)$ for $x \neq x^1$. Thus
\begin{align*}
\partial_V w_h(x^1) = 
\{ \, p \in \R^d, p \cdot (h s_j) \leq w_h(x^1 + h s_j) - w_h (x^1), j=1,\ldots,n \, \} \\
=  \{ \, p \in \R^d, p \cdot (h s_j) \leq v_h(x^1 + h s_j) - v_h (x^1) + \epsilon, j=1,\ldots,n \, \} \\
\supsetneq \{ \, p \in \R^d, p \cdot (h s_j) \leq v_h(x^1 + h s_j) - v_h (x^1), j=1,\ldots,n \, \} \\
\supset  \{ \, p \in \R^d, p \cdot (h e) \leq v_h(x^1 + h e) - v_h (x^1), \forall e \in V(x^1) \, \}  = 
\partial_V v_h(x^1).
\end{align*}
We conclude that $|\partial_V w_h(x^1)|  > |\partial_V v_h(x^1)|$. Similarly, 
\begin{multline*}
\partial_V q_h(x^1) = \{ \, p \in \R^d, p \cdot (h e_i) \leq q_h(x^1 + h e_i) - q_h (x^1), i=1,\ldots,m \, \}. 
\end{multline*}
This gives $\partial_V q_h(x^1) =  \{ \, p \in \R^d, p \cdot (h e_i) \leq v_h(x^1 + h e_i) - v_h (x^1) - \epsilon, i=1,\ldots,m \, \} \subsetneq   \{ \, p \in \R^d, p \cdot (h e_i) \leq v_h(x^1 + h e_i) - v_h (x^1), i=1,\ldots,m \, \} = \partial_V v_h(x^1)$. 
This implies $|\partial_V q_h(x^1)|  < |\partial_V v_h(x^1)|$. The proof is complete with $\epsilon_0 = \epsilon_1/2$. \Qed
\end{proof}

When $V \neq V_{max}$, it may be necessary to have additional requirements for uniqueness. 
Let $u_h$ be a solution of \eqref{m2d3} and let us assume that we have $\mathcal{N}_h^2 = \mathcal{N}_{h,a}^2 \cup \mathcal{N}_{h,b}^2$ with  $\mathcal{N}_{h,a}^2 \cap \mathcal{N}_{h,b}^2=\emptyset$. Assume furthermore that for $x \in \Omega_h \cap \mathcal{N}_{h,a}^2$, and $e \in V(x)$ such that $e$ is a normal to a facet of $\partial_V u_h(x)$, we have $x + h e \in \mathcal{N}_{h,a}^2$. A similar requirement is made for $x \in \Omega_h \cap \mathcal{N}_{h,b}^2$. Then, adding a constant to $u_h$ on $\mathcal{N}_{h,b}^2$ may result in another solution. 

In the next theorem, we observe that when $V \neq V_{max}$, if $u_h$ and $v_h$ are solutions and $u_h$ is not equal to $v_h$ up to a constant, it is not possible to have $u_h\geq v_h$ up to a constant with equality only at one point. 

\begin{theorem} \label{unicity2} 
Assume that $f>0$ in $\Omega$ and $V_{min} \subset V \subset V_{max}$. Let $u_h$ and $v_h$ be two solutions of 
the discrete problem \eqref{m2d3} such that up to a constant added to $u_h$, we have $u_h \geq v_h$ on $\Omega_h$. 
Then it is not possible to have equality up to a constant only at one point $x^1 \in \Omega_h \setminus \partial \Omega_h$. If in addition  $V(x) = V_{max}(x)$ for all $x \in \partial \Omega_h$, then it is not possible to have equality up to a constant only at one point $x^1 \in \partial \Omega_h$. 
\end{theorem}

\begin{proof} Let $u_h$ and $v_h$ be two 
mesh functions which are discrete convex with asymptotic cone $K$. 

{\bf Part 1} Assume that there exists $z \in \Omega_h$ such that $u_h(x) - v_h(x) \geq u_h(z) - v_h(z)$ for all $x \in \Omega_h$. We prove that $ \omega_V(R, u_h,\{\, z\, \})  \geq  \omega_V(R, v_h,\{\, z\, \}) $. 

We claim that for $x \notin \Omega_h$ we have $u_h(x) - v_h(x) \geq u_h(z) - v_h(z)$. Let $y_1$ and $y_2$ in $\partial \Omega_h$ such that $u_h(x) = u_h(y_1) + k_Y(x-y_1)$ and $v_h(x) = v_h(y_2) + k_Y(x-y_2)$. We have by definition of $v_h(x)$, $v_h(y_2) + k_Y(x-y_2) \leq v_h(y_1) + k_Y(x-y_1)$. Moreover
\begin{align*}
u_h(x) - v_h(x) &= u_h(y_1) - v_h(y_2) + k_Y(x-y_1) - k_Y(x-y_2) \\
& \geq u_h(y_1) - v_h(y_2)  + v_h(y_2)  - v_h(y_1)  = u_h(y_1) -  v_h(y_1) \\
& \geq u_h(z) - v_h(z),
\end{align*}
since $\partial \Omega_h \subset \Omega_h$.

Next, for $e \in V(z)$, we have 
\begin{align*}
u_h(z+ h e) - v_h(z+ h e)  \geq u_h(z) - v_h(z),
\end{align*}
and thus for $p \in \partial_V v_h (z)$
\begin{align*}
u_h(z+ h e) & \geq v_h(z+ h e)  + u_h(z) - v_h(z)  
\geq v_h(z) + p \cdot (h e) + u_h(z) - v_h(z)  \\
& = u_h(z)  + p \cdot (h e), 
\end{align*}
which shows that $p \in  \partial_V u_h (z)$. This proves the claim.

{\bf Part 2}  Let $\epsilon >0$. Assume now that $u_h$ and $v_h$ are two solutions of \eqref{m2d3}. For all $x \in \Omega_h$
$$
\omega_V(R, u_h, \{ \, x \, \}) = \omega_V(R, v_h, \{ \, x \, \} )>0.
$$
As in the proof of Theorem \ref{unicity}, we may assume that $u_h(x) \geq v_h(x)$ for all $x \in \Omega_h$ with  
$u_h(x^1) = v_h(x^1)$ for some $x^1 \in \Omega_h$. By assumption, for $x \in \Omega_h$ and $x \neq x^1$, $u_h(x)>v_h(x)$.

We consider the case that $x^1 \in \Omega_h \setminus \partial \Omega_h$ so that we can use Lemma \ref{perturbation-lem}.  Let $w_h$ denote the perturbation of $v_h$ constructed in Lemma \ref{perturbation-lem} with $\epsilon$. We have 
 $$
 \omega_V(R, w_h, \{ \, x^1 \, \}) >
\omega_V(R, v_h, \{ \, x^1 \, \}).
$$
Let $q_h$ denote the perturbation of $u_h$ constructed in Lemma \ref{perturbation-lem}. We have 
 $$
 \omega_V(R, u_h, \{ \, x^1 \, \}) >
 \omega_V(R, q_h, \{ \, x^1 \, \}). 
$$
Since $\omega_V(R, u_h, \{ \, x^1 \, \}) = \omega_V(R, v_h, \{ \, x^1 \, \} )$, we obtain
\begin{equation} \label{unicity2-con01}
 \omega_V(R, w_h, \{ \, x^1 \, \})  >  \omega_V(R, q_h, \{ \, x^1 \, \}). 
\end{equation}
Recall that for $\epsilon$ sufficiently small, both $w_h$ and $q_h$ are discrete convex with asymptotic cone $K$. Assume that $u_h \neq v_h$ and choose $\epsilon$ sufficiently small such that
$$
2 \epsilon < \min \{ \, u_h(x) - v_h (x): x \in \Omega_h, u_h(x) > v_h(x) \, \}. 
$$
We have $q_h \geq u_h \geq v_h \geq w_h$ on $\Omega_h$. Moreover, using $u_h(x^1) = v_h(x^1)$,
$$
q_h(x^1)-w_h(x^1)= u_h(x^1) + \epsilon - (v_h(x^1) - \epsilon) = 2  \epsilon.
$$
In addition, for $x \neq x^1, x \in \Omega_h$, $q_h(x)-w_h(x)=u_h(x)-v_h(x) \geq 2  \epsilon = q_h(x^1)-w_h(x^1)$. 
Therefore,
$q_h - w_h$ has a minimum at $x^1$ and are both discrete convex with asymptotic cone $K$. From Part 1, we conclude that $ \omega_V(R, q_h,\{\, x^1\, \})  \geq  \omega_V(R, w_h,\{\, x^1\, \}) $. This contradicts \eqref{unicity2-con01}. We conclude that $u_h=v_h$ at more than one point.

{\bf Part 3} Next, we consider the case that $x^1 \in \partial \Omega_h$.  By the assumption that $V(x) = V_{max}(x)$ for all $x \in \partial \Omega_h$, $f>0$ on $\Omega_h$ and Theorem \ref{from-out}, we have $\Gamma_2(v_h)(x^1)=v_h(x^1)=u_h(x^1)=\Gamma_1(v_h)(x^1)$. As in the proof of Theorem \ref{unicity}, we consider the convex decomposition  $\cup_{i=1}^n P_i = \Conv(\mathcal{N}_h^2)$ associated with $\Gamma_2(v_h)$ with $x^1 \in P_1$. Here $P_1$ is the convex hull of $x^1$ and the points $x^1\pm h e, e \in \hat{V}$, where $\hat{V} \subset V(x^1)$ denotes the set of normals to the facets of $\partial_{V_{max}} v_h(x^1)$. As in the proof of Theorem \ref{unicity} we get 
$(u_h-v_h)(x^1\pm h e)=0  \ \forall e \in \hat{V}$. 

By assumption, for $x \in \Omega_h$ and $x \neq x^1$, $u_h(x)>v_h(x)$. Therefore for $ e \in \hat{V}$, $x+h e \notin \Omega_h$. By assuming that $h$ is sufficiently small or the domain $\Omega$ is large relative to the size of $e \in \hat{V}$, we conclude that all points $x+ h e$ for $e \in \hat{V}$ are in the same closed half-space. But the set of normals to the facets of a polygon cannot all lie in the same half-space, as a consequence of \cite[Proposition 1]{klain2004minkowski}. That is, if $a_e$ denotes the volume of the facet of $\partial_{V_{max}} v_h(x^1)$ with normal $e$, $\sum_{e \in \hat{V}} a_e e =0$. If for a unit vector $w$, we have $w \cdot e \geq 0$ for all 
$e \in \hat{V}$, then $\sum_{e \in \hat{V}} a_e w \cdot e =0$ and hence $w \cdot e=0$ for all $e \in \hat{V}$. Since the set of normals to the facets of the non degenerate polygon $\partial_{V_{max}} v_h(x^1)$ span $\R^d$, we obtain $w=0$. 
Contradiction.  We conclude that $u_h=v_h$ at more than one point.
\Qed
\end{proof}





The proof of existence of a solution to \eqref{m2d3} in the case $V=V_{max}$ is 
identical to 
the case of convex polygonal approximations \cite[Theorem 17.2]{Bakelman1994}. 

\begin{lemma} \label{cont-integral} 
Let $v_h^k$ be a sequence of discrete convex mesh functions with asymptotic cone $K$ such that $v_h^k(x) \to v_h(x)$ for all $x$ in $\Omega_h$. Then $v_h$ is discrete convex with asymptotic cone $K$ and for all $x \in \Omega_h$, 
$\omega_V(R,v_h^k,\{\, x\, \}) \to \omega_V(R,v_h,\{\, x\, \})$.

\end{lemma}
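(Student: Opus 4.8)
The plan is to handle the two assertions separately. For the first, I would upgrade the hypothesis from convergence on $\Omega_h$ to convergence on all of $\mathbb Z^d_h$: since $\partial\Omega_h\subset\Omega_h$, the extension formula \eqref{extension} writes $v_h^k(z)$, for $z\notin\Omega_h$, as a minimum of finitely many affine functions of the values $\{\,v_h^k(y):y\in\partial\Omega_h\,\}$, hence as a continuous function of those finitely many boundary values. Defining $v_h$ on $\mathbb Z^d_h\setminus\Omega_h$ by \eqref{extension} from its values on $\Omega_h$, one gets $v_h^k(z)\to v_h(z)$ for every $z\in\mathbb Z^d_h$. Passing to the limit in $\Delta_{he}v_h^k(x)\ge 0$ for $x\in\Omega_h$, $e\in V$, then gives $\Delta_{he}v_h(x)\ge 0$; as the values of $v_h$ outside $\Omega_h$ come from \eqref{extension}, Definition \ref{def-as-cone} shows $v_h$ is discrete convex with asymptotic cone $K$.

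For the convergence of the discrete $R$-curvature, fix $x\in\Omega_h$ and write $\partial_V v_h^k(x)=\bigcap_{e\in V}\{\,p\in\R^d:p\cdot(he)\ge c_e^{k}\,\}$ and $P:=\partial_V v_h(x)=\bigcap_{e\in V}\{\,p\in\R^d:p\cdot(he)\ge c_e\,\}$, where the offsets $c_e^{k}=v_h^k(x)-v_h^k(x-he)$ converge to $c_e=v_h(x)-v_h(x-he)$ by the previous step. By Lemma \ref{inc-sub-lem}, all of these polyhedra are contained in the fixed bounded set $K^*$. If $p$ is interior to $P$ then $p\cdot(he)>c_e$ strictly for every $e\in V$, so $p\in\partial_V v_h^k(x)$ for all large $k$; if $p\notin P$ then some constraint is strictly violated, so $p\notin\partial_V v_h^k(x)$ for all large $k$. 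Hence $\mathbf 1_{\partial_V v_h^k(x)}\to\mathbf 1_{P}$ pointwise off the topological boundary $\partial P$, which is Lebesgue null since $P$ is a bounded convex set (in the degenerate case $P$ lies in a hyperplane and $\partial P\subset\tir P=P$ is itself null). Dominating by $\mathbf 1_{K^*}R\in L^1(\R^d)$, the dominated convergence theorem yields $\omega_a(R,v_h^k,\{\,x\,\})=\int_{\partial_V v_h^k(x)}R(p)\ud p\to\int_P R(p)\ud p=\omega_a(R,v_h,\{\,x\,\})$.

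The main obstacle is this last, measure-theoretic step. Because $R$ is only assumed locally integrable on $\Omega^*$, one must first be sure it is integrable over the region in which all the polyhedra sit; this is exactly what Lemma \ref{inc-sub-lem} (furnishing the common bounded ambient set $K^*$) together with the mass-conservation identity \eqref{necessary2} supply. One must also take care of the case in which $\partial_V v_h(x)$ is empty or lower-dimensional — so that the target integral is $0$ and one has to argue that the approximating integrals vanish in the limit — which is subsumed in the observation that the topological boundary of \emph{any} bounded convex set has Lebesgue measure zero. Everything else (continuity of the minimum, stability of the half-space description, convergence of the second differences) is routine.
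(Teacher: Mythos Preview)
Your argument is correct. For the first assertion, your observation that the extension formula \eqref{extension} expresses each off-grid value as a continuous (min--max) function of the finitely many boundary values is cleaner than the paper's subsequence extraction, though both reach the same conclusion.

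For the convergence of $\omega_a(R,v_h^k,\{\,x\,\})$ you take a genuinely different route from the paper. The paper works through convex envelopes: it shows that $\Gamma_2(v_h^k)$ converges uniformly on compact subsets of $\Omega$ to $\Gamma_2(v_h)$, invokes Theorem \ref{from-out} to identify $\omega_a(R,v_h^k,\{\,x\,\})$ with $\omega(R,\Gamma_2(v_h^k),\{\,x\,\})$, and then appeals to the weak convergence of Monge--Amp\`ere measures for uniformly convergent convex functions. Your proof instead argues directly from the half-space description of $\partial_V v_h^k(x)$: the offsets $c_e^k$ converge, the indicator functions converge pointwise off the null set $\partial P$, and dominated convergence with majorant $\mathbf 1_{K^*}R\in L^1$ (furnished by Lemma \ref{inc-sub-lem} and \eqref{necessary2}) finishes the job. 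Your approach is more elementary and entirely self-contained---indeed the paper itself remarks, immediately after its proof, that the statement ``can also be proven from the continuity of the mapping $v_h\mapsto\int_{\partial_V v_h(x)}R(p)\,dp$,'' which is exactly what you do. The paper's route has the advantage of fitting into the weak-convergence framework used later in Section \ref{cvg}, but yours requires no detour through $\Gamma_2$.
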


\begin{proof}
Let $x \in \mathbb{Z}^d_h \setminus \Omega_h$ and assume that 
$v_h^k(x)=v_h(s_k) + (x-s_k) \cdot a^*_k$ for some $s_k \in \partial \Omega_h$ and a vertex $a^*_k$ of $Y$. Here $\max_{j=1,\ldots,N}  (x-s_k) \cdot a^*_j= (x-s_k) \cdot a^*_k$. Since $\Omega_h$ is finite, up to a subsequence, we obtain for $k$ sufficiently large, $s_k=s \in \partial \Omega_h$ and $a^*_k=a^*$ for a vertex $a^*$ of $Y$. We thus have $v_h(x)=v_h(s) + (x-s) \cdot a^*$ with $\max_{j=1,\ldots,N}  (x-s) \cdot a^*_j= (x-s) \cdot a^*$. Hence 
$v_h^k(x) \to v_h(x)$ for all $x \in \mathbb{Z}^d_h$ and 
$v_h$ has asymptotic cone $K$.

By a similar argument, if for $x\in \Omega_h$ and $e \in V$, $\Delta_{he} v_h^k(x)\geq 0$, then $\Delta_{he} v_h(x)\geq 0$.

We now prove that for all $x \in \Omega_h$, 
$\omega_a(R,v_h^k,\{\, x\, \}) \to \omega_a(R,v_h,\{\, x\, \})$. 
We have for $x \in \Omega_h$
\begin{multline*}
\int_{\partial_V v_h^k(x)} R(p) dp - \int_{\partial_V v_h(x)} R(p) dp  \\ = \int_{\partial_V v_h^k(x) \setminus \partial_V v_h(x) } R(p) dp -  \int_{\partial_V v_h(x) \setminus \partial_V v_h^k(x) } R(p) dp.
\end{multline*}

If $p \in \partial_V v_h^k(x) \setminus \partial_V v_h(x)$ there exists $e \in V$ such that 
$$
v_h(x+he)  - v_h(x)  < p \cdot (h e) \leq v_h^k(x+he) - v_h^k(x). 
$$
Put $\alpha = v_h(x+he)  - v_h(x)$ and $\beta = v_h^k(x+he) - v_h^k(x)$. We have
$|p \cdot (h e) -(\alpha+\beta)/2 | \leq \beta-\alpha$. As $k \to \infty$, $\beta \to \alpha$. 
Therefore, given $\delta>0$, there exists $k_0$ such that for all $k \geq k_0$, $|p \cdot (h e) -\alpha |\leq \delta$, where we used $\alpha=(\alpha+\beta)/2-(\beta-\alpha)/2$. This also gives $|p \cdot (-h e) - (-\alpha) |\leq \delta$. 


Recall that $\partial_V v_h^k(x) \subset Y$ is bounded. We conclude that there is a constant $C$ which depends on $e$ and $h$ such that
$| \partial_V v_h^k(x) \setminus \partial_V v_h(x) | \leq C \delta$. Since $R$ is integrable, there exists $\delta>0$ such that if $|S| < C \delta$, we have
$\int_S R(p) dp < \epsilon/2$. It follows that
$\big| \int_{\partial_V v_h^k(x) \setminus \partial_V v_h(x) } R(p) dp \big| < \epsilon/2$ for $k \geq k_0$.

With a similar argument, we have $\big| \int_{\partial_V v_h(x) \setminus \partial_V v_h^k(x) } R(p) dp \big| < \epsilon/2$ for $k \geq k_1$ for an integer $k_1$. This proves that for $k \geq \max\{ \, k_0, k_1 \, \}$,
$|\omega_V(R,v_h^k,\{\, x\, \}) - \omega_V(R,v_h,\{\, x\, \})| < \epsilon$ and completes the proof.  \Qed
\end{proof}

The last statement of the above lemma can also be proven from the continuity of the mapping $v_h \mapsto \int_{\partial_V v_h(x)} R(p) dp$, c.f. for example \cite[Proposition 2.3]{kitagawa2016newton}. 

\begin{definition} \cite[Section 2.2]{Yau2013} \label{def-conv-subd}
A convex subdivision $\mathcal{T}$ of a convex polyhedron $P$ is a subdivision of $P$ into convex polyhedra $K$, also called cells, such that
\begin{itemize}
\item $\cup_{K \in \mathcal{T}} K = P$
\item  if $K$ and $L$ are both in $\mathcal{T}$, then so is their intersection
\item if $K \in \mathcal{T}$ and $L \subset K$, then $L \in \mathcal{T}$ if and only if $L$ is a face of $K$.
\end{itemize}
\end{definition}
Associated to the piecewise linear convex function $
u(x) = \max\{ x \cdot p_i  + h_i: i=1,\ldots, M\},
$
where $p_i \in \R^d$, $h_i \in \R$ for all $i$, is a convex subdivision of $\R^d$ whose top dimensional cells are given by
$$
W_i = \{ \, x \in \R^d, x \cdot p_i  + h_i \geq x \cdot p_j  + h_j, j=1,\ldots, M
\, \},
$$
for $i=1,\ldots,M$.

\begin{remark} The proof of existence of a solution in the case $V=V_{max}$, given below, uses the convex subdivision of a piecewise linear convex function. For $V$ not necessary equal to $V_{max}$, the proof of convergence of a damped Newton's method for solving \eqref{m2d3} given in \cite{Awanou-damped} also gives existence of a solution to  \eqref{m2d3}. 
Therein, a subsequence of the damped Newton's iterations is shown to converge to a solution. If the problem is known to have a unique solution, then the whole sequence converges to the unique solution.  \end{remark}

\begin{theorem} \label{existence}
There exists a solution to \eqref{m2d3} for $f>0$ on $\Omega$ and $V=V_{max}$.
\end{theorem}

\begin{proof}
Let $\Omega_h=\cup_{i=1}^M \{ \, x^i \, \}$ and $\mu_i=\int_{E_{x^i}} \tilde{f}(x) dx, i=1,\ldots,M$.
Let $\mathcal{A}$ denote the set of discrete convex mesh functions on $\Omega_h$ with asymptotic cone $K$ 
such that for $v_h \in \mathcal{A}$, $v_h(x^1)=\alpha$ for $\alpha \in \R$ and $0 \leq \omega_a(R,v_h,\{\, x^i\, \})\leq \mu_i, i=2,\ldots,M$ with $\omega_a(R,v_h,\{\, x^1\, \})=\int_{Y} R(p) dp - \sum_{i=2}^M \omega_a(R,v_h,\{\, x^i\, \})$.

The set $\mathcal{A}$ is not empty since $v_h$ given by the restriction to $\Omega_h$ of $k_{(x^1,\alpha)}(x):=\alpha+\max_{j=1,\ldots,N} a^*_j \cdot (x-x^1)$ is in $\mathcal{A}$. Note that $k_{(x^1,\alpha)}$ is a piecewise linear convex function with only one vertex $(x^1,\alpha)$, c.f. section \ref{revisited}, and $\partial k_{(x^1,\alpha)} (x^1) = \partial k_{(x^1,\alpha)} (\R^d) = Y$. We then observe that $\Gamma_2(v_h)=k_{(x^1,\alpha)}$ \cite[Theorem 3]{awanou2019uweakcvg} and by Theorem \ref{from-out}, up to a set of measure 0, $\partial_{V_{max}} v_h (x^1) = \partial k_{(x^1,\alpha)} (x^1)  $.
Next, we consider the mapping
$
L: \R^M \to \mathcal{A}
$
defined by $L(\zeta)=v_h$ with $v_h$ defined by $v_h(x^i)=\zeta_i, i=1,\ldots,M$ and $\zeta=(\zeta_i)_{i=1,\ldots,M}$. The mapping $L$ is a bijection and we put $\mathbb{A}=L^{-1}\mathcal{A}$. 

We claim that $\mathbb{A}$ is a compact subset of $\R^M$. 
Let $\zeta^k \in \mathbb{A}, k \geq 1$ such that 
$\zeta^k \to \zeta$ and put $v_h^k=L(\zeta^k)$. By assumption, $\zeta^k_1=\alpha$ for all $k$. Thus $\zeta_1=\alpha$. It follows from Lemma \ref{cont-integral} that the set $\mathbb{A}$ is closed. By Lemma \ref{pre-lip-lem} and \eqref{Lipschitz}, for all $\zeta \in \mathbb{A}$ and $v_h=L(\zeta)$ we have $|v_h(x_i)| \leq C, i=1,\ldots,d$ and $C$ is independent of $i$. Thus $\mathbb{A}$ is bounded. We conclude that $\mathbb{A}$ is a compact subset of $\R^d$.

Define $\mathcal{F}: \R^M \to \R$ by $\mathcal{F}(\zeta) = \sum_{i=1}^M \zeta_i$. Since $\mathbb{A}$ is compact, $\mathcal{F}$ has a minimum $f_0$ at some $\zeta^0 \in \mathbb{A}$. Put $L(\zeta^0) = v_h^0$. We show that $v_h^0$ solves \eqref{m2d3}. 

Assume that $v_h^0$ does not solve \eqref{m2d3}. Since $\omega_a(R,v_h^0,\{\, x^i\, \})\leq \mu_i, i=2,\ldots,M$ we must have for some $l \in \{ \, 2\ldots, M\, \}$, $\omega_a(R,v_h^0,\{\, x^l\, \}) < \mu_l$. 
Define $\hat{v}_h$ by
$$
\hat{v}_h(x^i) = v_h^0(x^i), i \neq l \text{ and } \hat{v}_h(x^l) = v_h^0(x^l) - \epsilon,
$$
for $\epsilon>0$. The values of $\hat{v}_h$ on $\mathbb{Z}^d_h \setminus \Omega_h$ are given by \eqref{extension}. 

We have $\mathcal{F}(\hat{v}_h)= f_0 - \epsilon$. We show that for $\epsilon$ sufficiently small $\hat{v}_h \in \mathbb{A}$ and hence this yields a contradiction and concludes the proof. By construction $\hat{v}_h(x^1)=\alpha$ and 
by Lemma \ref{perturbation-lem}, $\hat{v}_h$  is discrete convex with asymptotic cone $K$ for $\epsilon \leq \epsilon_0$ and $\epsilon_0>0$.

For $i \neq l$ and $i \geq 2$ we have
$\omega_a(R,\hat{v}_h,\{\, x^i\, \}) \leq \omega_a(R, v_h^0,\{\, x^i\, \}) \leq \mu_i$. Arguing as in Lemma \ref{perturbation-lem}  we have $\omega_a(R,\hat{v}_h,\{\, x^l\, \}) \geq \omega_a(R, v_h^0 ,\{\, x^l\, \})$ and using Lemma \ref{cont-integral}, for $\epsilon$ sufficiently small we obtain $\omega_a(R, v_h^0 ,\{\, x^l\, \}) \leq \omega_a(R,\hat{v}_h,\{\, x^l\, \}) <  \mu_l$. 

Finally, by Lemma \ref{inc-sub-lem}, $\partial_V v_h^0(\Omega_h) \subset Y$ and 
$\sum_{i=1}^M \omega_a(R,v_h^0,\{\, x^i\, \})=\int_{Y} R(p) dp$ by assumption. Therefore $\partial_V v_h^0(\Omega_h) = Y$ since $\partial_V v_h^0(\Omega_h)$ is a union of polygons. Also, by Lemma \ref{inc-sub-lem}, $\partial_V \hat{v}_h(\Omega_h) \subset Y$. We claim that $\partial_V \hat{v}_h(\Omega_h) = Y$. 

Let $p \in Y$ and assume that $p \in \partial_V v_h^0(x), x \in \Omega_h$. If $x=x^l$, then
$p \in \partial_V v_h^0(x) \subset \partial_V \hat{v}_h(x) \subset \partial_V \hat{v}_h(\Omega_h)$. If $x\neq x^l$, we have either $p \in  \partial_V \hat{v}_h(x) \subset \partial_V \hat{v}_h(\Omega_h)$ or 
$p \notin \partial_V \hat{v}_h(x)$. Assume that $p \notin \partial_V \hat{v}_h(x)$. We show that $p \in  \partial_V \hat{v}_h(x^l)$. Since $p \notin \partial_V \hat{v}_h(x)$, $\exists$  $\hat{e} \in V$ such that 
$$
\hat{v}_h(x+h \hat{e}) - \hat{v}_h(x) < p \cdot (h \hat{e}).
$$
We must have $x+h \hat{e}=x^l$. Otherwise, as $x\neq x^l$, we would have $\hat{v}_h(x+h \hat{e})=v_h^0(x+h \hat{e})$ and $\hat{v}_h(x) = v_h^0(x)$, thus a contradiction with $p \in \partial_V v_h^0(x)$.
Thus 
\begin{equation} \label{p-x-l}
p \cdot (-h \hat{e})<\hat{v}_h(x^l-h \hat{e})-\hat{v}_h(x^l).
\end{equation}
Since $p \in  \partial_V v_h^0(x)$, we have for all $e \in V$,  $p\cdot(h e + h  \hat{e}) \leq 
v_h^0(x+h e + h  \hat{e}) - v_h^0(x)=v_h^0(x^l + h  e) - v_h^0(x^l-h \hat{e})$. This gives by \eqref{p-x-l}
$p \cdot (h e) \leq \hat{v}_h(x^l+h e)-\hat{v}_h(x^l)$ where we also used $x^l+h e \neq x^l$. We conclude that $p \in  \partial_V \hat{v}_h(x^l)$ and $Y \subset \partial_V \hat{v}_h(\Omega_h)$. As a consequence $\omega_a(R,\hat{v}_h,\{\, x^1\, \})=\int_{Y} R(p) dp - \sum_{i=2}^M \omega_a(R,\hat{v}_h,\{\, x^i\, \})$. Here, we use the observation that for $x \in \Omega_h$, $\omega_a(R,\hat{v}_h,\{\, x \, \})=\omega_a(R,\Gamma_2(\hat{v}_h),\{\, x \, \})$ and for $x, y \in \Omega_h$ with $x\neq y$, $\partial \Gamma_2(\hat{v}_h)(x) \cap \partial \Gamma_2(\hat{v}_h)(y)$
is a set of measure 0. This concludes the proof that $\hat{v}_h \in \mathbb{A}$. \Qed
\end{proof}


\section{Asymptotic cone of convex sets} \label{asym}

In this section we first review the geometric notion of asymptotic cone and give an analytical formula, with a geometric interpretation, for the extension to $\R^d$ of a convex function on a polygon $\Omega$,  in such a way that it has a prescribed behavior at infinity, i.e. a prescribed asymptotic cone. The prescribed asymptotic cone will be constructed from a polygon $Y$ which approximates the domain $\Omega^*$ appearing in the second boundary condition. We will use the term polygon to also refer to a polygonal domain. 
Figure \ref{cone-fig} taken from \cite{Awanou-ams} illustrates the results discussed in this section. 
Using the notion of asymptotic cone we reformulate the second boundary condition. This allows to prove more results about convex extensions.

\subsection{Asymptotic cones} \label{epigraph}

We will use the notation $\R^{d+1}$ for a set of points and for a vector space over $\R$ endowed with the operations of scalar multiplication and addition. This makes $\R^{d+1}$ a Euclidean space with associated vector space $\R^{d+1}$. When emphasizing the geometric nature of some of the notions discussed below, we will use capital letters for points in the Euclidean space $\R^{d+1}$ and lower case letters for vectors. Thus we have a mapping $\R^{d+1} \times \R^{d+1} \to \R^{d+1}$ which maps $(P,e)$ to $P+e$. We will use the notation $O$ for the origin in $\R^{d+1}$. If $Q=P+e$ we write $e= \overrightarrow{PQ}$. 

Let $L$ be a line in $\R^{d+1}$, $A$ be some point of $L$, and $e \in \R^{d+1}$ be a direction vector of $L$. The sets
$$
L_{A,e}^+ = \{ \, X \in L,  \overrightarrow{A X} = \lambda e,  \lambda \geq 0\, \} \text{ and }
L_{A,e}^- = \{ \, X \in L,  \overrightarrow{A X} = \lambda e,  \lambda \leq 0\, \},
$$
are the rays of $L$ with vertex $A$. 

The Minkowski sum of $S \subset \R^{d+1}$ and $T \subset \R^{d+1}$ is defined to be $S+T = \{ \, s+t, s \in S,  t \in T\, \} $. 

Let $M \subset \R^{d+1}$ be a set. 
We denote by $K_A(M)$ the set of points in $M$ lying on the rays starting from the point $A \in M$. If there are no such rays, we set $K_A(M)=\{ \, A \, \}$. We say that a set $K_1$ is a parallel translation of $K_2$ if $K_2=e+K_1$ for some direction $e \in \R^{d+1}$. It is known that when $M$ is convex, $K_A(M)$ is convex and independent (up to a parallel translation) of the point $A \in M$ and is called asymptotic cone of the convex set $M$ 
\cite[Theorem 1.8 and Corollary 1]{Bakelman1994}. For a convex bounded set $M$, we have  $K_A(M)=\{ \, A \, \}$ for all $A \in M$ and $\{ \, A \, \}$ is a parallel translation of $\{ \, B \, \}$ for all $A, B \in M$. 

\begin{definition} \label{def-asc}
The asymptotic cone $K_A(M)$ of a convex set $M$ is defined for $A \in M$ as
\begin{multline*}
\{ \, B: B \in L_{A,e}^+ \text{ for } e \in \R^d \text{ and } L_{A,e}^+ \subset M  \, \}= 
\{ \, B: B = A + \mu e, \mu \geq 0,  e \in \R^{d+1}, \\ A + \lambda e \in M \ \forall   \lambda \geq 0 \, \}.
\end{multline*}
It is unique up to parallel translation, and is in that sense independent of the point $A$, 
i.e. $K_B(M) = K_A(M)+\overrightarrow{AB}$.
\end{definition}

The reason of the term ''cone'' in the name asymptotic cone will be clear from section \ref{comp-section} below where we give a formal definition of cone. 
Moreover, we will be interested in a specific example of cone which we will refer to as polyhedral angle (formal definitions are in section \ref{comp-section}). An intuitive notion of cones and polyhedral angles as illustrated in Figure 
\ref{pointed_cone-fig} is enough for this paper. 

 \begin{figure}[tbp]
\begin{center}
\includegraphics[angle=0, height=4.5cm]{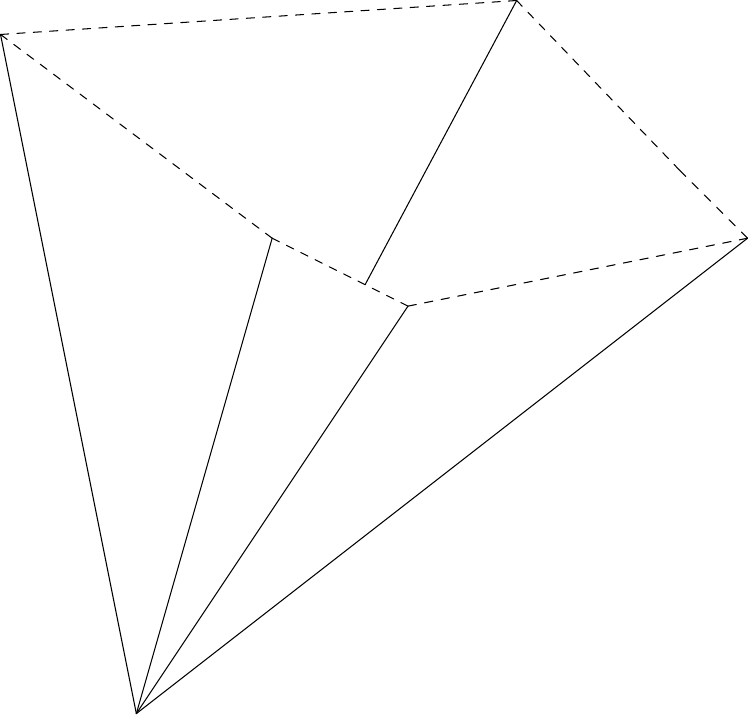}
\includegraphics[angle=0, height=4.5cm]{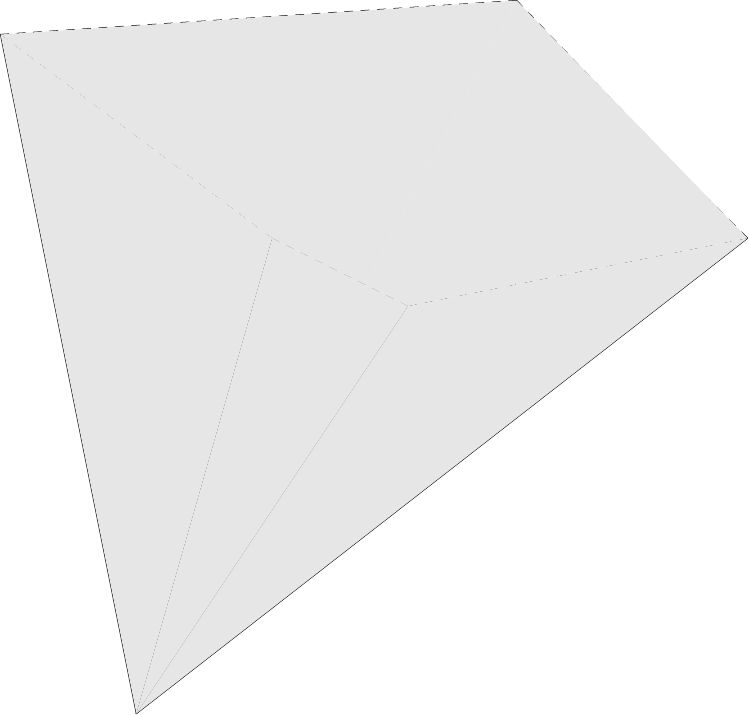}
\end{center}
\caption{ A polyhedral angle in $\R^3$. The dashed polygon is a virtual cut of the unbounded set. To emphasize that a polyhedral angle has non zero Lebesgue measure, a filled version is shown.} 
\label{pointed_cone-fig}
\end{figure}


We denote by $\Conv (D)$ the convex hull of the set $D \subset \R^d$, i.e. the smallest convex set containing $D$. It is known that $\Conv (D)$ is the set of all convex combinations of elements of $D$, i.e. the set of elements $\sum_{i=1}^n \lambda_i x_i$, $n \in \mathbb{N}$, $x_i \in D$, $\lambda_i \in [0,1]$ and $\sum_{i=1}^n \lambda_i =1$.

Let $Y \subset \R^d$ be a convex polygon with vertices $a_1^*, a_2^*, \ldots, a_{N^*}^* \in \R^d$. We have 
$Y=\Conv\{ \, a_1^*, a_2^*, \ldots, a_{N^*}^* \, \}$. In this paper, we use the mention $*$ for objects related or which will be related to $\Omega^*$. As we will associate below a cone $K$ to $Y$, we avoid the  $*$ notation for $Y$ to avoid confusion with the dual of a cone. 
We assume that $Y$ is non degenerate in the sense that it has non zero Lebesgue measure. 
Define for $(p,\mu) \in \Omega \times \R$ 
the  function on $\R^d$
\begin{equation} \label{ex-cc}
k_{(p,\mu)}(x) = \max_{1 \leq i \leq N^*} (x-p) \cdot a_i^* +\mu.
\end{equation}
Recall that the epigraph of $k_{(p,\mu)}$ is the set
$$
K_{(p,\mu)} = \{ \, (x,w) \in \R^d \times \R, w \geq k_{(p,\mu)}(x) \, \}.
$$
We will refer to sets of  the type $K_{(p,\mu)}$ as polyhedral angles, and refer to Figures \ref{pointed_cone-fig} and \ref{translation} for illustrations. In other words, a polyhedral angle is the epigraph of a function of type $k_{(p,\mu)}$ given in \eqref{ex-cc}. In section \ref{comp-section} we give a more general definition of polyhedral angle. We only need the class of polyhedral angles introduced above in this paper. 

It is crucial for the reader to see the connection between the graph of a function $k_{(p,\mu)}$ for given $(p,\mu)$ and the polyhedral angles depicted in Figures \ref{pointed_cone-fig} and \ref{translation}. For another example, the function defined on $\R$ by $w=|x|$, i.e., $w=\max \{ \, -x, x \, \}$ is a function of the form $k_{(p,\mu)}$. Its epigraph is a polyhedral angle. 

To the polygon $Y$ we associate the polyhedral angle 
$$
K\equiv K_{(0,0)},
$$ 
which depends only on the vertices of $Y$. In section \ref{cvg}, we will approximate the closure of the bounded convex domain $\Omega^*$ by polygons $Y \subset \tir{\Omega^*}$. The polyhedral angle $K$ associated with $Y$ is an example of a more general construction, which we now describe.

For each $p \in \tir{\Omega^*}$ one associates the half-space $Q(p)=\{ \, (x,z) \in \R^d \times \R, z \geq p \cdot x \, \}$. The convex set $K_{\Omega^*}$ is defined as the intersection of the half-spaces $Q(p), p \in \tir{\Omega^*}$, i.e.
\begin{equation} \label{cone-def}
K_{\Omega^*} := \cap_{p \in  \tir{\Omega^*} } \, Q(p).
\end{equation} 
Recall that the support function of the closed convex set $\Omega^*$ is defined for $x \in \R^d$ by
\begin{equation} \label{cone-def-cvx}
k_{\Omega^*}(x) := \sup_{p \in  \tir{\Omega^*} } \, p \cdot x.
\end{equation} 
The convex set $K_{\Omega^*}$ is the epigraph of $k_{\Omega^*}$ and the latter is a supremum of affine functions ($x \mapsto p \cdot x$), the gradients of which are in $\tir{\Omega^*}$. A slight abuse of notation is made in the notations $K_{\Omega^*}$ and $k_{\Omega^*}$ for convenience, as previously, a point $(p,\mu)$ was used as a subscript for $K$ and $k$.

In the case $\tir{\Omega^*}=Y$ is a non degenerate convex polygon with vertices $a_i^*, i=1,\ldots,N^*$, although the corresponding convex set $K_{\Omega^*}$ is by definition the intersection of an infinite number of half-spaces, i.e. 
$\cap_{p \in Y } \, Q(p)$, we claim that if $\tir{\Omega^*}=Y$, we have $K_{\Omega^*} = \cap_{ i=1 }^{N^*} \, Q(a_i^*)$.

Indeed, $\cap_{p \in Y } \, Q(p) \subset \cap_{ i=1 }^{N^*} \, Q(a_i^*)$. To prove the reverse inclusion, note that if $p \in Y$, $p=\sum_{i=1}^{N^*} \lambda_i a_i^*, \sum_{i=1}^{N^*} \lambda_i =1, 0 \leq \lambda_i \leq 1$. Let $(x,z) \in  \cap_{ i=1 }^{N^*} \, Q(a_i^*)$. We have $z \geq a_i^* \cdot x$ for all $i$ and thus $z \geq p \cdot x$, i.e. $(x,z) \in \cap_{p \in Y } \, Q(p)$. 

Thus $K_{\Omega^*}$ for $\tir{\Omega^*}=Y$ is the polyhedral angle $K$ introduced above, i.e. 
$K_{Y} = \cap_{ i=1 }^{N^*} \, Q(a_i^*)=K$. In this case, $k_{\Omega^*}(x)=k_{(0,0)}(x)=\max_{i=1,\ldots,N^*}   (x \cdot a_i^*)$.

The result given in the following lemma is illustrated in Figure \ref{translation}.

 \begin{figure}[tbp]
\begin{center}
\includegraphics[angle=0, height=4.5cm]{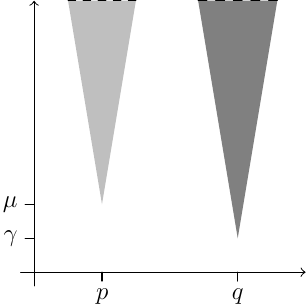}
\end{center}
\caption{ Epigraph of $k_{(p,\mu)}$ with a parallel translation as epigraph of $k_{(q,\gamma)}$. The dotted lines at the top of the figure represent virtual cuts of the unbounded epigraphs. As $\gamma < \mu$ in the figure, more of the unbounded epigraph is shown.} 
\label{translation}
\end{figure}

\begin{lemma} \label{ex-as-cone}
The epigraph of $k_{(p,\mu)}$ for $(p,\mu) \in \mathbb{R}^d \times \mathbb{R}$
is a convex set in $\R^{d+1}$ equal to its asymptotic cone. Furthermore, the epigraph of $k_{(p,\mu)}$ can be obtained from the one of  $k_{(q,\gamma)}$ for  $(q,\gamma) \in \mathbb{R}^d \times \mathbb{R}$ by a parallel translation.
\end{lemma}

\begin{proof}
As the maximum of convex functions, $k_{(p,\mu)}$ is a convex function and hence $K_{(p,\mu)}$ is a convex set. Next, we show that $K_{(p,\mu)} = K_{(q,\gamma)} - (q-p,\gamma-\mu)$. 

Let $(r,\eta) \in \mathbb{R}^d \times \mathbb{R}$. We show that $(r,\eta) \in K_{(p,\mu)}$ if and only if $(r,\eta) \in K_{(q,\gamma)} - (q-p,\gamma-\mu)$. 
Using the definitions and a few algebraic calculations, one shows that
$\eta \geq k_{(p,\mu)}(r)$ if and only if $\eta+(\gamma-\mu) \geq k_{(q,\gamma)}(r+(q-p))$. 
Note that $\eta \geq k_{(p,\mu)}(r)$ if and only if $(r,\eta) \in K_{(p,\mu)}$. Also, $\eta+(\gamma-\mu) \geq k_{(q,\gamma)}(r+(q-p))$ is equivalent to $(r+ q-p, \eta + \gamma-\mu) = (r,\eta) +(q-p,\gamma-\mu) 
\in K_{(q,\gamma)}$. Thus, $(r,\eta) \in K_{(p,\mu)}$ if and only if $(r,\eta) +(q-p,\gamma-\mu) 
\in K_{(q,\gamma)}$, i.e. $(r,\eta) 
\in K_{(q,\gamma)}- (q-p,\gamma-\mu) $. This proves the claim.

By definition of asymptotic cone of a convex set $M$, we have $K_A(M) \subset M$ for $A \in M$. Thus $K_{(p,\mu)}(K_{(p,\mu)}) \subset K_{(p,\mu)}$, i.e. the asymptotic cone of $K_{(p,\mu)}$ is contained in $K_{(p,\mu)}$. 

Let now $(q',\gamma') \in K_{(p,\mu)}$.  We find a direction $e \in \R^{d+1}$ such that the ray $L^+_{(p,\mu),e}$ with direction $e$ and vertex $(p,\mu)$ is contained in $K_{(p,\mu)}$ and $(q',\gamma')$ is on that ray.

Put $e=(q'-p,\gamma'-\mu)$. Then $(q',\gamma') = (p,\mu) + e$. So $(q',\gamma')  \in L^+_{(p,\mu),e}$. Since $(q',\gamma') \in K_{(p,\mu)}$ we have
$$
\gamma' \geq (q'-p) \cdot a_i^* + \mu, \quad \forall i=1,\ldots,N.
$$
It follows that
$$
\mu + \lambda (\gamma'-\mu) \geq \lambda (q'-p) \cdot a_i^* + \mu, \quad \forall i=1,\ldots,N.
$$
From the definition of $k_{(p,\mu)}$ we have $\mu + \lambda (\gamma'-\mu) \geq k_{(p,\mu)}(p+\lambda (q'-p))$ which proves that $(p,\mu) + \lambda e \in K_{(p,\mu)}$ for all $\lambda \geq 0$. \Qed
\end{proof}

Recall that, by Lemma \ref{ex-as-cone}, $K_{(p,\mu)}=K_{(0,0)}+(p,\mu)=K+(p,\mu)$. We recall the following equivalent characterization of the asymptotic cone \cite{auslender2006asymptotic}.

\begin{lemma} \label{as-eq-lem}
Let $M \subset \R^{d+1}$ be a closed convex set, $e \in \R^{d+1}$ and $A \in M$. The following two statements are equivalent

\begin{enumerate}
\item $L_{A,e}^+ \subset M$

\item $\exists \lambda_k \in \R, \lambda_k >0, \lambda_k \to \infty$ and $\exists A_k \in M$, $k \in \mathbb{N}$ such that $A_k/\lambda_k \to e$ as $k \to \infty$. 
\end{enumerate}

\end{lemma}

\begin{proof} Assume that $ L_{A,e}^+ \subset M$ and let $\lambda_k \to \infty$. Then $A_k=A+\lambda_k  e \in M$ and $A_k/\lambda_k  \to e$.

Conversely suppose $\lambda_k \to \infty$ and $A_k \in M$ is such that $A_k/\lambda_k \to e$. Put $d_k=(A_k-A)/\lambda_k$. Then
$A_k = A+ \lambda_k d_k \in M$ and $d_k \to e$. Let $\lambda>0$ and choose $k$ sufficiently large such that $\lambda \leq \lambda_k$. Since $M$ is convex
$$
A + \lambda d_k = \bigg(1-\frac{\lambda}{\lambda_k}\bigg) A + \frac{\lambda}{\lambda_k} A_k,
$$
is in $M$ and hence its limit $A+\lambda e$ is in $M$ as $M$ is closed. \Qed
\end{proof}

Recall the convex set $K_{\Omega^*}$, c.f. \eqref{cone-def}.

\begin{lemma} \label{conv-hull-union}
Let S be a closed and bounded convex set and let $M$ denote the convex hull of the union of $S$ and  $A+K_{\Omega^*}$ for $A \in S$.
Then the closure of $M$ is given by $S+K_{\Omega^*}$. 
\end{lemma}

\begin{proof} Let $x \in M$. There exist points $A_i \in S, i=1,\ldots,m$ and points $C_i \in K_{\Omega^*}, i=m+1,\ldots,n$ for integers $m$ and $n$ with scalars $\alpha_i, i=1,\ldots,n$ such that
$$
x=\sum_{i=1}^m \alpha_i A_i + \sum_{i=m+1}^n \alpha_i(A + C_i), \ \text{with} \ \sum_{i=1}^n \alpha_i=1, 0 \leq \alpha_i \leq 1. 
$$
Since $K_{\Omega^*}$ is convex and the origin $O \in K_{\Omega^*}$, $\big(\sum_{i=1}^m \alpha_i \big) O + \sum_{i=m+1}^n \alpha_i C_i \in K_{\Omega^*}$. On the other hand $\sum_{i=1}^m \alpha_i A_i + \sum_{i=m+1}^n \alpha_i A \in S$. Thus $M \subset S+K_{\Omega^*}$, and so $\tir{M} \subset \tir{S+K_{\Omega^*}}$.

Let now $x \in S+K_{\Omega^*}$, i.e. $x=s + z$ with $s \in S$ and $z\in K_{\Omega^*}$. 
Let $\epsilon >0$ and note that $z/\epsilon \in K_{\Omega^*}$. We consider the point
$$
A_{\epsilon} = A+z + (1-\epsilon)(s-A) = \epsilon (A + \frac{z}{\epsilon}) + (1-\epsilon) s.
$$
The point $A_{\epsilon}$ is a convex combination of a point in $K_{\Omega^*}+A$ and a point in $S$. Thus $A_{\epsilon} \in M$. As $\epsilon \to 0$, $A_{\epsilon} \to s +z = x$. This proves that $x \in \tir{M}$. 

We have $S+K_{\Omega^*} \subset \tir{M} \subset  \tir{S+K_{\Omega^*}}$. To conclude the proof, we show that $S+K_{\Omega^*}$ is closed. Since $S$ is a closed and bounded set and $K_{\Omega^*}$ is closed, $S+K_{\Omega^*}$ is closed. To prove this claim, let $x_l=s_l+a_l$ be a sequence in $S+K_{\Omega^*}$, $s_l \in S$ and $a_l \in K_{\Omega^*}$. We assume that $x_l$ converges to $x$. 
If necessary, by taking a subsequence, as $S$ is bounded and closed, we may assume that $s_l$ converges to $s$ in $S$. Then, $a_l=x_l-s_l$ converges as the difference of two convergent sequences to an element $a \in K_{\Omega^*}$ as $K_{\Omega^*}$ is closed. We have $a=x-s$ and hence $x=a+s \in S+K_{\Omega^*}$. We conclude that $S+K_{\Omega^*}$ is closed.  The proof is complete. 
\Qed
\end{proof}

We note that in the above lemma 
the closure of the convex hull of the union of $S$ and  $A+K_{\Omega^*}$ for $A \in S$ is independent of the choice of $A$.

We illustrate Lemma \ref{conv-hull-union} in Figure \ref{cone-fig}. But first, we rewrite the Minkowski sum of two sets as a union of sets. 

Let $S$ and $T$ be two subsets of $\R^{d+1}$. 
Then we have $S+T  =  \{ \, t+S, t \in T\, \} = \cup_{t \in T} t+S$. 
We say that the sum $S+T$ is obtained by sweeping the set $S$ over $T$,
\begin{equation} \label{min-union}
S+T = \cup_{t \in T} t+S.
\end{equation}
Clearly, if $r \in S+T$, $r=s+t$ for some $s \in S$ and $t \in T$. Thus $r \in t+S$. The reverse inclusion is also immediate.

We have $S+K_{\Omega^*} = \cup_{s \in S} \, (s+ K_{\Omega^*})$. Note that the sets $s+K_{\Omega^*}$ are parallel translates of each other. Thus Lemma \ref{conv-hull-union} says that the closure of the convex hull of the union of $S$ and  $A+K_{\Omega^*}$ for $A \in S$ is obtained by sweeping $K_{\Omega^*}$ over $S$. 

Recall Definition \ref{def-asc} of asymptotic cone of a convex set.

\begin{theorem} \label{as-poly}
Let S be a closed and bounded convex set and let $M$ denote the convex hull of the union of $S$ and  $A+K_{\Omega^*}$ for $A \in S$.
Then $K_A(\tir{M}) = A+K_{\Omega^*}$, i.e. the closure of $M$ has asymptotic cone $A+K_{\Omega^*}$. 
\end{theorem}

\begin{proof} 
By Lemma \ref{conv-hull-union}, $\tir{M}=S+K_{\Omega^*}$. Recall the notation $K_A(W)$ for $A \in W$ for the asymptotic cone of the convex set $W$. 
We prove that $K_{A} (S+K_{\Omega^*}) = A+K_{\Omega^*}$. We first note that if $S \subset T$ and $A \in S$, then $K_A(S) \subset K_A(T)$. Indeed if $B \in K_A(S)$, then there is a direction $e$ such that $B \in L_{A,e}^+ \subset S \subset T$. 

Since $A+ K_{\Omega^*} \subset S+K_{\Omega^*}$ we have
$A+ K_{\Omega^*} = K_A(A+ K_{\Omega^*}) \subset K_{A}(S+K_{\Omega^*})$. Let now $B \in K_{A}(S+K_{\Omega^*})$ 
and let $e$ such that $B=A +\mu e$ for some $\mu >0$ and $L_{A,e}^+ \subset S+K_{\Omega^*}$. We show that $L_{A,e}^+ \subset A +K_{\Omega^*}$. 

By Lemma \ref{as-eq-lem} there exists a sequence $\lambda_k \to \infty$ and sequences $s_k \in S$ and $b_k \in K_{\Omega^*}$ such that
$(s_k+b_k)/\lambda_k \to e$. But $S$ is compact and so we may assume that the sequence $s_k$ converges to $s \in S$. This implies that $s_k/\lambda_k \to 0$ and hence $b_k/\lambda_k \to e$. By Lemma \ref{as-eq-lem} again, $L_{O,e}^+ \subset K_{\Omega^*}$, where $O$ is the origin of $\R^{d+1}$. It follows that $L_{A,e}^+ \subset A + K_{\Omega^*}$.  \Qed \end{proof}


\subsection{Convex extensions} \label{subdifferential}

Let us consider 
a convex function $u_0 \in C(\tir{\Omega})$ such that $\partial u_0(\Omega)=\Omega^*$. One can extend $u_0$ to $\R^d$  by
\begin{equation} \label{ext-asc}
\tilde{u}(x) = \inf \{ \, u_0(y) + \sup_{z \in  \tir{\Omega^*}} (x-y) \cdot z,  y \in \tir{\Omega}\, \}.
\end{equation}
The above formula 
was interpreted as a minimal convex extension in some sense or a special form of infimal convolution \cite[(15)]{benamou2017minimal}. Another extension formula used in \cite[p. 157]{ChouWang95} is given by
\begin{equation} \label{ext-sh}
\tir{u}(x) = \sup \{ \, u_0(y) +  (x-y) \cdot z, y \in \Omega, z \in \partial u_0(y)  \, \}.
\end{equation}
We consider below a generalization of \eqref{ext-asc}. 

We recall that a point $(x,\mu)$ is on the lower part of the boundary of a convex set $M \subset \R^{d+1}$ if $(x,\mu) \in M$ and $(x,\mu)-(0,\ldots,0,\lambda) \notin M$ for all $\lambda>0$. Recall also that given a domain $U \subset \R^d$, e.g. $U=\Omega$ or $U=\R^d$, and a function $v$ defined on $U$, the graph of $v$ is the subset of $\R^{d+1}$ given by
$$
\{ \, (x, v(x)): x \in U\, \}.
$$

Let $u$ be a piecewise linear convex function on $\Omega$ and $E\subset \Omega$ bounded. The graph $\mathcal{M}_u=\{ \, (x, u(x)),  x \in E\, \}$ of $u$ is the lower part of the boundary of a convex polygonal domain $S=\{ \, (x,\mu) \in \R^{d+1}, x \in E, u(x) \leq \mu \leq \mu_{max} \, \}$, where $\mu_{max}
=\max_{x \in E} u(x)$. We refer to the vertices of $S$ on $\mathcal{M}_u$ as the vertices of $u$. 

The projection $U \subset \R^d$ of a convex set $M \subset \R^{d+1}$ is the set $\{ \,  x: x \in \R^d, \exists \lambda \in \R, (x,\lambda) \in M\, \}$. We give an example of projection of a convex set in Figure \ref{cone-fig}.

\begin{definition} \label{def-cv-function}
A convex set $M \subset \R^{d+1}$ defines a function $v$ on its projection $U \subset \R^d$ if the graph of $v$ on $U$ is equal to the lower part of the boundary of $M$.
\end{definition}

As an example, the polyhedral angle $K_{(p,\mu)}$ defines the function $k_{(p,\mu)}$ on $\R^d$. We also say that  the polyhedral angle $K_{(p,\mu)}$ has boundary given by the graph of $k_{(p,\mu)}$. The convex set $K_{\Omega^*}$, c.f. \eqref{cone-def}, defines the convex function $k_{\Omega^*}$, c.f. \eqref{cone-def-cvx}, on $\R^d$. It is known that $\chi_{k_{\Omega^*}}(\R^d) = \tir{\Omega^*}$,  \cite[p. 22]{Oliker03}.

\begin{definition}

We say that a convex function $v$ on $\R^d$ has asymptotic cone $K_{\Omega^*}$ if its epigraph $M$ has asymptotic cone $A+K_{\Omega^*}$ for $A \in M$. 

\end{definition}

Recall that the asymptotic cone of a convex set $M$ is a particular convex set associated with $M$. It contains all half-lines starting at a point $A \in M$ and contained in $M$. When $M$ is the epigraph of a function $v$, the lines in the asymptotic cone $K_A(M)$ give the behavior of $v$ at infinity. 

\begin{lemma} \label{as-2-bvp}
Let $v$ be a convex function on $\R^d$ such that $\chi_v (\R^d)  = \tir{\Omega^*}$. Then $v$ has asymptotic cone $K_{\Omega^*}$. 
\end{lemma}

\begin{proof} 
A point $A \in \R^{d+1}$ is denoted $(x,z)$ for $x \in \R^d$ and $z \in \R$. Let $M$ denote the epigraph of $v$ and assume that $A_1=(a_1,u_1) \in \partial M$. 
Note that $M$ is unbounded and $\partial M$ is the lower part of the boundary of $M$. We first prove that $A_1+K_{\Omega^*} \subset K_{A_1}(M)$. Let $(x,w) \in A_1+ K_{\Omega^*}$ and put $e=(x,w)-(a_1,u_1)$. We show that for all $\lambda>0$, $A_1 + \lambda e \in M$. Assume by contradiction that this does not hold. Let $B$ be the point of intersection with $\partial M$ of the line through $A_1$ and with direction $e$. The half-line $L_{B,e}^+$ is then not contained in $M$. Choose $C \in L_{B,e}^+, C \neq B$ and put
\begin{align*}
B& =(x_B,z_B)=A_1 + \mu_1 e =(a_1,u_1) + \mu_1 (x-a_1,w-u_1) \\
C&=(x_C,z_C)=A_1 + \mu_2 e = (a_1,u_1) + \mu_2 (x-a_1,w-u_1),
\end{align*}
for $\mu_1\geq 0$, and $\mu_2 >0$. By construction $\mu_2-\mu_1>0$. Now let $p \in \chi_v(x_B)$. Since the plane $z=p\cdot(x-x_B)+z_B$ is a supporting hyperplane to $M$ at $B$, we can choose 
$C \notin M$, in addition to $C \in L_{B,e}^+, C \neq B$, such that
$z_C < p\cdot(x_C-x_B)+z_B$.
But $z_B = u_1+ \mu_1(w-u_1)$, $z_C= u_1+ \mu_2(w-u_1)$, $x_B= a_1+ \mu_1(x-a_1)$ and $x_C= a_1+ \mu_2(x-a_1)$.
As $z_C-z_B=(\mu_2-\mu_1) (w-u_1)$ and $x_C-x_B=(\mu_2-\mu_1) (x-a_1)$, we obtain $w-u_1< p \cdot(x-a_1)$. 

By assumption $\chi_v (\R^d)  = \tir{\Omega^*}$ and hence $p \in \tir{\Omega^*}$. Since $(x,w) \in A_1+K_{\Omega^*}$, $(x,w)-(a_1,u_1) \in K_{\Omega^*}$, and by the definition \eqref{cone-def} of $K_{\Omega^*}$, we have
$w \geq p \cdot (x-a_1)+u_1$. 
This contradicts $w-u_1< p \cdot(x-a_1)$. 

Next, we prove that $K_{A_1}(M) \subset A_1+K_{\Omega^*}$. Let $(x,w) \in K_{A_1}(M)$. The half-line $L_{A_1,e}^+$ is contained in $M$ with $e=(x,w)-(a_1,u_1)$. That is $(a_1,u_1) + \lambda (x-a_1,w-u_1) \in M$ for all $\lambda >0$. 

For each $p \in  \tir{\Omega^*}$ we can find $x_p \in \R^d$ such that $z=p\cdot(x-x_p) + v(x_p)$ is a supporting hyperplane to $M$ at $(x_p,v(x_p))$. Thus 
$$
u_1 + \lambda (w-u_1) \geq p \cdot  (a_1 + \lambda (x-a_1) - x_p) + v(x_p). 
$$
This gives $w-u_1 \geq p \cdot(x-a_1) + (p \cdot (a_1-x_p)  + v(x_p)-u_1)/\lambda$. Taking $\lambda \to \infty$ we obtain
$w \geq  p \cdot(x-a_1) + u_1$ for all $p \in \tir{\Omega^*}$. Thus $(x,w) \in A_1+ K_{\Omega^*}$ and the proof is complete. \Qed
\end{proof}

Let $S$ be a closed bounded convex set and let $\widetilde{S}$ denote its projection onto $\R^d$. Let $v$ denote the convex function defined by $S$ on $\widetilde{S}$. 
Put$$
D^* = \partial v((\widetilde{S})^\circ). 
$$
Assume that $(\widetilde{S})^\circ \neq \emptyset$ and $\tir{D^*} \subset \tir{\Omega^*}$. Recall that $K_{\Omega^*}$ is the epigraph of $ \sup_{p \in  \tir{\Omega^*} } \, p \cdot x$, c.f. \eqref{cone-def}.  
The set $K_{\Omega^*}+S=\cup_{s \in S} \, (s+K_{\Omega^*})$, which is convex by Lemma \ref{conv-hull-union}, also defines a convex function on $\R^d$ which extends $u$ to $\R^d$. This is proven in the next theorem where the assumption that $\tir{D^*} \subset \tir{\Omega^*}$ is used to prove that $u=v$ on $\widetilde{S}$. 
By sweeping $K_{\Omega^*}$ over $S$, $K_{\Omega^*}+S$ is the union of parallel translations of $K_{\Omega^*}$ and hence the values of the convex function $u$ on $\R^d$, i.e. the lower part of the boundary of $K_{\Omega^*}+S$, can be obtained from the lower part of the boundaries of some $s+K_{\Omega^*}, s \in S$. Note that the lower part of the boundary of $(y,\mu)+K_{\Omega^*}$ for $(y,\mu) \in \R^d \times \R$ is the epigraph of $\mu+\sup_{p \in  \tir{\Omega^*} } \, p \cdot (x-y)$. In the appendix we give a different proof of the next theorem using results on infimal convolution. 

\begin{theorem}  \label{formula-0}
Let $S$ be a closed bounded convex set which defines a convex function $v$ on the projection $\widetilde{S}$ of $S$
onto $\R^d$. Let $D^* = \partial v((\widetilde{S})^\circ)$ and assume that $(\widetilde{S})^\circ \neq \emptyset$ and $\tir{D^*} \subset \tir{\Omega^*}$. 
The convex set $K_{\Omega^*}+S$ defines a convex function $u$ on $\R^d$ which extends $v$ from $\widetilde{S}$ to $\R^d$ by 
\begin{equation} \label{partial-on-1}
u(x) = \inf_{y \in \widetilde{S}} v(y) +  \sup_{p \in  \tir{\Omega^*} } \, p \cdot (x-y), x \notin \widetilde{S}.
\end{equation}
\end{theorem}

\begin{proof}
Elements of $S$ take the form $(y,\mu), y \in \widetilde{S}$ and $\mu \in \R$. We have by Lemma \ref{conv-hull-union} and \eqref{min-union}
\begin{equation} \label{union-stuff}
S+K_{\Omega^*} = \cup_{(y,\mu) \in S} (y,\mu) + K_{\Omega^*}.
\end{equation}
We refer to Figure \ref{cone-fig} for an illustration of the above equality in the case $K_{\Omega^*}$ is polygonal, in which case  \eqref{partial-on-1} simplifies to  \eqref{partial-on-3} below. 
Equation \eqref{partial-on-3} is also illustrated in Figure \ref{cone-fig}. 
By Definition \ref{def-cv-function}, $S+K_{\Omega^*}$ defines a convex function $u$ on $\R^d$. This means that for $x \in \R^d$, $(x,u(x)) \in S+K_{\Omega^*}$ and 
if $(x,\mu) \in S+K_{\Omega^*}$, then $\mu \geq u(x)$, since by definition of lower part of $S+K_{\Omega^*}$, when $\lambda < u(x)$, $(x,\lambda) \notin S+K_{\Omega^*}$. Recall that $v$ denotes the convex function on $\widetilde{S}$ defined by the convex set $S$. We first show that $u=v$ on $\widetilde{S}$.

Since $0 \in K_{\Omega^*}$, $S \subset S+K_{\Omega^*}$ and recall that for $x \in \widetilde{S}, (x,v(x)) \in S  \subset S+ K_{\Omega^*}$. Thus $u(x) \leq v(x)$ for all $x \in \widetilde{S}$.

Assume that there exists $x \in  \widetilde{S}$ such that $u(x) < v(x)$. As $(x,v(x))$ is on the lower part of the boundary of $S$, $(x,u(x)) \not \in S$. But $(x,u(x)) \in S+K_{\Omega^*}$. By \eqref{union-stuff}, we can find $(y,\mu) \in S$ such that $(x,u(x)) \in (y,\mu) + K_{\Omega^*}$. Since $\tir{D^*} \subset \tir{\Omega^*}$, we have $K_{\Omega^*} \subset K_{D^*}$. Indeed, let $(x,w) \in K_{\Omega^*}$. We have $w \geq p \cdot x$ for all $p \in \Omega^*$. In particular,  $w \geq p \cdot x$ for all $p \in D^*$ and hence  $(x,w) \in K_{D^*}$.  This proves the claim. Therefore, $(x,u(x)) \in (y,\mu) + K_{D^*}$. 

Let $\tir{v}$ denote the convex extension of $v$ to $\R^d$ using supporting hyperplanes, i.e. the procedure described by \eqref{ext-sh}. 
By \cite[Lemma 4]{awanou2019uweakcvg}
$\chi_{\tir{v}}(S) =\chi_{\tir{v}}(\R^d) = \tir{D^*}$. By Lemma \ref{as-2-bvp}, $\tir{v}$ has asymptotic cone $K_{D^*}$. Thus, if $M$ denotes the epigraph of $\tir{v}$, for all $(y,\mu) \in  M$,  $y \in \R^d, \mu \in \R$,  we have $(y,\mu) + K_{D^*} \subset M$ and therefore 
for $x \in  \widetilde{S}$ and $(x,u(x)) \in (y,\mu)+ K_{D^*} \subset M$, we have $u(x) \geq \tir{v}(x)=v(x)$.




Next, we give an analytical proof of \eqref{partial-on-1}. Note that for $(y,\mu) \in S$, $(y,\mu) + K_{\Omega^*}$ defines the convex function $k_{(y,\mu)}(x)
=\sup_{p \in  \tir{\Omega^*} } \, p \cdot (x-y)+\mu$. 
Here, we make a slight abuse of notation, c.f. \eqref{ex-cc} where a max over a finite number of points is used for $k_{(y,\mu)}$.
Since $(y,\mu) + K_{\Omega^*} \subset S+K_{\Omega^*} $ for each $(y,\mu) \in S$, we have
$
u(x) \leq k_{(y,\mu)}(x) 
$ for $(y,\mu) \in S$. As $u(y)=v(y)$ for $y \in \widetilde{S}$, we have $(y,u(y)) \in S$ for $y \in \widetilde{S}$. We conclude that for $x \in \R^d$
\begin{equation} \label{ext-part0}
u(x) \leq k_{(y,u(y))}(x) =\sup_{p \in  \tir{\Omega^*} } \, p \cdot (x-y)+u(y),
\end{equation}
for $y \in \widetilde{S}$. We next show that for $x \not \in \widetilde{S}$, we can find $y \in \widetilde{S}$ such that $u(x) =  k_{(y,u(y))}(x)$. 

Since $(x,u(x)) \in S+K_{\Omega^*}$, by \eqref{union-stuff} we can choose $(y,\mu) \in S$ such that $(x,u(x)) \in (y,\mu) + K_{\Omega^*}$. Using the definition of lower part of the boundary of $ (y,\mu) + K_{\Omega^*}$,  $\mu \geq v(y)$ for $(y,\mu) \in S$  and $u=v$ on $\widetilde{S}$ we get
\begin{multline*}
u(x) \geq k_{(y,\mu)}(x) =  \sup_{p \in  \tir{\Omega^*} } \, p \cdot (x-y) +\mu \geq 
\sup_{p \in  \tir{\Omega^*} } \, p \cdot (x-y) + v(y) \\
  = \sup_{p \in  \tir{\Omega^*} } \, p \cdot (x-y) + u(y). 
\end{multline*}
We conclude from \eqref{ext-part0} that \eqref{partial-on-1} holds. \Qed
\end{proof}

Let $a_j^*, j=1,\ldots,N^*$ denote the vertices of a non degenerate convex polygon $Y \subset \R^d$. 
Thus, the interior of $Y$ is a convex domain in $\R^d$. 
Recall that 
$K$ denotes the polyhedral angle which is the epigraph of $\max_{1 \leq j \leq N^*} (x \cdot a_j^*)$. Recall also that when $\tir{\Omega^*}=Y$, $K_{\Omega^*}$ is the polyhedral angle $K$. In this case, \eqref{partial-on-1} becomes
\begin{equation} \label{partial-on-3}
u(x) = \inf_{y \in \widetilde{S}} \max_{1 \leq j \leq N^*} (x-y) \cdot a_j^* +u(y), x \notin \widetilde{S},
\end{equation}
where we used $u=v$ on $\widetilde{S}$.

Let $S$ be the polygon with vertices $(a_1, u_1), \ldots, (a_m, u_m)$ in $\R^{d+1}$. The projection $\widetilde{S}$ of $S$ onto $\R^d$ is the convex hull of $\{ \, a_1,\ldots,a_m \, \}$. Let us assume that $\{ \, a_1,\ldots,a_p \, \}$ for $p \leq m$
consist of the vertices which are on the boundary of $\widetilde{S}$.
It is assumed that $(\widetilde{S})^\circ \neq \emptyset$. 
The purpose of the next theorem is to show that 
the infimum in \eqref{partial-on-3} can be restricted to the boundary of $\widetilde{S}$. 
Such a formula is of interest for computational purposes, since the minimization in the extension formula of the next theorem is over a set much smaller than $\widetilde{S}$. This motivates the discrete extension formula \eqref{extension} where we consider the minimization over mesh points on $\partial \Omega_h$. As explained in the introduction, the discrete extension formula is needed for the discrete scheme. 


The points $a_j^*$ are not related to the points $a_i$, the same way the domain $\Omega^*$ is not related a priori to the domain $\Omega$.


\begin{figure}[tbp]
\begin{center}
\includegraphics[angle=0, height=4.5cm]{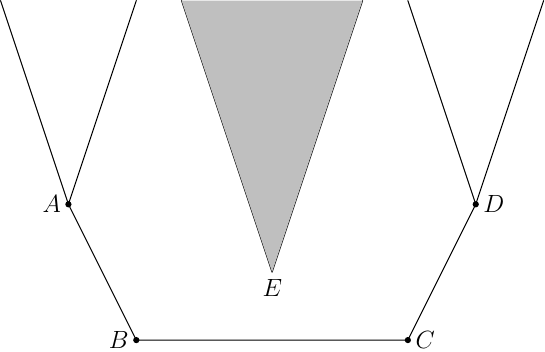}
\end{center}
\caption{ Let $S$ denote the polygon with vertices  $A(-1.5,1), B(-1,0), C(1,0)$ and $D(1.5,1)$. The polygon $S$ is the convex hull of its vertices.  The  polyhedral angle $K_{}$ associated to $\tir{\Omega^*}=[-3,3]$ is the intersection of the half-spaces $\{ \, (x_1,x_2) \in \R^2: x_2 \geq 3 x_1 \, \}$ and $\{ \, (x_1,x_2)  \in \R^2: x_2 \geq -3 x_1 \, \}$. Parallel translates $E+K$, $A+K$ and $D+K$ are shown. Put $M=\Conv(S \cup (E+K))$. To visualize $M$, note that $S \subset M$ and $E+K \subset M$. Then draw line segments connecting $A$ or $D$ to points on the boundary of $E+K$. Note that $\tir{M}$ is obtained by sweeping $K$ over $S$. 
The projection of the convex set $S$ on $\R$ is $[-1.5,1.5]$. The convex set $S$
defines a piecewise linear convex  function $u$ on $[-1.5,1.5]$, with graph the lower part of the boundary of $S$.  By 
Lemma \ref{conv-hull-union}, $\tir{M}=S+K$ is the convex hull of $S$ and $A+K$. By Theorem \ref{formula2}, 
the piecewise  linear convex  function on the real line defined by $\tir{M}$, i.e. the convex function with graph the lower part of the boundary of $\tir{M}$, is a convex extension of $u$ and is obtained by the extension formula. By Theorem \ref{as-poly} $\tir{M}$ 
has asymptotic cone $A+K_{}$. 
The ray with vertex $A$ and slope -3 and the ray with vertex $D$ and slope 3 are called extreme rays. The set $M$ is the convex hull of its vertices $A, B, C$ and $D$ and its extreme rays.
Image reproduced from \cite{Awanou-ams}. 
} 
\label{cone-fig}
\end{figure}

\begin{theorem} \label{formula}
Let $\widetilde{S}$ denote the projection on $\R^d$ of the lower part of the boundary of a polygon $S$. Let $K$ denote the polyhedral angle which is the epigraph of $\max_{1 \leq j \leq N^*} (x \cdot a_j^*)$, for given vectors $a_j^*, j=1,\ldots,N^*$, which are vertices of a non degenerate convex polygon $Y \subset \R^d$. Assume furthermore that 
$\tir{D^*} \subset Y$ where $D^*=\partial u((\widetilde{S})^\circ)$ 
and $u$ is the function defined by $S$ on $\widetilde{S}$. Assume also that 
$(\widetilde{S})^\circ \neq \emptyset$.  
The convex set $S+K$ defines a piecewise linear convex function $u$ which is given
for 
$x \notin \widetilde{S}$
by 
$$
u(x) =  \inf_{s \in \partial \widetilde{S}} \ \max_{1 \leq j \leq N^*} (x-s) \cdot a_j^*  +u(s). 
$$
\end{theorem}

\begin{proof}
The above formula is illustrated in Figure \ref{cone-fig} where the polyhedral angles (using the notation of the caption of Figure \ref{cone-fig}) $A+K$ and $D+K$ have portions of the lower part of their boundaries coincide with the graph of the extension. 

{\bf Part 1 } We show that $u$ is a piecewise linear convex function and characterize $\chi_u(x)$ for $x \notin \widetilde{S}$. Recall the representation \eqref{partial-on-3} which follows from Theorem \ref{formula-0} and $Y$ being polygonal. 
Since $S$ is the convex hull of a finite number of points, the function $u$ it defines on $\widetilde{S}$ is piecewise linear. Note that the polygon $S$ is an intersection of half-spaces, and the function defined on $\R^d$ by a half-space is a linear function. 

As in the proof of Theorem \ref{formula-0}, let for $y \in \widetilde{S}$, $k_{(y,u(y))}(x)
= \max_{1 \leq j \leq N^*} (x-y) \cdot a_j^*+u(y)$. By \cite[Chapter 4, Theorem 3]{ioffe2009theory}, for any $x \in \R^d$, $\chi_{k_{(y,u(y))}}(x)$ is the closed convex hull of a subset of $\{ \, a^*_1, \ldots, a^*_{N^*} \, \}$, i.e. $\chi_u(x)$ is a polygon with vertices in $\{ \, a^*_1, \ldots, a^*_{N^*} \, \}$. For $1\leq j \leq N^*$, $a_j^*$ is a vertex of $\chi_u(x)$ if and only if $u(x) = (x-y) \cdot a_j^*+u(y)$.
We now show that for all $x \notin \widetilde{S}$, there is $y \in \widetilde{S}$ such that $\chi_u(x) = \chi_{k_{(y,u(y))}}(x)$.

Let $x_0 \notin \widetilde{S}$ and $p \in \chi_u(x_0)$. We choose $x \in \R^d$ and have $u(x) \geq u(x_0) + p \cdot (x-x_0)$. Since $\widetilde{S}$ is compact, we can find $y_0 \in \widetilde{S}$ such that $u(x_0) = k_{(y_0,u(y_0))}(x_0)$. Recall that the graph of $u$ is the lower part of the boundary of $M=S+K$ and $M=S+K$ has asymptotic cone $K$ by Theorem \ref{as-poly}. This means that $(y_0,u(y_0)) +K \subset M$. Thus, for $x \in \R^d$, $(x, k_{(y_0,u(y_0))}(x))$ is in $(y_0,u(y_0)) +K \subset M$ and thus $ k_{(y_0,u(y_0))}(x) \geq u(x) 
\geq u(x_0) + p \cdot (x-x_0) =  k_{(y_0,u(y_0))}(x_0)  + p \cdot (x-x_0)$, i.e. $p \in  \chi_{k_{(y_0,u(y_0))}}(x_0)$.

Conversely, if $p \in  \chi_{k_{(y_0,u(y_0))}}(x_0)$, $p$ is in the convex hull of the vectors $a_{j_0}^*$ for which 
$u(x_0) = (x_0-y) \cdot a_{j_0}^*+u(y)$. It can be readily checked that $\chi_u(x_0)$ is convex. We show that any of the vectors $a_{j_0}^*$ is in $\chi_u(x_0)$ and thus $\chi_{k_{(y_0,u(y_0))}}(x_0) \subset \chi_u(x_0)$.

Let $x\in \R^d$. We have by \eqref{partial-on-3} $u(x) \geq \max_{1 \leq j \leq N^*} (x-y) \cdot a_j^* +u(y)
\geq (x-y) \cdot a_{j_0}^* +u(y)=(x-x_0) \cdot a_{j_0}^* + (x_0-y) \cdot a_{j_0}^* +u(y)=(x-x_0) \cdot a_{j_0}^*+u(y)$. This proves that $a_{j_0}^* \in  \chi_u(x_0)$ and completes the proof.

We conclude that $\chi_u(x)$ is a polygon with vertices in $\{ \, a^*_1, \ldots, a^*_{N^*} \, \}$ for any $x \notin \widetilde{S}$. This also shows with \eqref{partial-on-3} that $u$ is also piecewise linear on $\R^d \setminus \widetilde{S}$.

{\bf Part 2 } We show that the minimum in \eqref{partial-on-3} is actually on $\partial \widetilde{S}$. Let $x_0 \notin \widetilde{S}$. We can then find an index $k_0$ such that $a^*_{k_0} \in \chi_u(x_0)$. Define
$$
V_{0} = \{ \, x \in \R^d, a_{k_0}^* \in \chi_u(x) \, \}. 
$$
We first show that the non empty set $V_0$ is convex with $V_0 \cap \widetilde{S} \neq \emptyset$. Then we choose $s_1 \in V_0 \cap \widetilde{S}$. Next, we denote by $s_0$ the point of intersection  with $\partial \widetilde{S}$ of the line through $x_0$ and $s_1$. Finally, we show that $s_0$ is a point where the infimum in \eqref{partial-on-3} is realized when $x=x_0$.

Since $x_0 \in V_0$, $V_0 \neq \emptyset$. The convexity of $V_0$ follows immediately from the definitions. Let $x_1, x_2 \in V_0$ and $\lambda \in [0,1]$. For $y \in \R^d$, we have $u(y) \geq u(x_1) + (y-x_1)\cdot a_{k_0}^*$ and  $u(y) \geq u(x_2) + (y-x_2)\cdot a_{k_0}^*$. Thus $u(y) \geq \lambda  u(x_1) + (1-\lambda) u(x_2) + 
(y- \lambda x_1-(1-\lambda) x_2)\cdot a_{k_0}^*$, which shows by the convexity of $u$ that 
$ a_{k_0}^* \in \chi_u( \lambda x_1 +(1- \lambda) x_2)$. We conclude that $V_0$ is convex.

Next, we show that $V_0 \cap \widetilde{S} \neq \emptyset$. Using \eqref{partial-on-3}, since $\widetilde{S}$ is compact, we can find $s_1 \in \widetilde{S}$ such that $u(x_0) =  u(s_1) +  \max_{1 \leq j \leq N^*} (x_0-s_1) \cdot a_j^*$. Using $a_{k_0}^* \in \chi_u(x_0)$, we have for $y \in \R^d$, $u(y) \geq u(x_0) +  (y-x_0)\cdot a_{k_0}^* $. Thus
\begin{align*}
u(s_1) \geq u(x_0) +  (s_1-x_0)\cdot a_{k_0}^* = u(s_1) +  (s_1-x_0)\cdot a_{k_0}^*+  \max_{1 \leq j \leq N^*} (x_0-s_1) \cdot a_j^*.
\end{align*}
It follows that $\max_{1 \leq j \leq N^*} (x_0-s_1) \cdot a_j^* \leq (x_0-s_1)\cdot a_{k_0}^*$ and hence
$\max_{1 \leq j \leq N^*} (x_0-s_1) \cdot a_j^* = (x_0-s_1)\cdot a_{k_0}^*$. We conclude that
\begin{equation} \label{ux0}
u(x_0)=u(s_1)+(x_0-s_1)\cdot a_{k_0}^*. 
\end{equation} 
Since $a_{k_0}^* \in \chi_u(x_0)$, we have for $y \in \R^d$, $u(y) \geq u(x_0) +  (y-x_0)\cdot a_{k_0}^* =u(s_1)+(y-s_1)\cdot a_{k_0}^*$. This gives $a_{k_0}^* \in \chi_u(s_1)$ and hence $s_1 \in V_0 \cap \widetilde{S}$. 

Let now $s_0$ be the point on $\partial \widetilde{S}$ such that $x_0, s_0$ and $s_1$ are colinear. By the convexity of $V_0$ and since both $x_0$ and $s_1$ are in $V_0$, $s_0$ exists and is in $V_0$. 
Since $u$ is a piecewise linear convex function, it must be that on $V_0$, $u$ is a linear function, i.e.
for all $x \in V_0$, $u(x)=u(s_1)+(x-s_1)\cdot a_{k_0}^*$. In particular, $u(s_0)=u(s_1)+(s_0-s_1)\cdot a_{k_0}^*$ and by \eqref{ux0} we get
$u(x_0)=u(s_0) + (x_0-s_0)\cdot a_{k_0}^*$. Using $y=s_0$ in \eqref{partial-on-3}, we have
\begin{multline*}
u(x_0)=u(s_0) + (x_0-s_0)\cdot a_{k_0}^* \geq u(s_0) +  \max_{1 \leq j \leq N^*} (x_0-s_0) \cdot a_j^* \geq u(s_0) + (x_0-s_0)\cdot a_{k_0}^*
\\
=u(x_0)
\end{multline*}
and thus $u(x_0)=u(s_0) +  \max_{1 \leq j \leq N^*} (x_0-s_0) \cdot a_j^*$ for $s_0 \in \partial \widetilde{S}$. We conclude that for $x \notin \widetilde{S}$
$$
u(x) = \inf_{s \in \partial \widetilde{S}} \ \max_{1 \leq j \leq N^*} (x-s) \cdot a_j^*  +u(s).
$$
The proof is complete. \Qed \end{proof}

Theorems \ref{as-poly} and \ref{formula} provide the formula for the extension of a convex function, defined by the lower part of the convex hull of a finite set of points, to have a given asymptotic cone. The notation for the domain of the function in the following theorem was chosen so that its statement is similar to the one of Theorem \ref{formula}. Recall the notation $k_Y$ for the support function of the convex set $Y$.
\begin{theorem}  \label{formula2}
Let $u$ be a piecewise linear convex function on $\R^d$. Assume that the convex hull $\widetilde{S}$ of the vertices of $u$ is a bounded set. 
If $\partial u(\R^d) = Y$, then for all $x \notin \widetilde{S}$
$$
u(x) = \min_{ s \in \partial \widetilde{S} } u(s) + k_Y(x-s).
$$
\end{theorem}

\begin{proof}
The proof is the same as the proof of Theorem \ref{formula}.  \Qed
\end{proof}

We have the following generalization of Theorem \ref{formula-0} where the infimum in \eqref{partial-on-1} is replaced by an infimum on the boundary of $\widetilde{S}$.

\begin{theorem}  \label{formula-00}
Let $S$ be a closed bounded convex set which defines a convex function $v$ on the projection $\widetilde{S}$ of $S$
onto $\R^d$. Let $D^* = \partial v((\widetilde{S})^\circ)$ and assume that $\tir{D^*} \subset \tir{\Omega^*}$. 
The convex set $K_{\Omega^*}+S$ defines a convex function $u$ on $\R^d$ which extends $v$ to $\R^d$ by 
\begin{equation} \label{partial-on-11}
u(x) = \inf_{y \in \partial \widetilde{S}} v(y) +  \sup_{p \in  \tir{\Omega^*} } \, p \cdot (x-y), x \notin \widetilde{S}.
\end{equation}
\end{theorem}

\begin{proof}
We first note that  \eqref{partial-on-1} also holds for $x \in \widetilde{S}$ as by \eqref{ext-part0}, for all $x \in \R^d$,
$u(x) \leq  \inf_{y \in \widetilde{S}} v(y) +  \sup_{p \in  \tir{\Omega^*} } \, p \cdot (x-y)$. Next, let $x \notin \widetilde{S}$ and suppose that $u(x) = v(y_1) +  \sup_{p \in  \tir{\Omega^*} } \, p \cdot (x-y_1)$ for $y_1 \in \widetilde{S}$ and furthermore
$ \sup_{p \in  \tir{\Omega^*} } \, p \cdot (x-y_1) = p_1 \cdot (x-y_1)$ where we used the compactness of $\widetilde{S}$ and $ \tir{\Omega^*}$. That is, $u(x) = v(y_1) + p_1 \cdot (x-y_1)$. Define
$$
V_0 = \{ \, y \in \R^d,  \sup_{p \in  \tir{\Omega^*} } \, p \cdot (y-y_1) = p_1 \cdot (y-y_1) 
\, \}.
$$
It can be readily checked that $V_0$ is convex and contains both $x$ and $y_1$. Let $y_1'$ denote the point of intersection with $\partial  \widetilde{S}$ of the half-line through $x$ starting at $y_1$. Since $V_0$ is convex, $y_1' \in V_0$ and thus $\sup_{p \in  \tir{\Omega^*} } \, p \cdot (y_1'-y_1) = p_1 \cdot (y_1'-y_1)$. So, by \eqref{partial-on-1}
\begin{equation} \label{cv-bd01}
u(y_1') \leq u(y_1) + p_1 \cdot (y_1'-y_1). 
\end{equation} 
Similarly, the set
$$
V_1 = \{ \, y \in \R^d, p_1 \cdot (x-y) \geq p \cdot (x-y) \, \forall p \in    \tir{\Omega^*} \, \},
$$
 is convex and contains both $x$ and $y_1$. 
Thus $\sup_{p \in  \tir{\Omega^*} } \, p \cdot (x-y_1')= p_1 \cdot (x-y_1')$. Since $v(y_1)=u(y_1)$, it thus follows from \eqref{partial-on-1} and \eqref{cv-bd01}
$$
u(x) \leq u(y_1') + p_1 \cdot (x-y_1') \leq u(y_1) + p_1 \cdot (x-y_1) = u(x),
$$
which shows that the minimum is reached at $y_1' \in \partial  \widetilde{S}$.
\Qed
\end{proof}
The above result can be used to simplify the proof of Theorem \ref{formula}. However, the proof of Theorem \ref{formula} illustrates the structure of piecewise linear convex functions. 
The following result was mentioned in the introduction. 
\begin{lemma} \label{subd-piecewise-c}
Let $u(x) = \max_{i=1,\ldots,k} x\cdot p_i + h_i$, for $p_i \in \R^d$ distinct and $h_i \in \R$
be a piecewise linear convex function on $\R^d$. Then $\partial u(\R^d) = \Conv\{ \, p_1,\ldots, p_k\, \} $. 
\end{lemma}

\begin{proof}

For $x \in \R^d$, $\partial u(x) = \Conv\{ \, p_i, i \in C_x\, \}$, 
where 
$$C_x = \{ \, i, 1\le i \le k, u(x) = x \cdot p_i + h_i\, \},
$$ 
c.f. for example \cite[Chapter 4, Theorem 3]{ioffe2009theory}. It follows that
\begin{equation} \label{sub-conv-p1k}
\partial u (\R^d) \subset \Conv\{ \, p_1,\ldots, p_k\, \}.
\end{equation}
Given a function $\phi$ on  $\R^d$, recall its Legendre transform defined on $\R^d$ by $\phi^*(y)=\sup_{x \in \R^d} x \cdot y - \phi(x)$. 
Let $y \in \Conv\{ \, p_1,\ldots, p_k\, \}$. We have $u^*(y) < \infty$, c.f. \cite[p. 387]{Yau2013} or \cite[Theorem 2.2.7 ]{hormander2007notions} for an explicit expression. Given $x \in \partial u^*(y)$ we have by \cite[Proposition 2.4]{Villani03} $y \in \partial u(x)$. Thus $\Conv\{ \, p_1,\ldots, p_k\, \} \subset 
\partial u(\R^d)$. We conclude that 
$\partial u(\R^d) = \Conv\{ \, p_1,\ldots, p_k\, \}$. \Qed
\end{proof}

\subsection{The second boundary condition in terms of an asymptotic cone}

Let $\nu$ be a Borel measure on $\R^d$. 

\begin{theorem} \cite{Bakelman1994} \label{exi-cone}
Assume that $\int_{\Omega^*} R(p) d p = \nu(\Omega)$. 
There exists a convex function $v$ on $\R^d$ with asymptotic cone $K_{\Omega^*}$ such that
$$
\omega(R,v,E) = \nu(E) 
\text{ for all Borel sets } E \subset \tir{\Omega}.
$$
Such a function is unique up to an additive constant.
\end{theorem}

\begin{corollary} \cite[p. 23]{Oliker03} \label{Oliker-cor}
The function $v$ given by Theorem \ref{exi-cone} satisfies $\chi_v (\tir{\Omega})  = \tir{\Omega^*}$.
\end{corollary}

Extending the function $v$ from Corollary \ref{Oliker-cor} to $\R^d$ using any of the procedures \eqref{ext-asc2} or \eqref{ext-sh2} below results in a function $\hat{v}$ on $\R^d$ which solves $\chi_{\hat{v}} (\R^d) = \chi_{\hat{v}} (\tir{\Omega})  = \tir{\Omega^*}$ by Lemma \ref{asw-34-weak} below, and hence $\hat{v}$ has asymptotic cone $K_{\Omega^*}$ by Lemma \ref{as-2-bvp}. Thus $\hat{v}=v$ and so $\chi_v (\R^d)=\chi_v (\tir{\Omega})  = \tir{\Omega^*}$.

Theorem \ref{exi-cone} and Corollary \ref{Oliker-cor} give existence of a convex solution $v$ on $\R^d$ which solves \eqref{m1m}. Its unicity up to a constant follows from Theorem \ref{exi-cone} and 
Lemma \ref{as-2-bvp}.

The second boundary value problem is often presented as the problem of finding a convex function $u$ on $\Omega$ such that
\begin{align} \label{m1m-new}
\begin{split}
\omega(R,u,E) &= \int_E f(x) d x \text{ for all Borel sets } E \subset \Omega \\ 
\partial u (\Omega)  &= \Omega^*.
\end{split}
\end{align}
The extension $\tir{u}$ based on \eqref{ext-sh} of a solution $u$ of \eqref{m1m-new}  solves \eqref{m1m}, c.f. Lemma \ref{lem-2-ext} below. Since solutions of \eqref{m1m} are unique up to a constant, a solution of \eqref{m1m} must be the extension of a solution of \eqref{m1m-new}. 

\subsection{Convex extensions revisited} \label{revisited}

Recall that $\Omega$ and $\Omega^*$ are assumed to be convex. We prove 
below that the two extensions $\tilde{u}$ and $\tir{u}$ given by \eqref{ext-asc} and \eqref{ext-sh} are equal. For that we will need the following lemma

\begin{lemma} \label{lem-2-ext}
Let $u_0 \in C(\tir{\Omega})$ such that $\partial u_0(\Omega)=\Omega^*$. For the convex extensions $\tilde{u}$ and $\tir{u}$ given respectively by \eqref{ext-asc} and \eqref{ext-sh}, the epigraph $M$ of $\tilde{u}$ has asymptotic cone $A+K_{\Omega^*}$ for $A \in M$ and
$\chi_{\tir{u}}(\tir{\Omega}) =\chi_{\tir{u}}(\R^d) = \tir{\Omega^*}$.
\end{lemma}

\begin{proof}
Put $\mu_{max} = \max_{x \in \tir{\Omega}} u_0(x)$. By Theorem \ref{formula-0}, the epigraph of $\tilde{u}$ is equal to $S+K_{\Omega^*}$ where $S$ is the closed bounded convex set
$\{ \, (x,\mu), x \in \tir{\Omega}, u_0(x) \leq \mu \leq \mu_{max}
\, \}$. By Theorem \ref{as-poly}, $S+K_{\Omega^*}$ has asymptotic cone $A+K_{\Omega^*}$ for $A \in M$. Note that by construction, $\tilde{u}=u_0$ on $\tir{\Omega}$ and  \eqref{ext-asc} gives the values of $\tilde{u}$ outside of $\tir{\Omega}$.



The claim that $\tir{u}$ is a convex extension of $u$ with $\chi_{\tir{u}}(\tir{\Omega}) = \tir{\Omega^*}$ follows from \cite[Lemma 4]{awanou2019uweakcvg}. \Qed
\end{proof}

\begin{lemma} \label{lem-2-ext-proof}
Let $u_0 \in C(\tir{\Omega})$ such that $\partial u_0(\Omega)=\Omega^*$. The convex extensions $\tilde{u}$ and $\tir{u}$ given respectively by \eqref{ext-asc} and \eqref{ext-sh} are equal.
\end{lemma}

\begin{proof} 
For a Borel set $E \subset \tir{\Omega}$ we define
$\omega(R,\tilde{u},E) := \omega(R,u_0,E \cap \Omega)$ and 
$ \omega(R,\tir{u},E)$ $:=\omega(R,u_0,E \cap \Omega)$, that is, $\omega(R,\tilde{u},E) =
 \omega(R,\tir{u},E)$ for all Borel sets $E \subset  \tir{\Omega}$. By Lemma \ref{lem-2-ext} the epigraph $M$ of $\tilde{u}$ has asymptotic cone $A+K_{\Omega^*}$ for $A \in M$ and
$\chi_{\tir{u}}(\tir{\Omega}) = \tir{\Omega^*}$. Thus $\tilde{u}$ has asymptotic cone $K_{\Omega^*}$ and by Lemma \ref{as-2-bvp}, $\tir{u}$ also has asymptotic cone $K_{\Omega^*}$. We conclude from Theorem \ref{exi-cone} that $\tilde{u}=\tir{u}$ since $\tilde{u}=\tir{u}=u_0$ on $\Omega$. \Qed
\end{proof}

The results we now prove were used in the proof of the equivalence of \eqref{m1m} and \eqref{m1m-new} in section \ref{R-curvature}. Let $E \subset \Omega$ 
and let $u$ be a 
convex function on $\Omega$. To extend $u|_E$, one may want to take into account $\partial u(\partial E)$. We thus consider the following variant of \eqref{ext-sh}
\begin{equation} \label{ext-sh2}
\hat{u}(x) = \sup \{ \, u(y) +  (x-y) \cdot z, y \in \tir{E}, z \in \partial u(y)  \, \}.
\end{equation}
First, we note
\begin{lemma} \label{subd-closed}
Let $E \subset \Omega$, $E$ bounded, $\Omega$ open and $u \in C(\Omega)$. Then $\partial u(\tir{E})$ is closed.
\end{lemma}

\begin{proof}
Let $p_n \in \partial u(\tir{E})$ and assume that $p_n \to p, p \in \R^d$. Let $a_n \in \tir{E}$ such that $p_n \in \partial u(a_n)$. For all $x \in \Omega$
$
u(x) \geq u(a_n) + p_n \cdot (x-a_n)
$.
Since $E$ is bounded, we may assume that $a_n \to a$ for $a \in \tir{E}$. We thus obtain $u(x) \geq u(a) + p \cdot (x-a)$ for all $x \in \Omega$. It follows that $p \in \partial u(a)$ and $\partial u(\tir{E})$ is closed. \Qed
\end{proof}

As with \cite[Lemmas 3 and 4]{awanou2019uweakcvg} we have
\begin{lemma} \label{asw-34-weak}
Let $E \subset \Omega$, $E$ bounded and $u$ a bounded convex function on $\Omega$. The extension $\hat{u}$ of $u|_{\tir{E}}$ given by \eqref{ext-sh2} is convex on $\R^d$ and if $\partial u(\tir{E})$ is bounded, for all $x \in \tir{E}$ we have $\chi_{\hat{u}} (x) = \partial u(x)$. Moreover
$$
\partial u(\tir{E}) = \chi_{\hat{u}}(\tir{E}) \subset  \chi_{\hat{u}} (\R^d) \subset \Conv(\partial u(\tir{E})). 
$$
\end{lemma}

\begin{proof} We only need to prove that for all $x \in \tir{E}$, $\chi_{\hat{u}} (x) \subset \partial u(x)$.
The other statements are proved as for  \cite[Lemmas 3 and 4]{awanou2019uweakcvg}, using the observation from Lemma \ref{subd-closed} that $\partial u(\tir{E})$ is closed.

Let $x \in \tir{E}$ and $p \in \chi_{\hat{u}} (x)$. Let $y \in \R^d$. We have
$\hat{u}(y) \geq \hat{u}(x) + p \cdot (y-x) = u(x) +  p \cdot (y-x)$. As $\tir{E}$ and  $\partial u(\tir{E})$ are bounded, we can find $y_0 \in \tir{E}$ and $z_0$ in $\partial u(y_0)$ such that $\hat{u}(y) = u(y_0)+ z_0 \cdot (y-y_0)$. If $y \in \Omega$, we have $u(y) \geq u(y_0)+ z_0 \cdot (y-y_0) = \hat{u}(y)  \geq u(x) +  p \cdot (y-x)$ which shows that $p \in \partial u(x)$. This completes the proof. \Qed
\end{proof}

We note that $\tir{\partial u(E)} \subset \partial u(\tir{E})$ and $\partial u(\tir{E})$ can be larger than $\tir{\partial u(E)}$. However, if $\partial u(E)$ is convex, it follows from \cite[Lemma 4]{awanou2019uweakcvg} that $\tir{\partial u(E)} =\chi_{\tir{u}}(\tir{E})$ where we recall that $\tir{u}$ is the extension of $u$ based on \eqref{ext-sh} which does not take into account $\partial E$.

The extension $\tilde{u}$ of $u|_E$ given by \eqref{ext-asc} would take into account only $\tir{\partial u(E)}$. We therefore consider the following variant
\begin{equation} \label{ext-asc2}
\check{u}(x) = \inf \{ \, u(y) + \sup_{z \in \partial u (\tir{E}) } (x-y) \cdot z,  y \in \tir{E} \, \}.
\end{equation}
Analogous to Lemma \ref{lem-2-ext-proof}, we have

\begin{lemma} \label{equal-2-ext-closed}
Let $E \subset \Omega$, $E$ bounded, convex and $u$ a bounded convex function on $\Omega$. Assume also that 
$\partial u(\tir{E})$ is bounded and convex. The  extensions $\hat{u}$ and $\check{u}$ of $u|_{\tir{E}}$ given by \eqref{ext-sh2} and \eqref{ext-asc2} are equal. 
\end{lemma}

\begin{proof} The proof is the same as for Lemma \ref{lem-2-ext-proof}. Put $D^*=\partial u(\tir{E})=\chi_{\hat{u}}(\tir{E}) $ and let $K_{D^*}$ denote 
the convex set associated with $D^*$ following \eqref{cone-def}. Then both $\hat{u}$ and $\check{u}$ have the same asymptotic cone $K_{D^*}$ and satisfy the same Monge-Amp\`ere equation on $\tir{E}$. \Qed
\end{proof}

We finish this subsection with an observation on the convex extensions of a piecewise linear convex function $u$ on $\Omega$. The result is used in the proof of Lemma \ref{gamma-lem}  below.
Let now $E \subset \Omega$ be a bounded convex polygonal domain. 

We may write for $x \in \Omega$, $u(x) = \max_{i=1,\ldots,k} x\cdot p_i + h_i$, for $p_i \in \R^d$ distinct and $h_i \in \R$. We assume 
that this expression also holds on $E$, or equivalently, all vertices of $u$ on $\Omega$ are vertices in $E$. The expression $\max_{i=1,\ldots,k} x\cdot p_i + h_i$ defines a convex extension of $u$ to $\R^d$ which we also denote by $u$.

It is known that $Y=\partial u(\R^d)$ is the convex polygonal domain $\Conv\{ \, p_1,\ldots, p_k\, \}$, c.f. Lemma \ref{subd-piecewise-c}. Let $p \in \Conv\{ \, p_1,\ldots, p_k\, \}$ and $x \in \R^d$ such that $p \in \partial u (x)= \Conv\{ \, p_i, i \in C_x\, \}$. This means that the hyperplanes $\{ \, (x,z) \in \R^{d+1}, z= x \cdot p_i + h_i, i \in C_x 
\, \} $ have a non-empty intersection and since $u(x)=\max_{i=1,\ldots,k} x\cdot p_i + h_i$ on $E$ as well, there is $y \in E$ such that $u(y)= x\cdot p_i + h_i, i \in C_x$, i.e. $p \in \partial u(y)$. Thus $\Conv\{ \, p_1,\ldots, p_k\, \}=Y=\partial u(\R^d) \subset \partial u(E) \subset \partial u (\R^d) \subset \Conv\{ \, p_1,\ldots, p_k\, \}$ where we used \eqref{sub-conv-p1k}. We conclude that $Y=\partial u (E)$ is a convex polygonal domain.


\begin{lemma} \label{ext-plc}
Let $E \subset \Omega$ be a bounded convex polygonal domain 
and let $u$ be a piecewise linear convex function on $\Omega$. Assume that all the vertices of $u$ in $\Omega$ are in $E$. Then the extensions $\check{u}$ and $\hat{u}$ of $u|_E$ based respectively on asymptotic cones and supporting hyperplanes, i.e. \eqref{ext-asc2} and \eqref{ext-sh2} are equal to $u$ on $\Omega$.
\end{lemma}

\begin{proof}
Note that $E$ is closed and $\partial u(E)$ is bounded and convex. By Lemma \ref{equal-2-ext-closed}, $\check{u} = \hat{u}$ on $\R^d$. We show that $\hat{u}=u$ on $\Omega$. Let us assume that on $\Omega$, $u(x) = \max_{i=1,\ldots,k} x\cdot p_i + h_i$, for $p_i \in \R^d$ distinct and $h_i \in \R$. We define
$$
v(x) = \sup \{ \, u(y) + p_i \cdot (x-y), y \in E, p_i, i \in C_y\, \}.
$$
By definition, for all $x \in \R^d$, $\hat{u}(x) \geq v(x)$. Let $y \in E$ and $z \in \partial u(y)$. Put $z=\sum_{i \in C_y} \lambda_i p_i$, $0 \leq \lambda_i \leq 1$ and $\sum_{i \in C_y} \lambda_i=1$. Since $v(x) \geq u(y) + p_i \cdot (x-y)$ for all $i \in C_y$, we obtain $v(x) \geq u(y) + z \cdot (x-y)$ and thus $v(x) \geq \hat{u}(x)$. We conclude that $v=\hat{u}$. 

Next, recall that by definition of $C_y$, if $p_i \in \partial u(y)$ and $i \in C_y$, we have $u(y)= y \cdot p_i + h_i$. It follows that $v(x) = \sup \{ \, x \cdot p_i + h_i, i \in C_y, y \in E\, \}=\max_{i=1,\ldots,k} x\cdot p_i + h_i$. We conclude that
$\hat{u}=v=u$ on $\Omega$. \Qed
\end{proof}


\section{Weak convergence of Monge-Amp\`ere measures for discrete convex functions} \label{wcg}

\begin{definition} \label{def-cv-mesh}
We say that $u_h$ converges to a function $u$ uniformly on $\tir{\Omega}$ in the sense of \cite{awanou2019uweakcvg}  if and only if for each sequence $h_k \to 0$ and for all $\epsilon >0$, there exists $h_{-1} >0$ such that for all $h_k$, $0< h_k < h_{-1}$, we have
$$
\max_{x \in \mathcal{N}_{h_k}^1} |u_{h_k}(x) - u(x)| < \epsilon.
$$
\end{definition}

\begin{theorem} \cite[Theorem 7]{awanou2019uweakcvg}  \label{old-02}
 Let $u_h$ 
 converge to a convex function $u$ uniformly on $\tir{\Omega}$ in the sense of \cite{awanou2019uweakcvg}. 
Assume also that $u$ is bounded. 
Then
$\omega(R,\Gamma_1(u_h),.)$ weakly converges to $\omega(R,u,.)$.
\end{theorem}


\begin{theorem}  \cite[Lemma 6]{awanou2019uweakcvg}  \label{old-03}
Let $u_{h}$ be discrete convex. If $u_h$ converges uniformly on compact subsets of $\Omega$ to a function $u \in C(\Omega)$ in the sense of \cite{awanou2019uweakcvg}, $u$ is convex on $\Omega$. 
\end{theorem}

\begin{theorem}  \cite[Theorem 12]{awanou2019uweakcvg}\label{old-04}
 Let $u_h$ be a family of discrete convex functions in the sense of \cite{awanou2019uweakcvg} 
 such that
$|u_h| \leq C$ for a constant $C$ independent of $h$ and $\chi_{\Gamma_1(u_h)}(\mathcal{N}_h^1)$ is uniformly bounded. Assume furthermore that $u_h$ is uniformly Lipschitz on $\tir{\Omega}$ and $u_h=\Gamma_1(u_h)$ on 
$\partial \Conv(\mathcal{N}_h^1)$.  
Then  there is a subsequence $h_k$ such that $u_{h_k}$ converges uniformly in the sense of \cite{awanou2019uweakcvg} to a convex function $v$ on $\tir{\Omega}$.
\end{theorem}
The above theorem gives not only the convergence of a subsequence of $\Gamma_1(u_h)$ but also the convergence of a subsequence of $u_h$. For the latter, we used a piecewise linear interpolant which is defined on a domain containing $\tir{\Omega}$, and is equal to $\Gamma_1(u_h)$ outside of $ \Conv(\mathcal{N}_h^1)$. The assumption $u_h=\Gamma_1(u_h)$ on 
$\partial \Conv(\mathcal{N}_h^1)$ is needed to make the interpolant globally Lipschitz. The latter assumption holds for the Dirichlet problem \cite[Lemma 5]{awanou2019uweakcvg}. 







Recall that for $V=V_{max}$, $\omega_V :=\omega_a$. The results in \cite{awanou2019uweakcvg}  are essentially for mesh functions and their convex envelopes. Theorems \ref{old-02}--\ref{old-04} hold for $\Gamma_2(u_h)$, $\partial_{V_{max}} u_h$ with the following definition of uniform convergence on $\tir{\Omega}$ which uses $\mathcal{N}_{h}^2$ whereas Definition \ref{def-cv-mesh} uses $\mathcal{N}_{h}^1$. Discrete convexity was defined in section \ref{disc-R-curvature}, Definition \ref{def-dc}.

\begin{definition} \label{def-cv-mesh2}
We say that $u_h$ converges to a function $u$ uniformly on $\tir{\Omega}$ if and only if for each sequence $h_k \to 0$ and for all $\epsilon >0$, there exists $h_{-1} >0$ such that for all $h_k$, $0< h_k < h_{-1}$, we have
$$
\max_{x \in \mathcal{N}_{h_k}^2} |u_{h_k}(x) - u(x)| < \epsilon.
$$
\end{definition}

\begin{theorem} \label{main0-wcg}
 Let $u_h$ be a family of discrete convex functions  such that $u_h$ 
 converges to a convex function $u$ uniformly on $\tir{\Omega}$. 
Assume also that $u$ is bounded. 
Then
$\omega_a(R,u_h,.)$ weakly converges to $\omega(R,u,.)$.
\end{theorem}

\begin{theorem} \label{new-03}
Let $u_{h}$ be discrete convex. If $u_h$ converges uniformly on compact subsets of $\Omega$ to a function $u \in C(\Omega)$, $u$ is convex on $\Omega$. 
\end{theorem}

For the analogue of Theorem \ref{old-04}, note that $\mathcal{N}_h^1  \subset \tir{\Omega} $ and the convex extension to $\R^d$ of $\Gamma_1(u_h)$ is used in \cite{awanou2019uweakcvg} to have an interpolant defined on $\tir{\Omega}$. Lemma \ref{pre-lip-lem} gives the Lipschitz continuity on $\tir{\Omega} \cap \mathbb{Z}_h^d$ of a discrete convex function with asymptotic cone $K$. However $\partial \Omega \cap \mathbb{Z}_h^d$ may be empty. But we can use the Lipschitz continuity of $u_h$ on $\Omega_h$. 
An interpolant of $u_h$ equal to $\Gamma_2(u_h)$ outside of $\Conv(\Omega_h)$ can be constructed.

\begin{theorem} \label{old-14}
Let $u_h$ be a family of discrete convex functions 
 such that
$|u_h| \leq C$ for a constant $C$ independent of $h$ and $\chi_{\Gamma_2(u_h)}(\mathcal{N}_h^2)$ is uniformly bounded. Assume furthermore that $u_h$ is uniformly Lipschitz on $\tir{\Omega}$ and $u_h=\Gamma_2(u_h)$ on 
$\partial \Conv( \Omega_h  )$.  Then  there is a subsequence $h_k$ such that $u_{h_k}$ converges uniformly to a convex function $v$ on $\tir{\Omega}$.
\end{theorem}

If $\Omega$ is a rectangle, and $u_h$ is discrete convex with asymptotic cone $K$, by Lemma \ref{pre-lip-lem}, $u_h$ is Lipschitz on $\tir{\Omega} \cap \mathbb{Z}_h^d$ and a piecewise linear interpolant $I(u_h)$ of $u_h$ on $\Conv(\tir{\Omega} \cap \mathbb{Z}_h^d)$ is uniformly Lipschitz on $\tir{\Omega}$ and uniformly bounded. By the Arzela-Ascoli theorem, there is a subsequence $h_k$ such that $u_{h_k}$ converges uniformly to a function $v$ on $\tir{\Omega}$ which is convex by Theorem \ref{new-03}. We therefore have the following theorem.

\begin{theorem} \label{boom-thm} 
Assume that $\Omega$ is a rectangle and $u_h$ is discrete convex with asymptotic cone $K$. There is a subsequence $h_k$ such that $u_{h_k}$ converges uniformly to a convex function $v$ on $\tir{\Omega}$.
\end{theorem}

We will use the above theorem in section \ref{cvg-visc} for stencils $V=V_{\kappa} \cap V_{max}$ with size uniformly bounded and allow $\kappa \to \infty$. 

\begin{lemma} \label{full-subd}
If a mesh function $u_h$ solves \eqref{m2d3} for $f>0$ on $\Omega$, then $\partial_V u_h (\Omega_h)=Y$ for $V=V_{max}$. 
\end{lemma}

\begin{proof}
By assumption, a solution of  \eqref{m2d3} has asymptotic cone $K$. Since $f>0$ on $\Omega$,  $\partial_V u_h(x) \neq \emptyset$ for $x \in \Omega_h$ and $u_h$ is discrete convex by Lemma \ref{auto-dc}. By Theorem \ref{from-out} $\partial \Gamma_2(u_h)(x) =  \partial_V u_h(x)$. But for $x \neq y$,
$\partial \Gamma_2(u_h)(x) \cap \partial \Gamma_2(u_h)(y)$ is a set of measure 0 by \cite[Lemma 1.1.8]{Guti'errez2001}. We conclude that $\int_{\cup_{x \in \Omega_h}  \partial_V u_h (x)} R(p) dp = \sum_{x \in \Omega_h} \omega_a(R,u_h,\{\, x \,\})  = \int_{Y} R(p) d p$ where we used \eqref{mass-conservation}. Since by Lemma \ref{inc-sub-lem} we have $\partial_V u_h(\Omega_h) \subset Y$ 
we get $\cup_{x \in \Omega_h}  \partial_V u_h (x) = Y$ up to a set of measure 0. Since $Y$ is a polygon and for each $x \in \Omega_h$, $\partial_V u_h (x)$ is also a polygon, we obtain $\partial_V u_h (\Omega_h)=Y$. \Qed
\end{proof}

Theorem \ref{boom-thm} is enough to extract a converging subsequence for solutions of \eqref{m2d3}. In addition, by \cite[Lemma 10]{awanou2019uweakcvg}, the uniform convergence of $u_h$ implies the uniform convergence of the convex envelopes $\Gamma_2(u_h)$. 
The following lemma gives conditions under which $\chi_{\Gamma_2(u_h)}(\mathcal{N}_h^2)$ is uniformly bounded. 
It can be used to extract a convergent subsequence from $\Gamma_2(u_h)$ when $V=V_{max}$. 

\begin{lemma} \label{key-lem}
Assume that  
$u_h$ is discrete convex with asymptotic cone $K$. 
Then $\chi_{\Gamma_2(u_h)}(\Conv(\Omega_h)) \subset Y$ 
and $\chi_{\Gamma_2(u_h)}(\mathcal{N}_h^2)$ $
=\chi_{\Gamma_2(u_h)}(\R^d)$ is uniformly bounded. 

\end{lemma}

\begin{proof}

{\bf Part 1 } We first  prove that if $z \in \Omega_h$ and 
$\Gamma_2(u_h)(z) = u_h(z)$, then $\chi_{\Gamma_2(u_h)}(z)$ $\subset \partial_V u_h(z) \subset Y$. 

Let then $p \in \chi_{\Gamma_2(u_h)}(z)$. We have for all $s \in \R^d$, $\Gamma_2(u_h)(s) \geq 
\Gamma_2(u_h) (z) + p \cdot (s-z)$. If $s \in  \mathcal{N}_h^2$, we get $u_h(s) \geq \Gamma_2(u_h)(s) \geq 
u_h(z) + p \cdot (s-z)$. In particular, for $e \in V(z) \subset V_{max}(z)$ and $s=z+ h e$ we obtain $u_h(z+ h e) \geq 
u_h(z) + p \cdot (h e)$. This proves that $\chi_{\Gamma_2(u_h)}(z) \subset \partial_V u_h(z)$. By Lemma \ref{inc-sub-lem}, 
$ \partial_V u_h(z) \subset Y$.

{\bf Part 2 } We prove that $\chi_{\Gamma_2(u_h)}(\Conv(\Omega_h)) \subset Y$. We use notions of faces of polyhedra reviewed in section \ref{comp-section}. 
Recall from Definition \ref{def-conv-subd} the convex subdivision $\mathcal{T}_h$ associated with the piecewise linear convex function $\Gamma_2(u_h)$ on $\Conv(\mathcal{N}_h^2)$. If $\sigma \in \mathcal{T}_h$, $\sigma$ is a convex polyhedron in $\R^d$, $\Conv(\mathcal{N}_h^2) = \cup_{\sigma \in \mathcal{T}_h} \sigma$, if $\sigma, \tau \in \mathcal{T}_h$, then $\sigma \cap \tau \in \mathcal{T}_h$, and if $\sigma \in \mathcal{T}_h$ and $\tau \subset \sigma$, $\tau \in \mathcal{T}_h$ if and only if $\tau$ is a face of $\sigma$. On each $d$-dimensional cell $\sigma \in \mathcal{T}_h$, $\Gamma_2(u_h)$ is a linear function. 

Recall that for a  vertex $x$ of $\mathcal{T}_h$, we have $\Gamma_2(u_h)(x) = u_h(x)$, c.f. for example \cite{awanou2019uweakcvg}. For $x$ in the interior of $\Conv(\mathcal{N}_h^2)$, let $\omega(x)$ denote the collection of the $d$-dimensional cells $\sigma \in \mathcal{T}_h$ such that $x \in \sigma$. It is known, using for example \cite[Theorem 5]{awanou2019uweakcvg} that $\partial \Gamma_2(u_h)(x)$ is the convex hull of the constant gradients of $ \Gamma_2(u_h)$ on elements $\sigma \in \omega(x)$. 

Let $z \in \Conv(\Omega_h)$ and let $\tau$ denote a $d$-dimensional cell in $\mathcal{T}_h$ such that $z \in \tau$. If all vertices of $\tau$ are in $\R^d \setminus \Conv(\Omega_h)$, then $z \notin \Conv(\Omega_h)$. Thus, at least one vertex $x$ of $\tau$ is in $\Omega_h$.  

If $z \in \tau^\circ$, then $\partial \Gamma_2(u_h)(z) = \{ \, p \, \}$ where $p$ is the gradient of $ \Gamma_2(u_h)$ at $z$. Thus $\partial \Gamma_2(u_h)(z)  \subset \partial \Gamma_2(u_h)(x)$, and since $\Gamma_2(u_h)(x) = u_h(x)$ we get $\partial \Gamma_2(u_h)(z)  \subset Y$.

If $z \in \partial \tau$ and $z$ is a vertex of $\tau$, we must have $z \in \Omega_h$ since $z \in \mathcal{N}_h^2$ and $z \in \Conv(\Omega_h)$. Also, $ \Gamma_2(u_h)(z) = u_h(z)$. We then have $\partial \Gamma_2(u_h)(z)  \subset Y$.

Suppose $z \in \partial \tau$ and $z$ is not a vertex of $\tau$. Let $\gamma$ be a lowest dimensional cell such that $z \in \gamma$. At least one vertex $x$ of $\gamma$ must be in $\Omega_h$. For $\sigma \in \omega(z)$, $\sigma \cap \gamma$ is a cell of $\mathcal{T}_h$ which must be a face of $\gamma$ and contains $z$. By the assumption on $\gamma$, we have $\sigma \cap \gamma=\gamma$ and hence $x \in \sigma$, i.e. $\sigma \in \omega(x)$. We conclude that $\omega(z) \subset \omega(x)$ and hence $\partial \Gamma_2(u_h)(z)  \subset \partial \Gamma_2(u_h)(x)$.  As above, we obtain $\partial \Gamma_2(u_h)(z)  \subset Y$.

{\bf Part 3 } 
Put $D^* = \partial \Gamma_2(u_h)(\Conv(\Omega_h)^\circ)$. Let $S$ be a closed convex set the projection of which on $\R^d$ is equal to $\Conv(\Omega_h)$. We have $\tir{D^*} \subset Y$. By Theorem \ref{formula}, the convex set $S+K$ defines a convex function $v$ on $\R^d$ which extends $\Gamma_2(u_h)|_{\Conv(\Omega_h)}$ and such that $v(z)$ for $z \in \R^d \setminus \Conv(\Omega_h)$ is given by Theorem \ref{formula}, i.e.
\begin{equation} \label{temp-ext}
v(z) = \inf_{y \in \partial \Conv(\Omega_h) } \Gamma_2(u_h)(y) + k_Y(z-y).
\end{equation}
By Lemma \ref{lem-2-ext}, $\chi_v(\R^d) = Y$. Thus, there exists a constant $C$ independent of $h$ such that for all , 
\begin{equation} \label{punch01}
|v(x) - v(y)| \leq C | x-y|, \forall x, y \in  \mathcal{N}_h^2,
\end{equation}
where $|x|^2= x \cdot x$.   

Moreover, for $x \in  \mathcal{N}_h^2 \setminus \Omega_h$, $u_h(x) =  \inf_{y \in \partial \Omega_h} u_h(y) + k_Y(x-y)$. 
Therefore, by \eqref{temp-ext}, 
 $v(x) \leq u_h(x)$ for all $x \in  \mathcal{N}_h^2 \setminus \Omega_h$. Since by construction $v=\Gamma_2(u_h)$ on $\Conv(\Omega_h)$, we obtain $v(x) \leq u_h(x)$ for all $x \in  \mathcal{N}_h^2$. As $\Gamma_2(u_h)$ is the largest convex function majorized by $u_h$ on 
 $\mathcal{N}_h^2$, we obtain 
 \begin{equation} \label{punch02}
 v(x) \leq \Gamma_2(u_h)(x) \, \text{for all} \, x \in  \mathcal{N}_h^2, 
 \end{equation}
which can also be seen by taking a supporting hyperplane to the graph of $v$ and the definition of $\Gamma_2(u_h)$. 


Let now $x \in \mathcal{N}_h^2 \setminus \Conv(\Omega_h)$ and $q \in \chi_{\Gamma_2(u_h)}(x)$. We have 
$q \cdot (z-x) \leq \Gamma_2(u_h)(z) - \Gamma_2(u_h)(x)$ for all $z \in \R^d$. Let $e_i, i=1,\ldots, d$ be a set of independent vectors such that $z_i=x + c_i e_i$ is in $\Omega_h$ for $c_i >0$. Using $z_i \in \Omega_h$, \eqref{punch02} and \eqref{punch01}, we obtain
$$
q \cdot (c_i e_i) \leq v(z_i) - v(x) \leq C |z_i-x| = c_i  C |e_i|. 
$$
We conclude that $q \cdot e_i/|e_i| \leq C, i=1,\ldots,d$. 

Next, let $l_i >c_i > 0$ such that $s_i=x - l_i e_i, i=1,\ldots, d$ is not in $\Conv(\mathcal{N}_h^2)$. We have using Theorem \ref{formula-0}, 
$\Gamma_2(u_h)(s_i) \leq \Gamma_2(u_h)(z_i) + k_Y(s_i-z_i)$. Thus 
\begin{multline*}
l_i q \cdot (- e_i) = q \cdot (s_i-x) \leq \Gamma_2(u_h)(s_i) - \Gamma_2(u_h)(x) \leq
 \Gamma_2(u_h)(s_i) - v(x) \\
  \leq  \Gamma_2(u_h)(z_i) + k_Y(s_i-z_i) - v(x) = v(z_i) - v(x) + k_Y(s_i-z_i) \\
  \leq  c_i  C |e_i| + 2 c_i k_Y(-e_i).
\end{multline*}
We conclude that $q \cdot (-e_i)/|e_i| \leq C + 2 k_Y(-e_i/|e_i|)$. Since $Y$ is bounded, it follows that $q \cdot (\pm e_i)/|e_i| \leq C$ for a constant $C$ independent of $h$. This proves that $\chi_{\Gamma_2(u_h)}(\mathcal{N}_h^2)$ is uniformly bounded. By Lemma \ref{lem-2-ext} $\chi_{\Gamma_2(u_h)}(\mathcal{N}_h^2)
=\chi_{\Gamma_2(u_h)}(\R^d)$. 
\Qed.
\end{proof}



For $f>0$ on $\Omega$, by Theorem \ref{from-out} and Lemma \ref{full-subd}, as we will see, convergence of the discretization \eqref{m2d3} for $V=V_{max}$ reduces to proving convergence results for the convex envelope $\Gamma_2(u_h)$. 
Analogous to Lemma \ref{key-lem}, we have

\begin{lemma} \label{gamma-lem}
Assume that $u_h$ is discrete convex with asymptotic cone $K$. Then $\chi_{\Gamma_1(u_h)}(\Conv(\Omega_h)) \subset Y$ 
and $\chi_{\Gamma_1(u_h)}(\mathcal{N}_h^1)$ $
=\chi_{\Gamma_1(u_h)}(\R^d)$ is uniformly bounded. 
\end{lemma}







\section{Convergence of the discretization} \label{cvg}


Recall the truncation $\tilde{f}$ of $f$ defined by \eqref{modified-f}.
Set
$$\tilde{f}(t)=0 \text{ outside } \Omega.$$

Given a Borel set $E \subset \tir{\Omega}$ we define 
$$
\nu_{h} (E) = \sum_{x \in B \cap \Omega_h}   \int_{E_{x}} \tilde{f}(t) dt.
$$

We recall that a sequence $\mu_n$ of Borel measures converges to a Borel measure $\mu$ if and only if $\mu_n(B) \to \mu(B)$ for any Borel set $B$ with $\mu(\partial B)=0$. Let $h_k$ be a sequence converging to 0. Then $\nu_{h_k}$ weakly converges to the measure $\nu$ defined by $\nu(B)=\int_B \tilde{f}(t) dt$.



In this section, we first give the convergence of the discretization for $V=V_{max}$. 
We then consider the case $V$ not necessarily equal to $V_{max}$ and $f \in C(\Omega)$. We finish with a result about convergence of approximations when $\Omega^*$ is approximated by polygons.

\subsection{Convergence when $\partial_V u_h(\Omega_h) = Y$}

When $V=V_{max}$, by Lemma \ref{full-subd} $\partial_V u_h(\Omega_h) = Y$ for a solution of \eqref{m2d3}. Recall from Theorem \ref{stability-thm} that solutions $u_h$ of \eqref{m2d3} with $u_h(x^1_h)=\alpha$ for an arbitrary number $\alpha$ and $x^1_h \in \Omega_h$, are uniformly bounded in $h$. 

\begin{theorem} \label{cvg-thm}
For $f>0$ on $\Omega$ and $V=V_{max}$, solutions $u_h$ of \eqref{m2d3} with $u_h(x^1_h)=\alpha$ for 
$x^1_h$ in $\Omega_h$ 
and $x^1_h \to x^1 \in \tir{\Omega}$, converge uniformly on $\tir{\Omega}$ to the unique solution $u$ of \eqref{m1m} with $u(x^1)=\alpha$.
\end{theorem}

\begin{proof} 
{\bf Part 1 } Existence of a converging subsequence with converging measures.

By Remark \ref{dc-in-the-sense} of section \ref{wcg}, a discrete convex function is discrete convex as defined in \cite{awanou2019uweakcvg}. Since $V=V_{max}$, $u_h=\Gamma_1(u_h)$ on $\Omega_h$. 
By Lemma \ref{gamma-lem} $\chi_{\Gamma_1(u_h)}( \mathcal{N}_h^1 )  \subset \chi_{\Gamma_1(u_h)}( \R^d)$ is uniformly bounded. Thus, by \cite[Lemma 15]{awanou2019uweakcvg} we have 
$$
|u_h(x) - u_h(y)| \leq C ||x-y||_1, \forall x, y \in \mathcal{N}_h^1,
$$
i.e. the discrete convex mesh functions $u_h$ are uniformly Lipschitz on $\tir{\Omega}$.   As $u_h(x^1_h)=\alpha$ we have $|u_h|\leq C$ with $C$ independent of $h$. Therefore, by Theorem \ref{old-04}, there exists a subsequence $h_k$ such that $u_{h_k}$ converges uniformly on $\tir{\Omega}$, as defined in Definition \ref{def-cv-mesh}, to a convex function $v$ on $\tir{\Omega}$, which is necessarily bounded. By 
Lemma \ref{gamma-lem}, Theorems \ref{from-out}, \ref{old-01} and \ref{old-02}, 
$\omega_a(R,u_{h_k},.)=\omega(R,\Gamma_2(u_{h_k}),.) = \omega(R,\Gamma_1(u_{h_k}),.)$ 
weakly converges to $\omega(R,v,.)$. We conclude that
$$
\omega(R,v, E ) = \int_E \tilde{f}(t) dt = \omega(R,u, E ),
$$
since from \eqref{m2d3}, $\omega_a(R,u_{h},E)=\nu_{h} (E)$ for all Borel sets $E \subset \Omega$.

{\bf Part 2 } The limit function has asymptotic cone $K$.

We claim that $u_{h_k}$ converges pointwise, up to a subsequence, to $v$ on $\R^d\setminus \tir{\Omega}$ with $v$ given for $x \notin \tir{\Omega}$ by
\begin{equation} \label{limit-ext}
v(x) = \inf_{s \in \partial \tir{\Omega}} \ v(s) + \max_{j=1,\ldots,N^*} (x-s) \cdot a_j^*.
\end{equation}
Let $x_h \to x$ as $h \to 0$. We may assume that $x_h \notin \Omega_h$. Therefore
$u_h(x_h) = u_h(y_h) + \max_{j=1,\ldots,N^*} (x_h-y_h) \cdot a^*_j$ for $y_h \in \partial \Omega_h$. Let $y_{h_k}$ be a subsequence such that $y_{h_k} \to y \in \tir{\Omega}$. Since $y_h \in \partial \Omega_h$, we have $y \in \partial \tir{\Omega}$. If necessary, by taking a further subsequence, we use the uniform convergence of $u_{h_k}$ to $v$ on $\tir{\Omega}$ to conclude that $u_{h_k}(y_{h_k}) \to v(y)$. We may write
$ \max_{j=1,\ldots,N^*} (x_{h_k}-y_{h_k}) \cdot a^*_j = (x_{h_k}-y_{h_k}) \cdot a^*_{j_k}$, and again up to a subsequence, this converges to
$(x-y) \cdot a^*_l$ for some $l \in \{ \,1,\ldots,N^* \, \}$. Since $ (x_{h_k}-y_{h_k}) \cdot a^*_{j_k} \geq 
(x_{h_k}-y_{h_k}) \cdot a^*_j$ for all $j$, we get $(x-y) \cdot a^*_l =  \max_{j=1,\ldots,N^*} (x-y_{}) \cdot a^*_j$. We conclude that $u_{h_k}(x_{h_k})$ converges to 
$$
v(y) +  \max_{j=1,\ldots,N^*} (x-y_{}) \cdot a^*_j, \text{ for } y \in \partial \tir{\Omega}. 
$$
Next, if $z \in \partial \Omega$ and $z_h \to z$, $z_h \in  \partial \Omega_h$, we have $u_h(x_h) \leq u_h(z_h) + \max_{j=1,\ldots,N^*} (x_h-z_h) \cdot a^*_j$
and repeating the same argument, we obtain for all $z \in \partial \Omega$
$$
v(y) +  \max_{j=1,\ldots,N^*} (x-y_{}) \cdot a^*_j \leq v(z) +  \max_{j=1,\ldots,N^*} (x-z_{}) \cdot a^*_j.
$$
This proves \eqref{limit-ext}. As a consequence, 
by Theorem \ref{formula-00}, the limit function $v$ coincides with a function on $\R^d$ with asymptotic cone $K$, i.e.
$v$ has asymptotic cone $K$. 
We conclude by Corollary \ref{Oliker-cor} that 
$$
\chi_v (\tir{\Omega})  = Y.
$$ As a consequence
\begin{equation} \label{conserve}
\omega(R,v,  \tir{\Omega} ) = \int_{Y} R(p) dp = \omega(R,u,  \tir{\Omega} ).
\end{equation}

{\bf Part 3 } The limit function solves \eqref{m1m}.

Since $u_{h_k}$ converges uniformly to $v$ on $\tir{\Omega}$, by  \cite[Lemma 10]{awanou2019uweakcvg} $\Gamma_1(u_{h_k})$ converges uniformly on compact subsets of $\Omega$ to $v$. By \cite[Lemma 1.2.2]{Guti'errez2001}, for each compact set $K \subset U \subset \tir{U} \subset \Omega$ for an open set $U$, $\partial v(K) \subset \liminf_{h_k \to 0} \partial \Gamma_1(u_{h_k})(U)= \liminf_{h_k \to 0} \partial_V u_{h_k}(U)$ up to a set of measure 0. 
Here, we also used Lemma \ref{gamma-lem}. We recall from Lemma \ref{inc-sub-lem} that $\partial_V u_{h_k}(\Omega_{h_k}) \subset Y$. Thus $\chi_v(\Omega) \subset Y$.

Next, we recall that the set of points which are in the normal image of more than one point is contained in a set of measure 0,  \cite[Lemma 1.1.12]{Guti'errez2001}. As $\chi_v (\tir{\Omega})  = Y$ and $\chi_v(\Omega) \subset Y$, we have
$\chi_v(\partial \Omega) \subset \partial Y$ up to a set of measure 0. In other words, $|\chi_v(\partial \Omega)|=0$.
We conclude that 
\begin{multline*}
\omega(R,v, E ) = \omega(R,v, E\cap \Omega )+ \omega(R,v, E\cap \partial \Omega ) =
 \omega(R,v, E\cap \Omega ) \\ =  \omega(R,u, E\cap \Omega ) \leq \omega(R,u, E ),
\end{multline*}
for all Borel sets $E \subset \tir{\Omega}$. 
Thus, it is not possible to have $\omega(R,v, E )  < \omega(R,u, E )$ for a Borel set $E$ since that would give
\begin{multline*}
\omega(R,v,  \tir{\Omega} ) = \omega(R,v,  E  ) + \omega(R,v,  \tir{\Omega} \setminus E )
<   \omega(R,u,  E  ) + \omega(R,u,  \tir{\Omega} \setminus E ) = \omega(R,u,  \tir{\Omega} ),
\end{multline*}
contradicting \eqref{conserve}. We conclude that $\omega(R,v, E )  = \omega(R,u, E ) $ for all Borel sets $E \subset \tir{\Omega}$. 

As $u_{h_k}$ converges uniformly to $v$ on $\tir{\Omega}$ and $x^1_h \to x^1$, $u_{h_k}(x^1_{h_k}) \to v(x^1)$. Thus $v(x^1)=\alpha$. Since
\eqref{m1m} has a unique solution with $u(x^1)=\alpha$ and $v(x^1)=\alpha$, we have $u=v$ and hence $u_h$ converges uniformly on $\tir{\Omega}$ to $u$. \Qed
\end{proof}

\subsection{Convergence when $\partial_V u_h(\Omega_h)$ is not necessarily equal to $Y$} \label{cvg-visc}

In this section we consider the case $V_{min} \subset V \subset V_{max}$. For a solution of \eqref{m2d3}, we have 
$\partial_V u_h(\Omega_h) \subset Y$, but we may have $\partial_V u_h(\Omega_h) \neq Y$. Thus arguments for convex functions no longer apply. We will use arguments for convergence to viscosity solutions. 
But we will also use the Lipschitz continuity of mesh functions to extract subsequences, c.f. Theorem \ref{boom-thm}. Our convergence results are thus for $\Omega$ a rectangle. There is no loss of generality as Problem \ref{m1} has an equivalent formulation on a larger rectangular domain $\widetilde{\Omega}$ by setting $f=0$ on $\widetilde{\Omega} \setminus \Omega$. Recall that for a solution $u$ of \eqref{m1}, we have $\chi_u(\tir{\Omega}) = \chi_u(\R^d)=\tir{\Omega^*}$. The existence of solution to \eqref{m2d3} in the degenerate case $f \geq 0$ is discussed in section \ref{degenrate-case}. If $V(x)=V_{max}(x)$ for all $x \in \partial \Omega_h$, then convergence on a bounded convex domain can be proven based on Theorem \ref{old-14}. 

We denote by $|.|$ the matrix norm induced by the Euclidean norm $|.|$ on $\R^d$. Let $M$ be a symmetric positive definite $d \times d$ matrix and $p(x)=1/2 \, x^T M x$ be a strictly convex quadratic polynomial. Recall that the condition number of $M$ is given by $\sqrt{|M| \, |M^{-1}|}$. Let $\lambda$ and $\Lambda$ denote the smallest and largest eigenvalues of $M$. It is known that $|M|=\Lambda$ and thus similarly $|M^{-1}| = 1/\lambda$. So the condition number of $M$ is $\sqrt{\Lambda/\lambda}$. 

If $p(x)=1/2 \, x^T M x$ and $M$ has condition number less than $\kappa$, we say that $p$ is a quadratic polynomial with condition number less than $\kappa$. 

\begin{definition}
A convex function $u \in C(\tir{\Omega})$ is a viscosity solution of 
\begin{equation} \label{monge}
R(D u(x)) \det D^2 u(x) =f(x),
\end{equation}
 in $\Omega$ if
for all $\phi \in C^2(\Omega)$ the following holds
\begin{itemize}
\item[-] at each local maximum point $x_0$ of $u-\phi$, $f(x_0) \leq R(D \phi (x_0)) \det D^2 \phi(x_0)$
\item[-] at each local minimum point $x_0$ of $u-\phi$, $f(x_0) \geq R(D \phi (x_0)) \det D^2 \phi(x_0)$, if $D^2 \phi(x_0) \geq 0$, i.e. $D^2 \phi(x_0)$ has positive eigenvalues.
\end{itemize}
\end{definition}

As explained in \cite{Ishii1990}, the requirement $D^2 \phi(x_0) \geq 0$ in the second condition above is natural for the two dimensional case. 
The space of test functions in the definition above can be restricted to the space of strictly convex quadratic polynomials \cite[Remark 1.3.3]{Guti'errez2001}. We will refer to the conditions above as the conditions in the definition of viscosity solution for the test function $\phi$.


\begin{definition}
A convex function $u \in C(\tir{\Omega})$ is a $\kappa$-viscosity solution of \eqref{monge} if the conditions in the definition of viscosity solution hold for all strictly convex quadratic polynomials with condition number less than $\kappa$. 
\end{definition}
A viscosity solution of \eqref{monge} is a $\kappa$-viscosity solution for all $\kappa >0$. 

\subsubsection{ Equivalence with Aleksandrov solutions} \label{equi-A-visc}

We recall that an Aleksandrov solution of \eqref{monge} is a convex function $u \in C(\tir{\Omega})$ such that $\omega(R,u,E) = \int_E f(x) dx$ for all Borel sets $E \subset \Omega$.

For $f>0$ and $f \in C(\tir{\Omega})$, one proves as with \cite[ Propositions 1.3.4 and 1.7.1]{Guti'errez2001} that a convex function $u \in  C(\tir{\Omega})$  is an Aleksandrov solution of \eqref{monge} if and only if it is a viscosity solution of \eqref{monge}. 

\subsubsection{ Convergence to the viscosity solution}

The scheme \eqref{m2d3} is said to be monotone if for $z_h$ and $w_h$ in  $\mathcal{C}_h$, $z_h(y) \geq w_h(y), y \neq x$ with $z_h(x) = w_h(x)$, we have $\omega(R,z_h, \{ \, x\, \})  \geq \omega(R,w_h, \{ \, x\, \}) $. One proves as with \cite[Lemma 3.7]{DiscreteAlex2} that the scheme \eqref{m2d3} is monotone. 

We say that the scheme \eqref{m2d3} is consistent if for all $C^2$ convex functions $\phi$, a sequence $x_h \to x \in \Omega$ 
$$
\lim_{h \to 0} \frac{1}{h^d} \omega(R, \phi , \{ \, x_h\, \})  = \det D^2 \phi(x).
$$ 
We will also use the terminology of consistent with a class of smooth functions. 

Analogous to \cite[Theorem 3.9]{DiscreteAlex2} and similarly to the end of Part 3 of the proof of Theorem \ref{cvg-thm}, we have
\begin{theorem}
Assume that $V=V_{max}$ and the scheme \eqref{m2d3} is consistent. If the solution $u_h$ of \eqref{m2d3}, with $u_h(x^1_h)=\alpha$ for 
$x^1_h$ in $\Omega_h$ 
and $x^1_h \to x^1 \in \tir{\Omega}$, converges uniformly on $\tir{\Omega}$ to a convex function $v$, then $v$ is a viscosity solution of \eqref{monge} with $v(x^1)=\alpha$.
\end{theorem}

Recall the definition of the stencil $V_{\kappa}$ from section \ref{disc-R-curvature}, i.e. $V_{\kappa}$ consists of all vectors $e \in \mathbb{Z}^d \setminus \{ \, 0 \, \}$ with co-prime coordinates such that $|e| \leq 1/2 \sqrt{d} \kappa$.  Analogous to the above theorem we have

\begin{theorem} \label{final-thm-01}
Assume that $V=V_{\kappa} \cap V_{max}$ and the scheme \eqref{m2d3} is consistent for strictly convex quadratic polynomials with condition number less than $\kappa$. If the solution $u_{h,\kappa}$ of \eqref{m2d3}, with $u_{h,\kappa}(x^1_h)=\alpha$ for 
$x^1_h$ in $\Omega_h$ 
and $x^1_h \to x^1 \in \tir{\Omega}$, converges uniformly on $\tir{\Omega}$ to a convex function $v_{\kappa}$, then $v_{\kappa}$ is a $\kappa$-viscosity solution of \eqref{monge} with $v_{\kappa}(x^1)=\alpha$.

\end{theorem}

We establish below the consistency of \eqref{m2d3} for  $V=V_{\kappa}  \cap V_{max}$, for strictly convex quadratic polynomials, at interior points at a distance $C h$ of $\partial \Omega$. To check the conditions in the definition of viscosity solution at a point $x \in \Omega$, one first take $h$ sufficiently small and check the conditions at mesh points $x_h$ close to $x$. See 
the proof of Theorem \ref{final-thm-01} in section \ref{proof-of-final-thm-01} below. 

\begin{theorem} \label{final-thm-02} Let $\Omega$ be a rectangle. 
Assume that $u_{h,\kappa}$ is discrete convex and solves \eqref{m2d3} for $V=V_{\kappa}  \cap V_{max}$  with $u_{h,\kappa}(x^1_h)=\alpha$ for 
$x^1_h$ in $\Omega_h$ 
and $x^1_h \to x^1 \in \tir{\Omega}$. There is a subsequence $h_k$ such that  $u_{h_k,\kappa}$ converges uniformly on $\tir{\Omega}$ to a continuous convex function $v_{\kappa}$ with $v_{\kappa}(x^1)=\alpha$.
\end{theorem}

\begin{proof}
By Theorem \ref{boom-thm}, there is a subsequence $h_k$ such that $u_{h_k,\kappa}$ converges uniformly on $\tir{\Omega}$ to a continuous function $v_{\kappa}$. The latter is convex by Lemma \ref{new-03}. \Qed
\end{proof}


As the family $v_{\kappa}$ consists of convex functions with uniformly bounded gradient, we can extract a subsequence which converges uniformly on $\tir{\Omega}$ to a convex function $v$ as $\kappa \to + \infty$. 

\begin{theorem} \label{final-thm-03}
Let $\kappa=n$ and assume that
 $v_n$ is a $\kappa$-viscosity solution of \eqref{monge} which converges uniformly on $\tir{\Omega}$ to a convex function $v$ as $n \to +\infty$. Then $v$ is a viscosity solution of \eqref{monge}.
\end{theorem}

\begin{proof} The proof is the same as the proof of stability of viscosity solutions under uniform convergence. Let $\phi$ be a strictly convex quadratic polynomial. We may assume that $\phi(x) = 1/2 \, x^T M x$ for a symmetric positive definite matrix $M$, since for a linear function $L(x)$, $\det D^2 L(x)=0$. Assume that $M$ has condition number $n_0$.

Let $x_0 \in \Omega$ and assume that $v-\phi$ has a maximum in the closed ball $B(x_0,\delta)$. Using $\phi(x) + |x-x_0|^2$, we may assume that $v-\phi$ has a strict local maximum in $B(x_0,\delta)$. By \cite[Lemma 2.4]{Bardi97}, since $v_n-\phi$ converges uniformly on $\tir{\Omega}$ to $v-\phi$, there exists a sequence $x_n \in \Omega$ such that $x_n \to x_0$ and $v_n(x_n) -\phi(x_n)\geq v_n(x) -\phi(x)$ for all $x$ in $B(x_0,\delta)$. 

We get $R(D \phi(x_n)) \det D^2 \phi (x_n) \geq f(x_n)$  for $n \geq n_0$ and thus 
$R(D \phi(x_0))$  $\det D^2 \phi (x_0) \geq f(x_0)$.

The other condition in the definition of viscosity solution is proved similarly. \Qed
\end{proof}

We now summarize Theorems \ref{final-thm-01}--\ref{final-thm-03}.
\begin{theorem} Let $\Omega$ be a rectangle. 
Assume that $V=V_{\kappa} \cap V_{max}$ and the scheme \eqref{m2d3} is consistent for strictly convex quadratic polynomials with condition number less than $\kappa$. There is a subsequence $h_k$ such that the solution $u_{h_k,\kappa}$ of \eqref{m2d3}, with $u_{h_k}(x^1_{h_k})=\alpha$ for 
$x^1_{h_k}$ in $\Omega_{h_k}$ 
and $x^1_{h_k} \to x^1 \in \tir{\Omega}$, converges uniformly on $\tir{\Omega}$ to a convex function $v_{\kappa}$. Moreover, as $\kappa \to + \infty$,  $v_{\kappa}$ converges uniformly on $\tir{\Omega}$ to the unique convex solution $u$ of \eqref{m1m} with $u(x^1)=\alpha$.
\end{theorem}

\begin{proof}
By Theorem \ref{final-thm-02}, there is a subsequence $h_k$ such that  $u_{h_k,\kappa}$ converges uniformly on $\tir{\Omega}$ to a continuous convex function $v_{\kappa}$ with $v_{\kappa}(x^1)=\alpha$. By Theorem \ref{final-thm-01}, $v_{\kappa}$ is a $\kappa$-viscosity solution of \eqref{monge} with $v_{\kappa}(x^1)=\alpha$. By Theorem \ref{final-thm-03}, as $\kappa \to + \infty$, $v_{\kappa}$ converges uniformly on $\tir{\Omega}$ to a convex function $v$ which is a viscosity solution of \eqref{monge} with $v(x^1)=\alpha$. Arguing as in Part 2 of the proof of Theorem \ref{cvg-thm}, the convex function $v$ has asymptotic cone $K$. Recall the equivalence of viscosity and Aleksandrov solutions from section \ref{equi-A-visc}. The convex function $v$ is then equal to the unique solution of \eqref{m1m}. All subsequences thus converge to the latter. This completes the proof. \Qed
\end{proof}

\begin{remark}
The requirement for convergence that solutions of \eqref{m2d3} are discrete convex can be removed when $f>0$ on $\Omega$ by using the viscosity solution reformulation of convexity.
\end{remark}

We finish this section by addressing consistency for quadratic polynomials. We first review a topic which is curiously called geometry of numbers.

\subsubsection{Geometry of numbers}
The material for this section is adapted from \cite{coppel2009number} to which the reader is referred to for additional details. 

Recall that $\{ \, r_1,\ldots,r_d \, \}$ denotes the canonical basis of $\R^d$. Here $r_i \in \mathbb{Z}^d$ for all $i$. We view $ \mathbb{Z}^d$ as a lattice, i.e.
$$
 \mathbb{Z}^d = \{ \, \zeta_1 r_1 + \ldots + \zeta_d r_d, \zeta_i \in \mathbb{Z} \text{ for all } i
 \, \}.
$$
The determinant $d(\mathbb{Z}^d)$ of the matrix with column vectors $r_i, i=1,\ldots,d$ is independent of the choice of the basis and called determinant of the lattice. We have $d(\mathbb{Z}^d)=1$.

Let $M$ be a symmetric positive definite matrix and consider the distance on $\R^d$ given by $d_M(e,e')=||e-e'||_M$ where 
$|| v ||_M :=\sqrt{v^T M v} $. For $e_0 \in  \mathbb{Z}^d$, we define the Voronoi cell
$$
\Vor (e_0) = \{ \, p \in \R^d, ||p-e_0||_M \leq ||p-e||_M, \forall e \in \Z^d 
\, \}.
$$
We denote by $\inte \Vor(e_0)$ the interior of $\Vor(e_0)$. 
It can be shown \cite[p. 343]{coppel2009number} that (recall that $\Z^d$ is infinite)
\begin{align} \label{voro}
\begin{split}
\R^d & = \cup_{e \in \Z^d} \Vor(e) \\
\inte \Vor(e) \cap \inte \Vor(e') & = \emptyset, \text{ for } e, e' \in \Z^d, e \neq e'.
\end{split}
\end{align}
We also note that for $e_0 \in \Z^d$
$$
\Vor(e_0) = \Vor(0) + e_0,
$$
i.e. $\Vor(e_0)$ is a $\Z^d$-translate of $\Vor(0)$. In the terminology of \cite[p. 337]{coppel2009number}, \eqref{voro} says that the $\Z^d$-translates of $\Vor(0)$ form a tiling of $\R^d$. By \cite[Proposition 11 Chapter VIII]{coppel2009number},
\begin{equation} \label{co-volume}
|\Vor (0)| = d(\Z^d)=1.
\end{equation}
We consider the open half-space
$$
G_e = \{ \, p \in \R^d ||p||_M < ||p-e||_M
\, \},
$$
and the hyperplane 
$$
H_e = \{ \, p \in \R^d ||p||_M = ||p-e||_M
\, \}.
$$
We have $\tir{G_e} = G_e \cup H_e$ and \cite[p. 342--343]{coppel2009number}
$$
\Vor(0) = \cap_{e \in \Z^d \setminus \{ \, 0 \, \}} \tir{G_e}. 
$$
In fact, there are a finite number of points $e_i \in \Z^d, i=1,\ldots, l$ such that
$$
\Vor(0) = \cap_{i=1}^l \tir{G_{e_i}},
$$
with the above representation irredundant, in the sense that it no longer holds if one omits one of the half-spaces $G_{e_i}$. 

Note that $\Vor(0)$ is convex, and recall that a subset $A$ of $\Vor(0)$ is a face of $\Vor(0)$ if $A$ is convex and if $y, y' \in \Vor(0)$ and the open line segment $(y,y')$ intersects $\Vor(0)$, then $y, y' \in \Vor(0)$. The $(d-1)$-dimensional faces of $\Vor(0)$ are called facets. The distinct facets of $\Vor(0)$ are given by the intersections $\Vor(0) \cap H_{e_i}, i=1, \ldots, l$
and the vectors $e_i, i=1, \ldots, l$ are the facets vectors of the lattice $\Z^d$. 

The notions introduced above are dependent on the distance $d_M$ induced by the symmetric positive definite matrix $M$. In \cite{Mirebeau15,neilan2019monge}, the facets of the Voronoi cell are called Voronoi facets and the facets vectors are called strict $M$-Voronoi vectors. $M$-Voronoi vectors are the vectors $e \in \Z^d$ for which $\Vor(0) \cap H_e \neq \emptyset$. 
Equivalently
$$
 \Vor (0) = \{ \, p \in \R^d, 2 (M p) \cdot e \leq e^T M e, \forall e \in \Z^d \, \}. 
$$ 

\subsubsection{Interior consistency for strictly convex quadratic polynomials} \label{consistency}

For a set $S$, $h S =\{ \, h \, x, x \in S\, \}$ and $M S = \{ \, M x, x \in S\, \}$. We note that the definition of $\partial_V q (x)$ uses the values of the quadratic function $q$ only when $x \in \Omega_h$. For $x \notin \Omega_h$, the discrete extension formula \eqref{extension} is used. 

In this section, we take $V=V_{\kappa} \cap V_{max}$. 
The results of this section are needed at mesh points at a distance $C h$ of $\partial \Omega$, c.f. 
the proof of Theorem \ref{final-thm-01} 
below. For those mesh points $V_{\kappa} \cap V_{max}=V_{\kappa}$. We therefore assume that the stencil $V$ is mesh independent in the statement of the results below. 

\begin{lemma} \label{pre-interior}
Let $M$ be a symmetric positive definite $d \times d$ matrix and $q(x) = 1/2 \, x^T M x$ a quadratic polynomial. We have for all $x \in \Omega_h$ such that $x+ h e \in \Omega_h$ for all $e \in V$
$$
|\partial_V q (x) | = h^d \det(M) |  \Vor (M,V)|, 
$$
where $\Vor (M,V)$ is the Voronoi cell of $M$ associated with the stencil $V$, i.e.
$$
 \Vor (M,V) = \{ \, p \in \R^d, 2 (M p) \cdot e \leq e^T M e, \forall e \in V
 \, \}. 
$$ 
\end{lemma}

\begin{proof} 
We have
$$
q(x + h e) = \frac{1}{2} (x + h e)^T M (x + h e) = q(x)+ h \, x^T M x + \frac{h^2}{2} e^T M e.
$$
Thus $\partial_V q (x)$ is equal to
\begin{multline*}
\{ \, p \in \R^d, p \cdot e \leq x^T M e +  \frac{h}{2} e^T M e, \forall e \in V
 \, \} 
  =  \{ \, p \in \R^d, (p-M x) \cdot e \leq    \frac{h}{2} e^T M e, \\ \forall e \in V
  \, \} 
  =  \{ \, h q \in \R^d, 2 (q- \frac{1}{h}M x) \cdot e \leq    e^T M e, \forall e \in V
  \, \}  
  = h \{ \, q \in \R^d, \\2 (q- \frac{1}{h}M x) \cdot e \leq    e^T M e, \forall e \in V
  \, \} 
  = h M \{ \, r \in \R^d, 2 (M r- \frac{1}{h}M x) \cdot e \leq    e^T M e, \forall e \in V
   \}.     
\end{multline*}
But the set $\{ \, r \in \R^d, 2 (M r- 1/h \, M x) \cdot e \leq    e^T M e, \forall e \in V  \, \}$ is a translate of $ \Vor (M,V)$ by $1/h \, M x$, and thus they have the same volume. The result then follows. \Qed
\end{proof}
We next give sufficient conditions on $V$ so that $| \Vor (M,V)| = 1$
so that consistency holds for strictly convex quadratic polynomials. 

\begin{lemma} \label{meas-q}
Let $M$ be a symmetric positive definite $d \times d$ matrix. 
If the stencil $V$ contains all strict $M$-Voronoi vectors, then $| \Vor (M,V)| = 1$. Therefore, for $q(x) = 1/2 \, x^T M x$ and 
$x \in \Omega_h$ such that $x+ h e \in  \Omega_h$ for all $e \in V$ we have
$$
|\partial_V q (x) | = h^d \det M.
$$
\end{lemma}

\begin{proof}
We show that under the conditions of the lemma we have $ \Vor (M,V) = \Vor(0)$. The result then follows from Lemma \ref{pre-interior} and \eqref{co-volume}.

We have from the definitions $\Vor(0) \subset  \Vor (M,V)$. Let $S$ be the set of strict $M$-Voronoi vectors. We have
$$
\Vor (0) = \{ \, p \in \R^d, 2 (M p) \cdot e \leq e^T M e, \forall e \in S \, \}. 
$$
If $S \subset V$, we get $\Vor (M,V) \subset \Vor (0)$. The result then follows. \Qed
\end{proof}

The following characterization of the set of all strict $M$-Voronoi vectors was given in \cite{Mirebeau15,neilan2019monge}.
\begin{lemma} \label{condition-number}
Let $M$ be a symmetric positive definite $d \times d$ matrix 
and let $\kappa = \sqrt{|M| \, |M^{-1}|}$. 
Then all strict $M$-Voronoi vectors are contained in the set
\begin{equation} \label{S}
S=\{ \, e \in \Z^d, |e| \leq \frac{1}{2} \sqrt{d} \kappa, e \text{ has co-prime coordinates}
\, \}.
\end{equation}
\end{lemma}


\subsubsection{Proof of Theorem \ref{final-thm-01} } \label{proof-of-final-thm-01}
Recall the half-relaxed limits defined for $x \in \tir{\Omega}$ by
\begin{align*}
v^*(x) = \limsup_{y \to x, h \to 0} u_{h,\kappa}(y) & = \lim_{\delta \to 0} \sup\{ \, u_{h,\kappa}(y), y \in \Omega_h, |y-x| \leq \delta, 0<h\leq \delta \, \} \\
 v_*(x) = \liminf_{y \to x, h \to 0} u_{h,\kappa}(y) & = \lim_{\delta \to 0} \inf \{ \, u_{h,\kappa}(y), y \in \Omega_h, |y-x| \leq \delta, 0<h\leq \delta \, \}.
\end{align*}
By construction $v_{\kappa}$ is the uniform limit of continuous functions which interpolate $u_{h,\kappa}$ and hence $v_{\kappa} \in C(\tir{\Omega})$. Since $u_{h,\kappa}$
converges uniformly on $\tir{\Omega}$ to $v_{\kappa}$, we have $v_{\kappa}=u^*  = u_*$ on $\tir{\Omega}$. At this point, it is not known yet that the limit convex function $v_{\kappa}$ is a viscosity solution of \eqref{monge}.

We show that $v_{\kappa}=u_*$ is a $\kappa$-viscosity super solution of $R(D u(x))\det D^2 u(x)=f(x)$ at every  point $x$ of $\Omega$. Let $x_0 \in \Omega$ and $\phi$ be a strictly convex quadratic polynomial with condition number less than $\kappa$
such that $v_*-\phi$ has a local minimum at $x_0$ with $(v_*-\phi)(x_0)=0$. Without loss of generality, we may assume that $x_0$ is a strict local minimum. 

Let $B_0$ denote a closed ball contained in $\Omega$ and containing $x_0$ in its interior.  We let $x_{h_l}$ be a subsequence in $B_0$ such that $x_{h_l} \to x_0$ with $u_{h_l}(x_{h_l}) \to v_*(x_0)$. As $h_l \to 0$, we may assume that for all $x \in B_0$,
$d(x,\partial \Omega) >  h_l \sqrt{d} \kappa$.  
 If $e \in V_{\kappa}$, $|e| \leq 1/2 \sqrt{d} \kappa$ by definition and thus $|h_l e| <  h_l \sqrt{d} \kappa$. We conclude that for $x \in B_0$, we have $x+h e \in \Omega$ and hence $x+h e \in \Omega_h$ for all $e \in V_{\kappa}$. Therefore $V_{\kappa} \cap V_{max}(x) = V_{\kappa}$ for all $x \in B_0$. 
 
 Let $x'_l \in B_0 \cap \Omega_{h_l}$ be defined by
$$
c_l  \coloneqq  (u_{h_l} - \phi)(x'_l) = \min_{B_0} u_{h_l} - \phi.
$$
Since the sequence $x'_l$ is bounded, it converges to some $x_1$ after possibly passing to a subsequence. Since $(u_{h_l}-\phi)(x'_l) \leq (u_{h_l}-\phi)(x_{h_l})$ we have
$$
(v_*-\phi)(x_0) = \lim_{l \to \infty} (u_{h_l}-\phi)(x_{h_l}) \geq \liminf_{l \to \infty} (u_{h_l}-\phi)(x'_l) \geq (v_*-\phi)(x_1).
$$
Since $x_0$ is a strict minimizer of the difference $v_*-\phi$, we conclude that $x_0=x_1$ and $c_l \to 0$ as $l \to \infty$. By definition
$$
u_{h_l}(x) \geq \phi(x) + c_l, \forall x \in B_0 \cap \Omega_{h_l},
$$
with equality at $x=x'_l$, and thus, by the monotonicity of the scheme
\begin{multline*}
0= \frac{1}{h_l^d} \omega(R, u_{h_l} , \{ \, x'_l\, \}) 
 - f(x'_l) 
\geq \frac{1}{h_l^d} \omega(R, \phi+c_l  , \{ \, x'_l\, \}) 
 - f(x'_l) \\
 = \frac{1}{h_l^d} \omega(R, \phi  , \{ \, x'_l\, \})  - f(x'_l),
\end{multline*}
which gives by the consistency of the scheme $R(D \phi(x_0))  \det D^2 \phi(x_0) - f(x_0) \leq 0$.

Similarly one shows that if $\phi $ is a strictly convex quadratic polynomial with condition number less than $\kappa$ such that $v^*-\phi$ has a local maximum at $x_0$ with $(v^*-\phi)(x_0)=0$, we have $R(D \phi(x_0)) \det D^2 \phi(x_0) - f(x_0) \geq 0$. It follows that $v_{\kappa}=u^*  = u_*$ on $\Omega$ is a $\kappa$-viscosity solution of $R(D u) \det D^2 u=f$. 
 

\subsection{Polygonal approximations of $\Omega^*$ } \label{poly-sec}

We now address the convergence of solutions of \eqref{m2} to the solution of \eqref{m1m} as $Y \to \tir{\Omega^*}$. 
Recall that $\tilde{f}$ as defined by \eqref{modified-f} depends on $Y$. Here we make the dependence explicit. 
Put
$
 f_{Y}(t) =\tilde{f}(t).
$ 

The distance of the point $x$ to the set $K$ is denoted $d(x,K)$. The Hausdorff distance $d(K,H)$ between two nonempty subsets $K$ and $H$ of $\R^d$ is defined as
$$
\max \{ \, \sup [d(x,K), x \in H], \sup [d(x,H), x \in K]\, \}.
$$

We say that a sequence of domains $\Omega_m$ is 
increasing to $\Omega$, if $\Omega_m \subset \Omega_{m+1} \subset  \Omega$ and $d(\partial \Omega_m, \partial \Omega) \to 0$ as $m \to \infty$. 

\begin{theorem} \label{proc2} Let $Y_m$ be bounded non degenerate convex polygonal domains 
increasing to $\tir{\Omega^*}$. 
Then the convex solution $u_m$ of
\begin{align} \label{m1m*}
\begin{split}
\omega(R,u,E) &= \int_E f_{Y_m}(x) d x \text{ for all Borel sets } E \subset \tir{\Omega}\\
\chi_u (\tir{\Omega}) & = Y_m \\
u(x^0) & = \alpha,
\end{split}
\end{align}
for $x^0 \in \Omega$ and $\alpha \in \R$ converges uniformly on 
$\tir{\Omega}$ to the solution $u$ of \eqref{m1m} with $u(x^0)  = \alpha$. 

\end{theorem}

\begin{proof}
Recall that $f_{Y_m}(x) = f(x) - \epsilon_m^* f(x)$ where 
$
\epsilon_m^* =\int_{\Omega^*\setminus Y_m} R(p) d p \bigg/ \int_{\Omega} f(x) d x
$. As $Y_m \to \tir{\Omega^*}$, $\epsilon_m^* \to 0$. Thus $\int_E f_{Y_m}(x) d x 
\to \int_E f(x) d x$ for all Borel sets $E \subset \tir{\Omega}$ with $|\partial E|=0$. 
For the purpose of using results on Monge-Amp\`ere equations stated for bounded domains in \cite{Guti'errez2001}, 
we may assume that the Borel sets $E \subset \tir{\Omega}$ are contained in a larger bounded domain $\hat{\Omega}$ such that $ \tir{\Omega} \subset U \subset \hat{\Omega}$ for an open set $U$, and set
$f_{Y_m}(x)=0$ and $f(x)=0$ outside $\Omega$. 

Recall that $\Omega^*$ is bounded. Let $C$ such that $|p| \leq C, \forall p \in \Omega^*$. We claim that the functions $u_m$ are Lipschitz continuous with the same Lipschitz constant. The proof is analogous to the one for \cite[Lemma 1.1.6]{Guti'errez2001}. Essentially because $\chi_{u_m} (\tir{\Omega}) \subset \tir{\Omega^*}$ for all $m$. Thus for all $x, y \in \tir{\Omega}$, we have for a constant $C$ independent of $m$
$$
|u_m(x) - u_m(y)| \leq C ||x-y||_1.
$$
Moreover since $u_m(x^0)=\alpha$ and $\hat{\Omega}$ is bounded, we conclude that the sequence $u_m$ is uniformly bounded and equicontinuous on $\tir{\Omega}$. By the Arzela-Ascoli theorem, there is a subsequence also denoted $u_{m}$ which converges uniformly on the compact set $\tir{\Omega}$ to a function $v$ on $\tir{\Omega}$. It is known that such a function $v$ is convex. By the weak convergence of R-curvatures \cite[Theorem 9.1]{Bakelman1994}, $\omega(R,u_m,.)$ weakly converges to $\omega(R,v,.)$ 
We conclude that $\omega(R,v,E) = \int_E f(x) d x$ for all Borel sets $E\subset \Omega$.

Next we show that $\chi_v (\tir{\Omega})  = \tir{\Omega^*}$. Let $p \in  \Omega^*$. There exists a sequence $p_m \in  Y_m$ such that $p_m \to p$ in $\R^d$, see for example \cite[Theorem 1.8.8-a]{Schneider14}. Therefore there exists $x^m \in \tir{\Omega}$ such that $p_m \in \chi_{u_{m}} (x^m)$, i.e.
$$
u_{m}(y) \geq u_{m}(x^m) + p_m \cdot (y-x^m) \, \forall y \in \R^d.
$$
The bounded sequence $x^m$ converges up to a subsequence to a point $x \in \tir{\Omega}$. We conclude that
$v(y) \geq v(x) + p \cdot(y-x)$ for all $y \in \tilde{\Omega}$. Thus $p \in \chi_v (\tir{\Omega})$ and $ \Omega^* \subset
\chi_v (\tir{\Omega})$. A similar argument shows that $\chi_v (\tir{\Omega})$ is closed. Therefore
$\tir{\Omega^*} \subset \chi_v (\tir{\Omega})$. Using \eqref{necessary}
\begin{multline*}
\int_{\chi_{v} (\tir{\Omega}) } R(p) d p = \omega(R,v,\tir{\Omega}) = \int_{\tir{\Omega}} f(x) d x
= \int_{\Omega} f(x) d x= \int_{\Omega^*} R(p) d p \\
=\int_{\tir{\Omega^*}} R(p) d p.
\end{multline*}
Therefore $|\chi_{v} (\tir{\Omega}) \setminus \tir{\Omega^*}|=0$. We conclude that $\tir{\Omega^*}$ is dense in $\chi_{v} (\tir{\Omega})$. But 
$\tir{\Omega^*}$ is closed. Thus $\chi_{v} (\tir{\Omega}) = \tir{\Omega^*}$.

Moreover, if $K$ is compact and $U$ is open such that $K \subset U \subset \tir{U} \subset \Omega$, we have up to a set of measure 0, $ \chi_v(K) \subset \liminf_{m \to \infty} \chi_{u_{m}}(U)$, by \cite[Lemma 1.2.2]{Guti'errez2001}. This implies $\chi_v(\Omega) \subset \Omega^*$. As in the proof of Part 3 of Theorem \ref{cvg-thm}, using \cite[Lemma 1.1.12]{Guti'errez2001} which says that the set of points which are in the normal image of more than one point is contained in a set of measure 0, we obtain $|\chi_v(\partial \Omega)| = 0$. So we actually have $\omega(R,v,E) = \int_E f(x) d x$ for all Borel sets $E\subset \tir{\Omega}$.

Clearly $v(x^0) = \alpha$ and so $v$ is the unique solution of \eqref{m1m} which satisfies $v(x^0) = \alpha$. It follows that the whole sequence $u_m$ converges uniformly to $u$ on $\tir{\Omega}$. \Qed
\end{proof}

\section{The degenerate case $f\geq 0$} \label{degenrate-case}

For the uniqueness of a solution, we needed the assumption $f>0$. 
In the case $f\geq 0$, from an implementation point of view, 
and for the existence of a solution, 
we may consider the approximate problem analogous to \eqref{m2d3}
\begin{align} \label{m2d4}
\begin{split}
\omega_a(R,u_h^{\epsilon},\{\, x \,\})&= \int_{E_x} \tilde{f}(t) dt + \epsilon |E_x| , x \in \Omega_h,
\end{split}
\end{align}
where $\epsilon>0$ is taken close to machine precision and for a polygon $Y$ we choose $Y_{\epsilon}$ such that $Y \subset Y_{\epsilon}$ and the compatibility condition 
\begin{align*}
\sum_{x \in \Omega_h} \omega_a(R,u_h^{\epsilon},\{\, x \,\}) = \int_{Y_{\epsilon}} R(p) d p. 
\end{align*} 
holds. Here $u_h^{\epsilon}$ is required to have asymptotic cone $K_{\epsilon}$ associated with $Y_{\epsilon}$. As $\epsilon \to 0$ $u_h^{\epsilon}$ converges to a solution $u_h$ of \eqref{m2d3} and $Y_{\epsilon} \to Y$. This proves existence of a solution to \eqref{m2d3} in the degenerate case $f \geq 0$. 

For the convergence of the discretization in the case $V=V_{max}$, i.e. the analogue of Theorem \ref{cvg-thm}, note that because of Lemmas \ref{pre-lip-lem} and \ref{inc-sub-lem}, the approximations are uniformly Lipschitz on $\tir{\Omega}$. It then remains to verify that $\chi_{\Gamma_1(u_h^{\epsilon})}( \mathcal{N}_h^1 )$ is uniformly bounded. But this is also an immediate consequence of Lemma \ref{key-lem}.

We have for all $p \in \chi_{\Gamma_1(u_h^{\epsilon})}( \mathcal{N}_h^1 )$
$$
||p|| \leq C C_{Y_{\epsilon}} C_{\Omega}.
$$
Since $Y_{\epsilon} \to Y$ as $\epsilon \to 0$, the result follows. For a subsequence $h_k$, $u_{h_k}^{\epsilon}$ converges uniformly on $\tir{\Omega}$ to a convex function $v^{\epsilon}$. The latter can be shown to converge to a solution of \eqref{m1m} using the arguments of section \ref{poly-sec}. 

We note that the convergence argument to a viscosity solution of section \ref{cvg-visc} do not require $f>0$. 



\section{Numerical experiments} \label{Numerical}

For the implementation of the numerical method \eqref{m2d3}, note that the set $\partial_V v_h(x)$, for a mesh point $x$, is a polygon defined by a finite number of inequalities. There are programs available on MATLAB Central which allow to compute the vertices of a polygon from the defining inequalities. In our MATLAB implementation, we found the vertices of $\partial_V v_h(x)$ by 
parameterizing its edges using the linear inequalities.  Numerical integration over a triangulation of the polygon can then be used to compute $\omega_V(R,v_h,\{\,x\,\})$ for $x \in \Omega_h$. Formulas for the Jacobian matrix are given in  \cite{Awanou-damped}. To deal with a possible singular Jacobian, as in \cite{Benamou2014}, we added a small constant to the diagonal elements. 
The parameters $\delta$ and $\rho$ in the damped Newton's method \cite{Awanou-damped} were taken as $\rho=1$ and $\delta=1/2$.

We give numerical experiments for $d=2$ and $\Omega=(0,1)^2$. Here $\Omega_h=\Omega \cap (a+\mathbb{Z}_h^2)$ where $a=(1/2, 1/2)$. 
For integration over edges, for the entries of the Jacobian matrix, we used a Gaussian quadrature rule with degree of precision 7. For the right hand side, a three point quadrature rule with degree of precision 2 was used. The stencil $V$ was taken as $V=-V_1 \cup V_1$ where $V_1$ consists of the vectors 
$(1,0), (0,1), (1,1), (1,-1), (2,1)$, $(-1,2), (1,2)$ and $(-2,1)$. For the imposition of the constraint $v_h(x^1)=0$, we approximate the solution of the equation
$R(D u) \det D^2 u=f+ u(x^1)$. The compatibility condition \eqref{necessary} implies that $u(x^1)=0$. In our experiment we used $x^1=a+(h,h)$. 

The discrete convexity assumption was not enforced.  Starting with an initial guess which is discrete convex, we require that subsequent iterates are $V$-discrete convex by choosing the step size in the damped Newton's method. 

Note however that since we are using in \eqref{m2d3}  the approximation $\int_{E_x} f(t) dt \approx h^2 f(x)$ and numerical integration for the evaluation of $\omega_V(R,u_h,\{\,x\,\})$ for $x \in \Omega_h$, the discrete mass conservation \eqref{mass-conservation} will not hold, i.e. $\sum_{x \in \Omega_h}  \omega_V(R,u_h,\{\,x\,\}) 
\neq \sum_{x \in \Omega_h} h^2 f(x)$. A discrete solution with some value of $u_h(x^1)$ is computed 
and we add a constant $c$ to have $u_h(x^1)+c=0$. 
Alternatively, to assure a discrete mass conservation, one could also consider, for a constant $c$ to be adjusted, 
$ \omega_V(R,u_h,\{\,x\,\})  = h^2 f(x)+ c \sum_{x \in \Omega_h} u_h (x)$. This approach naturally requires adding a small constant to the diagonal elements of the Jacobian matrix.

First we consider the exact solution $u(x,y)=x^2/2+ xy + y^2$. In this case $\tir{\Omega^*}$ is the polygon of area 1 with vertices $(0,0), (1,1), (1,2)$ and $(2,3)$. We take $R(x,y)=x+y$ with corresponding right hand side $f(x,y)$. 
As in \cite{Prins2015} we take as initial guess a function $u^0$ such that $\chi_{u^0}(\tir{\Omega})$ is a rectangle contained in $\Omega^*$. 

Table 1 
shows an asymptotic quadratic convergence rate for $u$ while the convergence rate for $Du$ is linear. 
Figures 3 and 4 
show the deformations of a grid by the gradient mapping. 
Here, the initial guess was taken as $\alpha u^0$ where $u^0$ is a function  such that $\chi_{u^0}(\tir{\Omega})$ is a rectangle contained in $\Omega^*$ and $\alpha=\int_{\Omega^*} R(p) dp$.
For this case, unlike the results in \cite{benamou2017minimal}, there is no collapse of grid points near the boundary of the circle.

\begin{table}
\begin{tabular}{c|ccccc}  
 \multicolumn{6}{c}{$h$}\\
 &  $1/2^5$ &  $1/2^6$ & $1/2^7$ & $1/2^8$& $1/2^9$ \\
Error for $u$   & 2.72 $10^{-4}$ & 8.01 $10^{-5}$ & 2.31 $10^{-5}$& 6.52 $10^{-6}$& 1.82 $10^{-6}$ \\& & & & &  \\
Rate  & &1.76 & 1.79 & 1.82 & 1.84 \\
& & & & &  \\
Error for $D u$  & 6.27 $10^{-3}$ & 3.30 $10^{-3}$ & 1.56 $10^{-3}$& 8.23 $10^{-4}$& 3.92 $10^{-4}$ \\& & & & &  \\
Rate  & &0.93 & 1.07 & 0.93 & 1.07 \\
\end{tabular}  \label{tab} 
\caption{Maximum errors for a smooth solution.}
\end{table}


\begin{figure}[tbp]
\begin{center}
\includegraphics[angle=0, height=4.5cm]{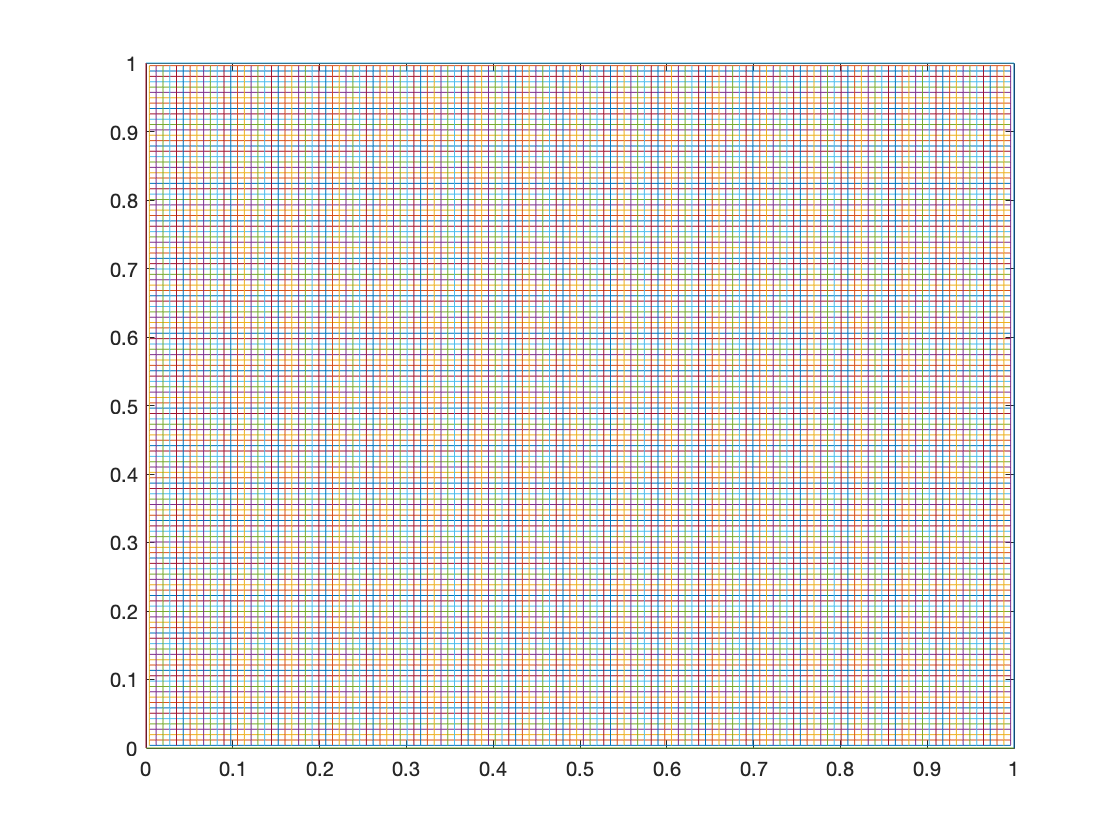}
\includegraphics[angle=0, height=4.5cm]{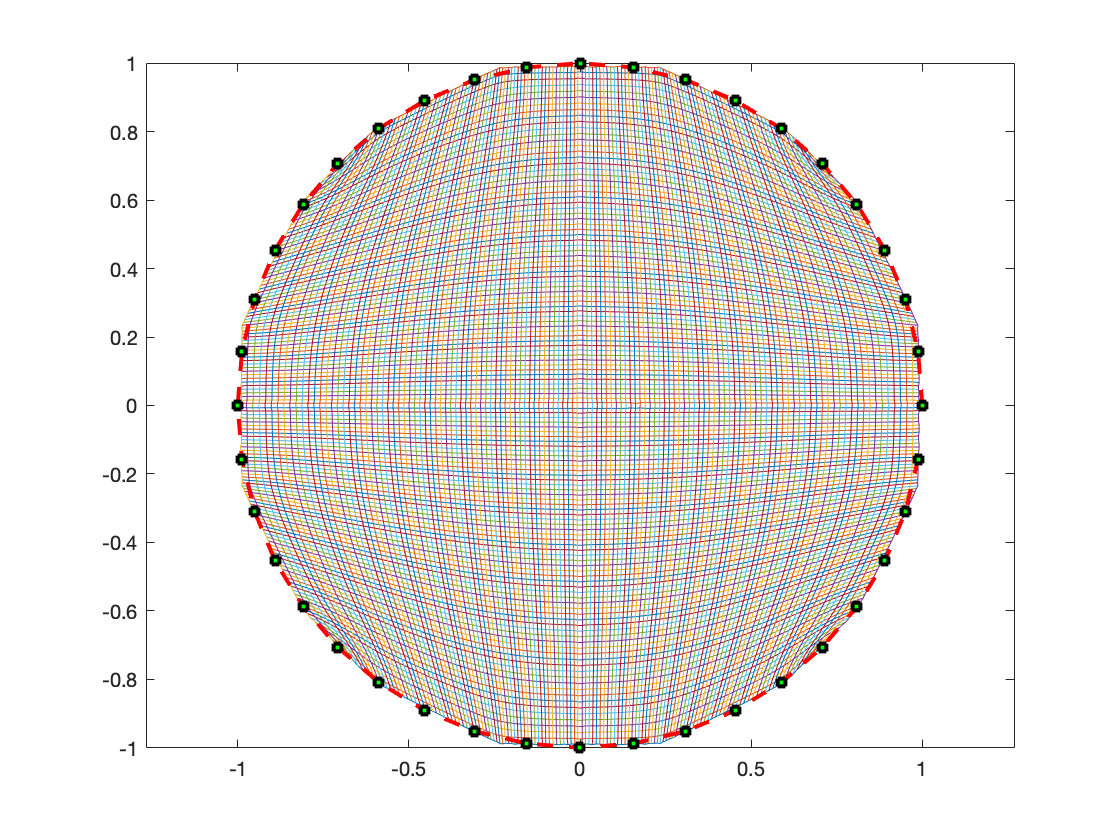}
\end{center}
\caption{Constant density on a square mapped to constant density on the unit disc $h=1/2^7$.}
\end{figure}

\begin{figure}[tbp]
\begin{center}
\includegraphics[angle=0, height=4.5cm]{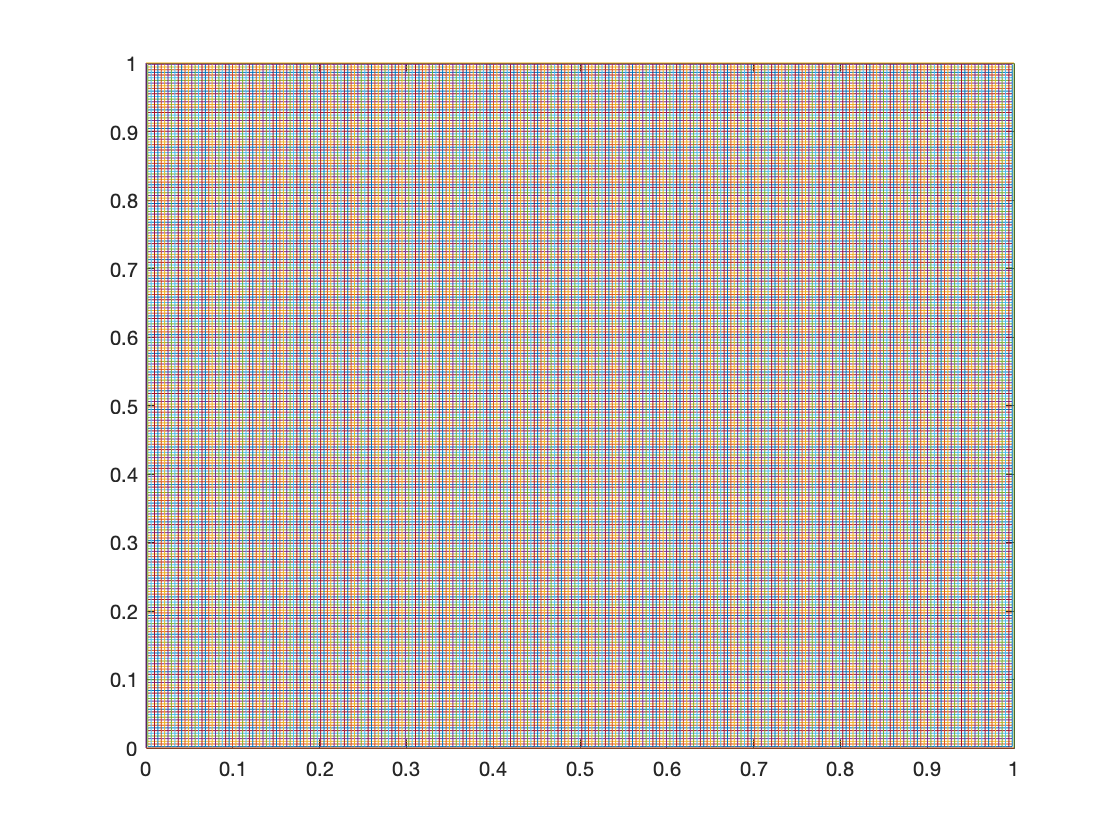}
\includegraphics[angle=0, height=4.5cm]{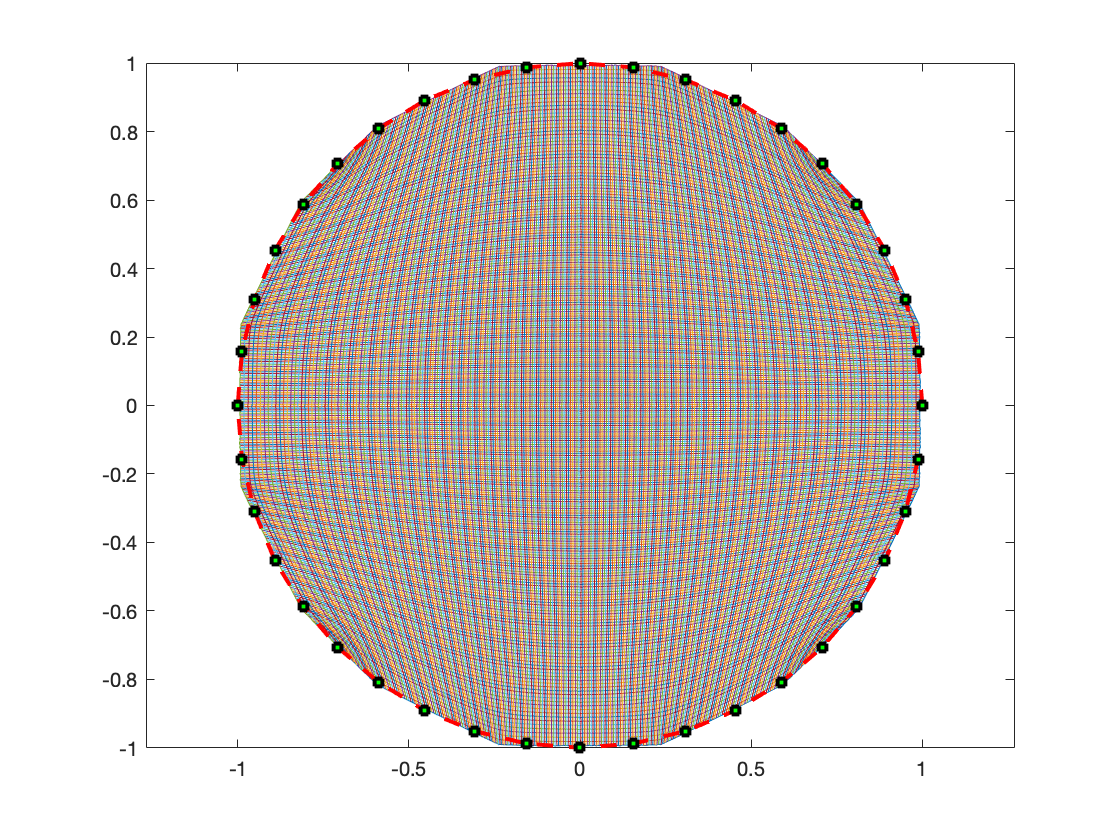}
\end{center}
\caption{Constant density on a square mapped to the Gaussian $e^{-0.5(x^2+y^2)}$ on the unit disc $h=1/2^8$.}
\end{figure}

\section{A review of polyhedral set theory}\label{comp-section}

The purpose of this section is to relate the notions introduced in section \ref{asym} to the standard polyhedral set theory. It may be skipped in a first reading. 

Any convex set which does not contain a line and consisting of the union of rays with the same common vertex is called a {\it convex cone}. The common vertex of all these rays is called the {\it vertex} of this convex cone. Formally

\begin{definition} \label{defn-cc}
A convex set $D \subset \R^{d+1}$ which does not contain a line is a convex cone with vertex $A$ if there is a subset $S$ of $\R^{d+1}$ such that
$D = \cup_{e \in S} L_{A,e}^+$.
\end{definition}

See Figures \ref{cone-fig} and \ref{pointed_cone-fig} for examples of convex cones.

\begin{lemma}
A convex set $D \subset \R^{d+1}$ which does not contain a line is a convex cone with vertex $A$, if and only if for $X \in D$, we have $A+\lambda \  \overrightarrow{A X} \in D$ for all $\lambda >0$.
\end{lemma}

\begin{proof} 
Assume that $D$ is a convex cone. 
Let $X \in D$ and $e \in \R^{d+1}$ such that $X \in L_{A,e}^+$. Let $\mu \geq 0$ such that 
$\overrightarrow{A X} = \mu e$, i.e. $X=A+ \mu e$. Then $B:=A+\lambda \  \overrightarrow{A X} = A + \lambda \mu e$
which means that $B \in L_{A,e}^+ \subset D$. 

Conversely, with $S=\{ \, e: e=  \overrightarrow{A X}, X \in D \, \}$, we have $D=\cup_{e \in S} L_{A,e}^+$. \Qed
\end{proof}

Let $M$ be a convex set which does not contain a line and let $A \in M$. The asymptotic cone $K_A(M)$ of $M$ is a convex cone. For another example, the epigraph of the function $k_{(p,\mu)}$ in \eqref{ex-cc}
is a convex cone in $\R^{d+1}$ with vertex $(p,\mu)$ (it is equal to its asymptotic cone by Lemma \ref{ex-as-cone}).

\begin{lemma} \label{onlyonev}
A convex cone has only one vertex.

\end{lemma} 

\begin{proof}
Assume that $D$ is a convex cone such that $D = \cup_{e \in S} L_{A,e}^+$ and $D = \cup_{e' \in S'} L_{B,e'}^+$ for subsets $S$ and $S'$ of $\R^{d+1}$ and vertices $A$ and $B$. Let $e' \in S'$ and $\mu \geq 0$ such that 
$A=B+\mu e'$. Let also $e \in S$ and $\lambda \geq 0$ such that $B=A + \lambda e$. We have $\lambda e + \mu e'=0$. If $\mu=0$ or $\lambda=0$, $A=B$. Otherwise $e'=-\lambda/\mu \, e$ and by assumption $ L_{B,e'}^+ \subset D$. But $ L_{B,e'}^+=  L_{B, \mu/\lambda \, e'}^+ =  L_{B,-e'}^+$. Thus $D$ contains the line with direction $e'$. Recall that by Definition \ref{defn-cc} a convex cone does not contain a lime. 
Contradiction. \Qed
\end{proof}

Let $D$ be a convex cone with vertex $A$ and put $D=A+K$ where $K$ is a convex cone with vertex at the origin. 
The condition $K \cap -K = \{ \, O \, \}$ is equivalent to requiring that $K$ does not contain a line. 
A convex cone as defined above is also refereed to as pointed convex cone \cite[p. 2]{Aliprantis}. In other words, the convex cone $A+K$ is pointed in the sense that $K \cap -K = \{ \, O \, \}$. We restrict to this class of convex cones because of the applications considered. We are interested in convex functions on $\R^{d}$ whose graphs form the boundary of the Minkowski sum of a convex cone and the convex hull of a set of points. 
Note that the epigraph of such a convex function do not contain a line. 
See Figure \ref{cone-fig} for the graph of a piecewise linear convex function which is the boundary of the Minkowski sum of a convex cone and the convex hull of a set of points.

Following \cite{Bakelman1994}, the points $X_0, X_1, \ldots, X_k$ are in general position if the vectors $\overrightarrow{X_0 X_1}, \ldots, \overrightarrow{X_0 X_k}$ are linearly independent. 
The points $X_0, X_1, \ldots, X_k$ are thus necessarily distinct. If $k>d$  they cannot be in general position.

We shall say that a set $S \subset \R^{d+1}$ is $k$-dimensional $0 \leq k \leq d+1$ if it contains $k+1$ points in general position but does not contain $k+2$ points in general position. 
A hyperplane in $\R^{d+1}$ is a $d$-dimensional set of the form 
$\{ \, x:  x \in \R^{d+1},  a^* \cdot x = b^*_< \, \}$ for $a^* \in \R^{d+1}, a^* \neq 0$ and $b^*_< \in \R$. By a closed half-space in $\R^{d+1}$, we mean a set of the form $\{ \, x:  x \in \R^{d+1},  a^* \cdot x \geq b^*_< \, \}$ for $a^* \in \R^{d+1}, a^* \neq 0$ and $b^*_< \in \R$.

A $k$-convex polyhedron $P$ is a $k$-dimensional set which is the intersection of a finite number of closed half-spaces, 
$$
P= \{ \, x: x \in \R^{d+1}, A^* x \geq b^*\, \},
$$
where $A^*$ is a $m \times(d+1)$ matrix and $b^* \in  \R^{d+1}$. 

The hyperplane $F=\{ \, x:  x \in \R^{d+1},  a^* \cdot x = b^*_< \, \}$ is a supporting hyperplane to the convex polyhedron $P$ if $P \subset \{ \, x:  x \in \R^{d+1},  a^* \cdot x \geq b^*_< \, \}$, i.e. $P$ is contained in (one of) the closed half-space with boundary $F$, and $F$ contains one or more points of $P$. 

A face of a convex polyhedron $P$ is a non-empty intersection of $P$ with one or more supporting hyperplanes. If a face of $P$ has dimension $k$, i.e. it is a $k$-dimensional set, it is called a $k$-face. The $0$-faces and 1-faces of $P$ are called vertices and edges of $P$ if they exist. 

A polyhedral angle, also called pointed polyhedral cone using the terminology of \cite{auslender2006asymptotic}, is a convex cone which is a convex polyhedron. Recall that by our convention a convex cone does not contain a line and hence has only one vertex by Lemma \ref{onlyonev}. A polyhedral angle can be written as $A+K$ where $A \in \R^{d+1}$ and 
$$
K = \{ \, x:  x \in \R^{d+1}, A^* x \geq 0\, \},
$$
for a  $m \times(d+1)$ matrix $A^*$ of rank $d+1$. If we let $a^*_i, i=1,\ldots,m$ denote the rows of $A^*$, the rank condition ensures that the origin is the only point in the intersection of the half-spaces $\{ \,x: x \in \R^{d+1}, a^*_i \cdot x \geq 0, i=1,\ldots,m \, \}$. This implies that the polyhedral angle has only one vertex $A$. See also \cite[Proposition 4.29]{paffenholz2010polyhedral}.

We now state some results of basic polyhedral theory c.f. for example \cite{paffenholz2010polyhedral}. The particular results used in this paper (Lemma \ref{conv-hull-union}  and Theorem  \ref{as-poly} ) were proved above. 

The asymptotic cone of an unbounded convex polyhedron which does not contain a line is a polyhedral angle, i.e. if $P$ is unbounded of the form $P=\{ \, x: x \in \R^{d+1}, A^* x \geq b^*\, \}$ with $A^*$ of rank $d+1$, then $P$ has asymptotic cone $A+K$ where $A \in P$ and $K=\{ \, x:  x \in \R^{d+1}, A^* x \geq 0\, \}$. The set $K$ is also known as recession cone or characteristic cone of $P$ \cite[Proposition 2.15]{paffenholz2010polyhedral}. In fact $P=S+K$ where $S$ is the convex hull of a finite number of points  \cite[Theorem 2.8 and Proposition 2.15]{paffenholz2010polyhedral}.

An extreme ray of a polyhedron $P$ is a ray which is a face of $P$. 
Klee, \cite{Klee} or \cite[Theorem 3.6.14]{Stoer}, proved that a polyhedron which does not contain a line is the convex hull of its vertices and its extreme rays. See also \cite[Theorem 1.4.3]{Schneider14}. The above decomposition $P=S+K$ of a line free polyhedron also follows  \cite[Corollary 1.4.4]{Schneider14}, using the observation that a point on an extreme ray is the sum of a vertex of $P$ and and an element of its recession cone $K$. 
A similar result is the following theorem by Bakelman who gave a simple geometric proof.

\begin{theorem}\cite[Theorem 4.2]{Bakelman1994} \label{4.2}
Every unbounded convex polyhedron which does not contain a line is the convex hull of its vertices and its asymptotic convex polyhedral angle, which is placed at one of its vertices.
\end{theorem}

In this paper we are interested in a particular kind of polyhedral angle. Let us illustrate how Lemma \ref{ex-as-cone} follows from polyhedral theory. 

Let $Y \subset \R^d$ be a $d$-convex polygon with vertices $a_1^*, a_2^*, \ldots, a_{N^*}^*$. This implies that 
$\{ \, a_1^*, a_2^*, \ldots, a_{N^*}^* \, \}$ is $d$-dimensional, i.e. it contains $d+1$ vectors in general position. Thus the matrix with columns $a_i^*-a_1^*, i=2,\ldots,N^*$ has rank $d$. It follows that the $N^* \times (d+1)$ matrix $A^*$ with rows
$\begin{pmatrix}(a_i^*)^T & -1 \end{pmatrix}$ has rank $d+1$. For the purpose of matrix multiplication, elements of $\R^d$ are column vectors. For simplicity below, if no matrix multiplication is involved, an element of $\R^d$ is a $d$-tuple.

The graph of the linear function $x \mapsto a_i^* \cdot x$ on $\R^d$, $\{ \, (x,x_{d+1}): (x,x_{d+1}) \in \R^d \times \R, 
 x_{d+1}=a_i^* \cdot x\, \}$ is a hyperplane of the form 
  $\{ \, (x,x_{d+1}): (x,x_{d+1}) \in \R^d \times \R, 
 (x,x_{d+1}) \cdot (a_i^*,-1) = 0  \, \}$. The closed half-space $\{ \, (x,x_{d+1}): (x,x_{d+1}) \in \R^d \times \R, 
 (x,x_{d+1}) \cdot (a_i^*,-1) \geq  0  \, \}$ is the epigraph of the linear function $x_{d+1}=a_i^* \cdot x$. 
 
 The convex cone $K=K_{(0,0)}$ introduced above and associated with the polygon $Y$ is the convex  cone 
 $\{ \, 
y= \begin{pmatrix}(x)^T & x_{d+1} \end{pmatrix}^T:  y \in \R^d \times \R, A^*y \geq 0
 \, \}$. It is equal to its recession cone. Thus the epigraph of $k_{(0,0)}$ is a convex cone equal to its asymptotic cone. 
 
 Lemma \ref{conv-hull-union} is just a special case of Bakelman's theorem, Theorem \ref{4.2}. To see this, recall that $S$ is the convex hull of a finite number of points. One first establishes that $P:=S+K$ is a polyhedron and hence has recession cone $K$, i.e. asymptotic cone $A+K$ for a vertex $A$ of $P$. By Theorem \ref{4.2}, $P$ is the convex hull of its vertices (the vertices of $S$) and $A+K$.
 
 A convex cone $D \subset \R^{d+1}$ is said to be finitely generated if there is a $(d+1) \times m$ matrix $B$ such that 
 $D=\{ \, B \lambda, \lambda \in \R^m, \lambda \geq 0\, \}$. By Minkowski's theorem \cite[Theorem 1.13]{paffenholz2010polyhedral}, the polyhedral cone $K = \{ \, x:  x \in \R^{d+1}, A^* x \geq 0\, \}$ is finitely generated. Thus $S+K$ is a polyhedron since $K$ is finitely generated, c.f. for example \cite[Theorem 2.8]{paffenholz2010polyhedral}.  It follows that $P:=S+K$ has recession cone $K$ and hence asymptotic cone $A+K$ for any element $A$ of $S$. 
 
 For Theorem \ref{as-poly}, by Lemma \ref{conv-hull-union}, the closure of the set $M$ is given by $S+K$ and hence has recession cone $K$.

\section{Appendix} \label{appendix}

We gave a geometric proof for Theorem \ref{formula-0} based on Lemma \ref{conv-hull-union}. Here we give an analytical proof based on infimal convolution. The epigraph of the infimal convolution illustrates with an analytical argument Lemma \ref{conv-hull-union}. 

Let $v$ be a continuous convex function on a closed convex set $\widetilde{S}$ with non empty interior. Let $S$ denote the epigraph of $v$. Here $S$ is unbounded unlike in Lemma 3. Let us consider another extension of $v$ to $\R^d$ as an extended value function
$$
v_{\infty}(x) =  \left \{ \begin{array}{ll}
v(x) & \text{ if } x \in \widetilde{S}\\
+\infty & \text{ otherwise }. 
\end{array}
\right.
$$
Recall the function $k_{\Omega^*}$ from \eqref{cone-def-cvx}. The infimal convolution of $v_{\infty}$ and $k_{\Omega^*}$ is a function $v_{\infty} \msquare k_{\Omega^*}: \R^d \to \R^d \cup \{ \, +\infty \, \}$ defined as
$$
v_{\infty} \msquare k_{\Omega^*}(x) = \inf_{y \in \R^d} v_{\infty}(y) + k_{\Omega^*} (x-y). 
$$
Since $ v_{\infty}(y) = +\infty$ for $y \notin \widetilde{S}$, we have 
$$
v_{\infty} \msquare k_{\Omega^*}(x) = \inf_{y \in \widetilde{S} } v(y) + k_{\Omega^*} (x-y). 
$$
Let $\epi u$ denotes the epigraph of a function $u$. Note that $\epi v = \epi v_{\infty}$ as $+\infty \notin \R$. 
For given functions $\phi_1$ and $\phi_2$ from $\R^d$ to $\R^d \cup \{ \, +\infty \, \}$ we have 
$\epi \phi_1 + \epi \phi_2 \subset \epi \phi_1 \msquare \phi_2$. The infimal convolution is said to be exact at $x \in \R^d$ if there exists $y \in \R^d$ such that $\phi_1 \msquare \phi_2(x) = \phi_1(y) + \phi_2(x-y)$. If $\phi_1 \msquare \phi_2$ is exact at all $x \in \R^d$, $\epi \phi_1 + \epi \phi_2 = \epi \phi_1 \msquare \phi_2$, \cite[Lemma 2.8]{ferrera2013introduction}. 

Given $x \in \R^d$, the function $y \mapsto v(y) + k_{\Omega^*}  (x-y)$ is continuous on $\widetilde{S}$ and hence has a minimum on $\widetilde{S}$. Thus $v_{\infty} \msquare k_{\Omega^*} $ is exact at all points $x \in \R^d$ and we conclude that
$$
\epi v_{\infty} \msquare k_{\Omega^*} = \epi v + \epi k_{\Omega^*},
$$
i.e. $M=S+K_{\Omega^*}$ where $M=\epi v_{\infty} \msquare k_{\Omega^*}$. This is essentially the content of Lemma \ref{conv-hull-union}. 

\begin{theorem}
A necessary and sufficient condition for $v_{\infty} \msquare k_{\Omega^*}$ to be a convex extension of $v$ is that
$\partial v((\widetilde{S})^\circ) \subset \tir{\Omega^*}$.
\end{theorem}

\begin{proof}
Recall that a function  $\phi$ defined on $\R^d$ is proper if there exists $x_0 \in \R^d$ such that $\phi(x_0)< +\infty$ and $\phi(x) > - \infty$ for all $x \in \R^d$. As $v_{\infty}$ and $k_{\Omega^*}$ are proper convex functions, $v_{\infty} \msquare k_{\Omega^*}$ is a convex function by \cite[Proposition 2.56]{dhara2011optimality}. 

Recall that $\partial k_{\Omega^*}(\R^d) = \tir{\Omega^*}$. Let us first assume that $v_{\infty} \msquare k_{\Omega^*}=v$ on $\widetilde{S}$. Then for all $x \in (\widetilde{S})^\circ$, $\partial v(x) = \partial v_{\infty} \msquare k_{\Omega^*} (x)$.  This follows from the locality of the subdifferential c.f. \cite[Exercise 1]{Guti'errezExo}. 

By \cite[Proposition 16.48 (i) ]{Bauschke2011}, we have for $x \in (\widetilde{S})^\circ$, $\partial v(x) = \partial v_{\infty} \msquare k_{\Omega^*} (x) = \partial v_{\infty}(y) \cap \partial  k_{\Omega^*} (x-y)$, where $y \in \widetilde{S}$ with $v_{\infty} \msquare k_{\Omega^*} (x) =  v_{\infty}(y) + k_{\Omega^*} (x-y)$. Here $y=x$ and $\partial k_{\Omega^*} (0) =  \tir{\Omega^*}$. We conclude that $\partial v((\widetilde{S})^\circ) \subset  \tir{\Omega^*}$.

Let us now assume that $\partial v((\widetilde{S})^\circ) \subset  \tir{\Omega^*}$. We show that $v_{\infty} \msquare k_{\Omega^*}$ is a convex extension of $v$.
Let $x \in (\widetilde{S})^\circ$. We have $v_{\infty} \msquare k_{\Omega^*}(x)\leq v(x)$. Assume by contradiction that $v_{\infty} \msquare k_{\Omega^*}(x) < v(x)$. This means that we can find $y \in \widetilde{S}$ such that 
\begin{equation} \label{y-contra}
v(y)+k_{\Omega^*}(x-y) < v(x).
\end{equation} 
Let now $p \in \partial v(x)$. We have $p \in \tir{\Omega^*}$. By definition, $v(y) \geq v(x) + p (y-x)$. Thus, by \eqref{y-contra}
\begin{align*}
v(y) > v(y)+k_{\Omega^*}(x-y)  + p \cdot (y-x).
\end{align*} 
It follows that $p \cdot (x-y) > k_{\Omega^*}(x-y) =  \sup_{p \in \tir{\Omega^*}} p \cdot (x-y)$
This contradicts $p \in \tir{\Omega^*}$.  We conclude that $v= v_{\infty} \msquare k_{\Omega^*}$ on $(\widetilde{S})^\circ$. Recall that $v$ is continuous on $\widetilde{S}$. Also, $v_{\infty} \msquare k_{\Omega^*}$ is a proper convex function which is bounded above on $\widetilde{S}$, and hence continuous on $\widetilde{S}$, c.f. \cite[Lemma 2]{awanou2019uweakcvg}. It follows that $v_{\infty} \msquare k_{\Omega^*} = v$ on $\widetilde{S}$. \Qed
\end{proof}

\section*{Declarations}

Funding: The author was partially supported by NSF grant  DMS-1720276. The author would like to thank the Isaac Newton Institute for Mathematical Sciences, Cambridge, for support and hospitality during the programme ''Geometry, compatibility and structure preservation in computational differential equations'' where part of this work was undertaken. Part of this work was supported by EPSRC grant no EP/K032208/1.

The author would like to thank the unknown referees, Carmel Aboua, Julienne Kabre and Nicolae Tarfulea  for a careful reading of the manuscript and for constructive comments which substantially helped improve the quality of the paper. 

\noindent
Conflicts of interest/Competing interests: No 

\noindent
Availability of data and material (data transparency): Available upon reasonable request

\noindent
Code availability (software application or custom code): Available upon reasonable request

\noindent
Authors' contributions: N/A


\end{document}